\definecolor{darkred}{rgb}{0.5,0,0}
\definecolor{darkgreen}{rgb}{0,0.5,0}
\definecolor{darkblue}{rgb}{0,0,0.5}
\renewcommand\p@enumii{}
\newtheorem{theorem}{Theorem}[section]
\newtheorem{assumption}[theorem]{Assumption}
\newtheorem{proposition}[theorem]{Proposition}
\newtheorem{lemma}[theorem]{Lemma}
\theoremstyle{definition}
\newtheorem{definition}[theorem]{Definition}
\theoremstyle{remark}
\newtheorem{remark}[theorem]{Remark}
\newtheorem{example}[theorem]{Example}
\newcounter{notes}
{\end{list}}
\newcommand\A{\mathcal{A}}
\newcommand\mC{\mathcal{C}}
\newcommand{\K}{\mathcal{K}}
\newcommand{\V}{\mathcal{V}}
\newcommand{\U}{\mathcal{U}}
\newcommand{\F}{\mathcal{F}}
\newcommand{\R}{\mathbb{R}}
\newcommand{\C}{\mathbb{C}}
\newcommand{\Z}{\mathbb{Z}}
\newcommand{\Q}{\mathbb{Q}}
\newcommand\G{\mathcal{G}}
\renewcommand\H{\mathcal{H}}
\newcommand{\ddt}{\frac{d}{dt}}
\newcommand{\pps}{\frac{\partial}{\partial s}}
\newcommand{\ppnu}{\frac{\partial}{\partial \nu}}
\renewcommand{\P}{\mathbb{P}}
\newcommand{\tk}{\tilde k}
\newcommand{\tu}{\tilde u}
\newcommand{\tU}{\tilde U}
\newcommand{\tA}{\tilde A}
\newcommand{\tSig}{\tilde \Sigma}
\newcommand{\dvol}{\on{dvol}}
\newcommand{\vol}{\on{vol}}
\newcommand{\Euc}{\on{Euc}}
\newcommand\lie[1]{\mathfrak{#1}}
\renewcommand{\k}{\lie{k}}
\newcommand{\g}{\lie{g}}
\newcommand{\on}{\operatorname}
\newcommand{\Ad}{ \on{Ad} }
\newcommand\Vect{\on{Vect}}
\newcommand\Hom{\on{Hom}}
\newcommand{\im}{ \on{Im}}
\newcommand\Id{\on{Id}}
\newcommand\onD{\on{D}}
\newcommand{\hra}{\hookrightarrow}
\newcommand{\weakto}{\rightharpoonup}
\renewcommand{\d}{{\on{d}}}
\newcommand{\loc}{{\on{loc}}}
\newcommand{\orb}{{\on{orb}}}
\newcommand{\ol}{\overline}
\newcommand{\delbar}{\ol{\partial}}
\newcommand\bs{\backslash}
\newcommand\Sig{\Sigma}
\newcommand\sig{\sigma}
\newcommand\eps{\epsilon}
\newcommand\Om{\Omega}
\newcommand\om{\omega}
\newcommand{\lan}{\langle}
\newcommand{\ran}{\rangle}
\newcommand{\hh}{{\frac{1}{2}}}
\newcommand{\tqq}{{\frac{3}{4}}}
\newcommand{\fix}{n}
\renewcommand{\ss}{{\operatorname{ss}}}
\newcommand{\dual}{\vee}
\newcommand\Mod[1]{\lVert #1 \rVert}
\newcommand\qu{/\kern-.7ex/} 
\newcommand\mO{\mathcal{O}}
\begin{document}
\author{S. Venugopalan}

 \author{C. Woodward}

\subjclass[2010]{53D06}

\title{Classification of affine vortices}

\begin{abstract}
We prove a Hitchin-Kobayashi correspondence for vortices on the
complex affine line with K\"ahler target, which generalizes a result
of Taubes \cite{taubes1980}, see also \cite{JT}, for the case of a
line target.  More precisely, suppose a compact connected Lie group
$K$ acts on a K\"ahler manifold $X$ with proper moment map so that
stable=semistable for the action of the complexified Lie group $G$ and
$X$ is equivariantly convex at infinity. Then, for some sufficiently
divisible integer $\fix$, there is a bijection between gauge
equivalence classes of $K$-vortices with target $X$ modulo gauge and
isomorphism classes of maps from the weighted projective line
$\P(1,\fix)$ to $X/G$ that map the stacky point at infinity $\P(\fix)$
to the semistable locus of $X$.  The results allow the construction
and partial computation of the quantum Kirwan map in Woodward
(\cite{W:qkirwan1}, \cite{W:qkirwan2} and \cite{W:qkirwan3}), and play a role in the conjectures of Dimofte,
Gukov, and Hollands \cite{dimofte:vortex} relating vortex counts to
knot invariants.
\end{abstract}

\maketitle

\tableofcontents 

\parskip .05in

\section{Introduction}\label{sec:intro}

A {\em vortex} in geometric analysis is a pair consisting of a
connection and bundle section which satisfies an equation involving
the curvature and a non-linear (usual quadratic) term depending on the
section, as well as a first order equation for the section.  Vortices
arise naturally in several areas of mathematical physics and geometry;
our interest in them arises from their interpretation as equivariant
generalizations of pseudoholomorphic maps.  Our goal in this paper is
to classify vortices, in the special case that the domain is the
complex line.  In this setting, Taubes \cite{taubes1980} 
and Jaffe-Taubes \cite{JT} provide a classification in the first
non-trivial case of finite energy vortices with circle group $K =U(1)$
and target space $X = \C$.  In this paper we generalize their
classification to arbitrary compact connected Lie groups $K$ and fiber
bundles whose fiber is a K\"ahler manifold $X$ with a Hamiltonian
$K$-action. Here $X$ is either compact or is equivariantly convex at
infinity with a proper moment map.  More precisely we classify {\em
  affine vortices} with target $X$, consisting of pairs $(A,u)$ where
$A$ is a connection on the principal bundle $P=\C\times K$, $u:\C \to
P(X) := (P \times X)/K $ is a holomorphic section with respect to the
complex structure defined by $A$ and $(A,u)$ satisfies the vortex
equation:
\begin{equation} \label{ve} *F_A+\Phi(u)=0. \end{equation}
Here $F_A$ is the curvature of $A$, $\Phi: X \to \k^\dual\simeq \k$ is
the moment map on $X$ for the $K$-action, and $*$ is the Hodge star
for the standard metric on $\C$.

In the special case of the action of the circle group on the complex
line, Taubes \cite[Theorem 1]{taubes1980}) and Jaffe-Taubes
\cite[Section III, Theorem 1]{JT} show that the gauge-equivalence
class of a finite energy vortex is completely determined by the zeros
of the section. It follows that the moduli space of gauge equivalence
classes of finite energy vortices is a symmetric product. This gives a
description of the space of vortices with target $\C$ in terms of
holomorphic data which from a more modern perspective may be viewed as
a {\em Hitchin-Kobayashi correspondence}.  The first example of such a
correspondence by Narasimhan and Seshadri \cite{NS} shows that stable
holomorphic bundles over a Riemann surface correspond to irreducible
unitary representations of the fundamental group.  Donaldson
\cite{Do:NS} reproves the Narasimhan-Seshadri theorem in a
differential-geometric setting, replacing irreducible unitary
representations by an equivalent object - the minima of the Yang-Mills
functional.  The extension of this result to higher dimensional base
manifolds involved replacing a Yang-Mills connection by a
Hermitian-Einstein connection. The Hitchin-Kobayashi correspondence in
this case states that a holomorphic vector bundle admits a
Hermitian-Einstein connection if and only if it is stable. This was
proved for compact K\"ahler surfaces in \cite{Do:ASD} and for general
compact K\"ahler manifolds in \cite{UY}.

In the case of vortices the corresponding holomorphic objects are
holomorphic bundles over compact K\"ahler manifolds with additional
data, for example a prescribed holomorphic section.  Vortices are the
zeros of the {\em vortex functional} given by the norm-square of the
left hand side of \eqref{ve}. Bradlow's paper
\cite{Brad:stability} defines a stability condition for such objects
and relates it to the existence of zeros of the vortex functional.
Results in Bradlow \cite{Brad:stability} are used to investigate the
moduli space of finite energy vortices in \cite{Brad:moduli1},
\cite{Brad:moduli2} and \cite{Brad:vortices}.  Garcia-Prada
\cite{oscar1}, \cite{oscar2}, \cite{oscar3} provides a different
approach to similar results via dimensional reduction, achieving in
some cases more general results.  Mundet \cite{Mund} generalizes these
results by allowing the fiber to be a K\"ahler Hamiltonian manifold.
All these Hitchin-Kobayashi results are infinite-dimensional versions
of the abstract setting laid out in Kempf-Ness \cite{KN} and Kirwan
\cite{Kirwan}, in which the main idea is that the symplectic quotient
coincides with the geometric invariant theory (git) quotient.

Our main result is a bijection between
finite energy affine vortices with bounded image and a moduli space of holomorphic
objects defined as follows: 
Let $G$ be a connected complex reductive group whose maximal compact subgroup is $K$. 
By a {\em $G$-gauged holomorphic map} we mean
a pair $(P_\C,u)$ where $P_\C \to \P(1,n)$ is a holomorphic principal
$G$-bundle over the weighted projective line $\P(1,n)$ and $u: \P(1,n) \to P_\C \times_G X$ is a section. Complex
gauge transformations are $G$-automorphisms of the underlying smooth
bundle; they act on $G$-gauged maps by pulling back the complex structure
and section. We say that a $G$-gauged holomorphic map is {\em stable} if
the point at infinity is mapped by $u$ to the semistable locus
$X^{\ss}$, i.e., $u(\infty)$ takes values
in $(P_\C \times_G X^{\ss})|_\infty$. The stability condition we consider here is surprisingly
simple because we want our holomorphic objects to correspond to
unitary objects on {\em affine} curves.  From a $G$-gauged holomorphic map
$(P_\C,u)$, we produce a unitary object by choosing a section $\sig:
\P(1,n) \to P_\C/K$. This reduces the structure group to the maximal
compact subgroup $K$, i.e. there is a principal $K$-bundle $P
\subseteq P_\C$ such that $P_\C=P \times_K G$.  The complex structure
on $P_\C$ corresponds to a choice of $K$-connection $A$.

\begin{theorem}{\rm (Classification of affine vortices)} \label{thm:premain}  
  Let $X,G,K$ be as above.  Suppose the $G$-action on $X^\ss$ has finite stabilizers and $\fix$ is a positive integer such that
  for any $x \in X^{\ss}$, the order of the stabilizer group $|G_x|$
  divides $\fix$.  For any stable $G$-gauged map $(P_\C,u)$, there exists
  a unique reduction of the structure group $\sig:\C \to P_\C/K$, so that if $A$ is the resulting connection, then $(A,u)|_\C$ is a symplectic vortex. The reduction $\sig$ extends over $\P(1,n)$ in
  the sense of Section \ref{sec:outside} below.  The map $[(P_\C,u)]
  \mapsto [(A,u)|_\C]$ defines a bijection between complex gauge
  equivalence classes of stable gauged holomorphic maps $(P_\C,u)$
  from $\P(1,\fix)$ to $X$ and gauge equivalence classes of 
  finite energy affine $K$-vortices with bounded image in $X$.
\end{theorem}

The moduli space of $G$-gauged holomorphic maps in the theorem has a
convenient stack-theoretic interpretation. The weighted projective
line is a Deligne-Mumford stack, or in other language, an orbifold. Recall that if $C$ is
a complex curve, a morphism from $C$ to the {\em quotient stack} $X/G$ consists of a principal $G$-bundle over $C$ together with a
section of the associated fiber bundle $P \times_G X$.  The quotient
stack $X/ G$ contains as a proper open substack the geometric
invariant theory quotient $X \qu G$, here defined as the
stack-theoretic quotient of the semistable locus by the action of
$G$. The same is true when $C$ is replaced by $\P(1,n)$ (see
\cite{Lerm:stack}). The moduli space of stable $G$-gauged holomorphic maps
is the coarse moduli space of the open substack of $\Hom(\P(1,n),X/G)$
that satisfies $u(\P(n)) \subset X\qu G$.  In the case that $G$ is a
torus acting on a finite dimensional complex vector space $X$, bundles
on $\P(1,\fix)$ and sections of the associated vector bundle can be
classified explicitly, see Proposition 1.2 below.

A recent paper of Xu \cite{Xu:vortex} complements our work. It proves
similar results as this paper for $U(1)$-vortices with fiber $X=\C^m$
using results of \cite{Brad:stability}. It also shows a correspondence
between compactifications of the space of affine vortices modulo gauge
on the one side and the space of gauged holomorphic maps over $\P^1$,
that are semistable at $\infty$.

The Hitchin-Kobayashi correspondence for affine vortices in Theorem
\ref{thm:premain}
is partly motivated by a certain quantization of the Kirwan map that
arises in the study of Gromov-Witten invariants of geometric invariant
theory quotients.  Namely Kirwan \cite{Kirwan} constructs a map from
the equivariant cohomology $H_G(X,\Q)$ to the cohomology of the
quotient $H(X \qu G, \Q)$.  A Gromov-Witten generalization of
\cite{Kirwan} called the quantum Kirwan map, suggested by Gaio-Salamon
\cite{GS}, maps the equivariant quantum cohomology of $X$ to the
quantum cohomology of the quotient $X \qu G$.  Salamon and Ziltener
\cite{Zilt:thesis} suggested to define the quantum Kirwan map by a
count of affine vortices.  The papers of the second author
\cite{W:morphism}, \cite{W:qkirwan1}, \cite{W:qkirwan2} and \cite{W:qkirwan3} generalize the manifolds for
which this map is defined, by removing monotonicity and asphericity
assumptions on $X$. Also, the group action can have finite stabilizers
on the zero-level set of the moment map, i.e. the symplectic quotient
can be an orbifold. This paper is part of that project. It provides an
algebraic description for the moduli space of vortices, and from
there, the quantum Kirwan map can be defined using Behrend-Fantechi
machinery. A very important result in this context is a
compactification for the space of affine vortices modulo gauge proved
by Ziltener (\cite{Zilt:thesis}, \cite{Zilt:QK}).

An additional recent motivation arises from {\em knot-invariants via
  vortex counting} conjectures of Dimofte, Gukov, and Hollands
\cite{dimofte:vortex}.  In these conjectures, the equivariant index
(defined via localization) of the moduli space of affine vortices is
conjectured to be a certain knot invariant.  Our results allow the
identification of the moduli space of affine vortices with the {\em
  quasimap} spaces discussed in, for example, Bertram,
Ciocan-Fontanine, and Kim \cite{be:tw}.  The space of matrix-valued
vortices in Example \ref{matrix} appears as the relevant space of
vortices for a torus knot in the vortex counting conjectures of
\cite{dimofte:vortex}.

The proof of the Hitchin-Kobayashi correspondence for affine vortices
uses a minimization technique for the vortex functional.  In
particular the first author proved in \cite{Venu} that the heat flow
of the functional
$$(A,u) \mapsto \Mod{*F_A + \Phi(u)}_{L^2(\Sig)}$$
exists for all time on any compact Riemann surface $\Sig$, possibly with
boundary.  Using this result, the vortex equation on an affine line
can be solved on a succession of annuli of increasing sizes that
exhaust the domain.  We prove that this sequence of vortices converges
to a solution on the whole plane. This process involves a number of
complications, such as proving that the sequence of solutions on the
balls converges without bubbling.

We give some examples of the main result in the case of torus actions
on vector spaces. By Theorem \ref{thm:main}, given a finite energy affine vortex $(A,u)$ with bounded image,
it corresponds to a gauged holomorphic map over a principal $K$-bundle $P
\to \P(1,n)$ in a sense defined in Section 3.1.  The characteristic class $[P]$ of $P$ in $H_2(BK,\Q)$
is a topological invariant of $(A,u)$ (see Remark
\ref{rem:topinv}). If $K$ is a torus then $H_2(BK,\Q)$ is isomorphic
to the subset $\{\lambda \in \k\,|\, \exists n \in \Z, \ e^{2\pi \fix
  \lambda}=\Id\}$ of rational elements $\k$ and we call the image of
$[P]$ in $\k$ the {\em degree} of the vortex. In the following
Proposition, we classify affine vortices of fixed degree.

\begin{proposition}{\rm (Classification of affine vortices in the
    toric case)} \label{prop:toric} Suppose that $G$ is a complex
  torus acting on a complex vector space $X=\C^k$ with weights
  $\nu_1,\ldots,\nu_k \in \g^\dual$ contained in an open half space,
  and spanning $\g^\dual$.  Then, under the bijection in Theorem
  \ref{thm:premain} affine vortices of degree $d \in \k \subset \g$
  correspond to isomorphism classes of tuples of polynomial maps $u =
  (u_1,\ldots,u_k): \C \to X$ satisfying
  \begin{enumerate}
  \item the degree of $u_j$ is at most $\lan \nu_j,d \ran$ for each $j
    = 1,\ldots, k$; and
  \item if
$$ u(\infty) := \left( u_j(\infty) := \begin{cases} u_j^{ (\lan \nu_j, d \ran) }/ 
    \lan \nu_j , d \ran !  & \lan \nu_j, d \ran \in \Z \\
    0 & \text{otherwise} \end{cases} \right)_{j=1}^k $$
denotes the vector of leading order coefficients (here $u_j^{(\lan \nu_j,d\ran)}$ denotes the $ \lan \nu_j, d
\ran$-th derivative of $u_j$) with integer exponents, then $u(\infty) \in
X^{\ss}$.
\end{enumerate}
Two such tuples are isomorphic if they are related by the action of
$G$.
\end{proposition}
We defer the proof to the end of Section \ref{sec:last}.

\begin{example} \label{matrix}
  \begin{enumerate}
  \item
    {\rm (Linear vortices)} If $X = \C$ with $\nu_1 = 1$, then affine
    vortices of class $d$ correspond to polynomials of degree exactly
    $d$ up to the action of scalar multiplication, hence classified by
    their zeroes.  This recovers the Taubes \cite{taubes1980},
    \cite{JT} result.
  \item {\rm (Matrix-valued vortices and Weil's torsion quotients)} If
    $X = M_n(\C)$, the space of $n\times n$ matrices, and $G = GL_n$
    acts by left multiplication, then the semistable locus consists of
    invertible matrices and the action on the semistable locus is
    free. Theorem \ref{thm:main} gives a classification of vortices
    according to the following data: By Grothendieck's theorem
    \cite{gr:cl}, any holomorphic vector bundle on $\P^1$ splits as a
    sum of line bundles
$$ P \times_{GL_n(\C)} \C^n \cong \mO_{\P^1}(\lambda_1) \oplus \ldots \oplus
\mO_{\P^1}(\lambda_n), \quad \lambda_1 \ge \ldots \ge \lambda_n $$
where $\mO_{\P^1}(\lambda)$ is the $\lambda$-th tensor power of the
hyperplane bundle.  The associated $X$ bundle is then
$P(X) = \mO_{\P^1}(\lambda_1)^{\oplus n} \oplus \ldots \oplus
\mO_{\P^1}(\lambda_n)^{\oplus n} .$
A section $u$ of $P(X)$ may be viewed as a matrix-valued function on
$\C$.  The semistability condition at infinity is then the condition
that the leading order terms of $u$ form an invertible matrix.  Thus
$u$ defines a morphism of sheaves which is generically an isomorphism,
providing a connection to Weil's scheme of {\em torsion quotients}
considered in \cite{be:tw}, \cite{Biswas:mero}, which \cite{be:tw}
mentions was considered by Weil to be the non-abelian analog of the
symmetric product of the curve.
\end{enumerate}
\end{example}

We briefly sketch the contents of the paper.  Section
\ref{sec:background} defines gauged holomorphic maps and extends the
definitions to the case when the base manifold $\Sig$ is an orbifold
$\P(1,\fix)$. Section \ref{sec:outside} gives an analytic version of
the main theorem and proves the first part of the main theorem
\ref{thm:main}. Section \ref{sec:last} proves the second part. The
proof relies on removal of singularities at infinity for finite energy
vortices with bounded image which is proved in Section
\ref{sec:orbdecay}.

{\bf Acknowledgments:} We thank Fabian Ziltener for useful
discussions. 
S.V. was a post-doctoral fellow at Tata Institute of Fundamental Research, Bombay at the time this paper was written. C.W. was partially supported by NSF grant DMS 1207194 and a Simons Fellowship. We also acknowledge HIM, Bonn for hosting us for a week, when some of the work on this paper was carried out. Finally, we profusely thank the referee for reading the paper carefully and providing suggestions that improved the paper.
\section{Background}\label{sec:background}

In this section we introduce basic notation for gauged holomorphic
maps and vortices, especially gauged holomorphic maps on weighted
projective lines.

\subsection{Hamiltonian manifolds}

We introduce the following notation for group actions.
Let $G$ be a complex reductive Lie group with maximal compact subgroup
$K$, so that $G$ is the complexification of $K$. Let $(X,\om,J)$ be a
K\"{a}hler manifold with symplectic structure $\om$ and complex
structure $J$ on which $G$ acts holomorphically and $K$ acts
symplectically.  A {\em moment map} is a $K$-equivariant map $\Phi: X
\to \k^\dual$ such that
\begin{equation}\label{eq:mommap}
\iota(\xi_X) \omega = \d \lan \Phi, \xi \ran , \ \forall \xi \in \k ,
\end{equation}
where
$\xi_X(x)=\ddt \exp(t\xi)x|_{t=0} \in \Vect(X)$ is given by the infinitesimal action of $\xi$ on $X$.
The action of $K$ is {\em Hamiltonian} if there exists a
moment map $\Phi : X \to \k^\dual$. Since $K$ is compact, $\k$ has an
$\Ad$-invariant metric. We fix such a metric and identify $\k \simeq
\k^\dual$, so the moment map becomes a map $\Phi : X \to \k$.  We
assume $X$ is equipped with a Hamiltonian action and fix the moment
map.  In the rest of the paper, we assume:
\begin{assumption} \label{ass:freeaction} The 
  $G$-action on $G\Phi^{-1}(0)$ has finite stabilizers.
\end{assumption}
If the underlying complex structure of $X$ has the structure of a polarized projective or affine $G$-variety, the
git quotient is defined by $X \qu G :=
X^{\ss}/\sim$ where $X^{\ss}$ is the semi-stable locus and $\sim$ is
the orbit closure relation. For details of the construction, refer to
\cite{Mumford} for the projective case and \cite{King} for the affine
case.  A polarization of $X$ determines a moment map. By the Kempf-Ness \cite{KN} theorem and its generalization
to affine varieties (\cite[Theorem 6.1]{King}), the 
quotient $X \qu G$ is homeomorphic to the symplectic quotient
$\Phi^{-1}(0)/K$. 
Assumption \ref{ass:freeaction} guarantees that the $K$ action on
$\Phi^{-1}(0)$ has finite stabilizers, and that
$X^{\ss}=G\Phi^{-1}(0)$. In case $X$ is a K\"ahler manifold without an algebraic structure, we define the semistable locus as $X^\ss :=G\Phi^{-1}(0)$.

\subsection{Connections and curvature}

We begin with basic notions of connections, curvature, and their
behavior under gauge transformations. Let $\Sig$ be a Riemann surface,
equipped with a complex structure $j_\Sig:T\Sig \to T\Sig$ and a
volume form $\om_\Sig \in \Om^2(\Sig)$ that induces a metric. Let $P
\to \Sig$ be a principal $K$-bundle. A {\em connection} is a
$K$-equivariant one-form $A \in \Om^1(P,\k)^K$, that satisfies
$A(\xi_P)=\xi$ for $\xi \in \k$. The {\em space of connections}
$\A(P)$ is an affine space modelled on $\Om^1(\Sig, P(\k))$, where
$P(\k)=P \times_K \k$ is the {\em adjoint bundle}. In case $P$ is the
trivial bundle $\Sig \times K$, there is a trivial connection $\onD$,
and the adjoint bundle has a trivialization $P(\k) \simeq \Sig \times
\k$. Then, the space of connections is
$$\A(P)=\onD +\, \Om^1(\Sig, \k).$$ 
The {\em curvature} of a connection $A$ is a
 two-form $F_A \in \Om^2(\Sig, P(\k))$. In particular, on a trivial
 bundle, for a connection $A=\onD+a$,
$$F_A:= \d a + [a \wedge a]/2 \in \Om^2(\Sig,\k).$$
The curvature varies with the connection as
\begin{equation}
\label{eq:fdepa}
F_{A+ta}=F_A+t \d_A a+\frac{t^2}2[a \wedge a]
\end{equation}
where $\d_A$ is the associated covariant derivative on the adjoint
bundle. A {\it gauge transformation} is an automorphism of $P$ that is an
equivariant bundle map $P \to  P$. Alternatively, it is a section of the
bundle $P \times_K K \to  \Sig$, where $K$ acts on itself by
conjugation. The group of gauge transformations on $P$ is denoted
$\K(P)$. On the trivial bundle $\Sig \times K$, an element in $\K(P)$ is a map
from $\Sig$ to  $K$. An element $k \in \K(P)$ acts on a connection $A=\onD+a$ as
$$k(A)= \onD+(k \d k^{-1} + \Ad_ka).$$
Differentiating, we see that the infinitesimal action of $\xi:\Sig \to
\k$ on $A$ is $-\d_A\xi$.

\subsection{Complex gauge transformations}
We introduce notation for complex gauge transformations.  Associated
to the principal $K$-bundle $P \to \Sig$, we have a principal
$G$-bundle $P_\C:=P \times_K G$ on $\Sig$.  A {\em complex gauge
  transformation} is a $G$-equivariant bundle automorphism $P_\C \to
P_\C$. The group of complex gauge transformations is denoted by
$\G(P)$. It is equivalent to the space of sections $g:\Sig \to P \times_K G$, where
$K$ acts on $G$ by conjugation. Recall that
\begin{equation}\label{eq:complexgroupiso}
  K \times \k \to G, \quad 
  (k,s) \mapsto ke^{is}
\end{equation}
is a diffeomorphism (see Helgason
  \cite[VI.1.1]{Helg2}). So, a complex gauge transformation $g$ can be
written as $g=ke^{i\xi}$, where $k \in \K(P)$ and $\xi \in
\on{Lie}(\K(P)) = \Gamma(P(\k))$. We next explain the action of
$\G(P)$ on the space of connections $\A(P)$.

Unitary connections determine holomorphic structures on
fiber bundles as follows. For any $K$-manifold $X$ we have an associated fiber bundle $P(X)$ over
$\Sigma$, and the connection $A$ defines a connection on $P(X)$ and
hence a covariant derivative 
$$\d_A : \Gamma(P(X)) \to \bigcup_{u \in
  \Gamma(P(X))} \Om^1(\Sig, u^* T^{\on{vert}}(P(X))) ,$$ 
where $T^{\on{vert}}(P(X))$ is the vertical part of the tangent
bundle.  For example on the bundle $\Sig \times X$, writing
$A=\onD+a$,
\begin{equation}\label{eq:dau}
\d_Au := \d u + a_u \in \Om^1(\Sig, u^*TX).
\end{equation}
At a point $z \in \Sig$, $a_u(z)$ is the infinitesimal action of
$a(z)$ at $u(z)$.  If the fiber $X$ is also a complex manifold so that
the $K$-action is holomorphic, then one obtains a holomorphic
structure on the associated fiber bundle $P(X)$ by defining the the
$\delbar_A$ operator by
$$\delbar_A:\Gamma(\Sig,P(X)) \to \Om^{0,1}(\Sig, u^*T^{\on{vert}}P(X)), \quad u \mapsto (\d_Au)^{0,1}.$$
In particular, taking $X=G$ produces a holomorphic $G$-bundle $P_\C$. Let $\mC(P)$ denote the space of holomorphic structures on $P_\C$.
The above construction yields a map from $\A(P)$ to $\mC(P)$.
Conversely, given a holomorphic bundle $P_\C$, a choice of a section
$\sig: \Sig \to P_\C/K$ gives a principal $K$-bundle $P$ by pullback
of the bundle $P_\C \to P_\C/K$, so that $P$ is naturally a submanifold of
$P_\C$.  The intersection $TP \cap J(TP)$ defines a connection in $TP$
\cite{Singer}. The correspondence between connections and holomorphic structures
 gives an infinitesimal isomorphism
$$ T_A \A =\Om^1(\Sig, P(\k)) \to T_C\mC=\Om^{0,1}(\Sig, P(\g)), \quad a \mapsto a^{0,1}.$$
The complex structure on $\mC$ pulls back to a complex structure on
$\A$ given by $J_\A a= a \circ j_\Sig$. 

The identification between complex structures and connections is
equivariant for gauge transformations.  The group of complex gauge
transformations $\G(P)$ acts on the space of holomorphic structures
$\mC$ on $P_\C$ by pull-back, and pulling back to $\A(P)$ one obtains
an action of $\G(P)$ on $\A(P)$ which extends the action of unitary
gauge transformations $\K(P)$.  For any $\xi \in \Gamma(\Sig,P(\k))$,
the infinitesimal action of $i\xi$ on $A$ is $-\d_A\xi \circ j_\Sig$.
Since $\Sig$ is a Riemann surface, the infinitesimal action of $\xi$
can be rewritten as
\begin{equation}\label{eq:infgcaction}
 (i\xi)_{\A(P)}= -\d_A\xi \circ j_\Sig=*\d_A\xi.
\end{equation}

\subsection{Gauged holomorphic maps}

Roughly speaking ``gauging'' an object means introducing a connection
into the picture. 
A {\em gauged holomorphic map}
$(A,u)$ from $P$ to $X$ consists of a connection $A$ and a section
$u$ of $P(X)$ that is holomorphic with respect to $\delbar_A$. 
The
space of gauged holomorphic maps from $P$ to $X$ is denoted
$\H(P,X)$. 
We remark that here we are talking about $K$-gauged holomorphic maps, which will be the default for this paper, as opposed to the $G$-gauged holomorphic map defined in Section \ref{sec:intro}.
A {\em symplectic vortex} is a
gauged holomorphic map that satisfies
\begin{equation}\label{eq:vortex}
F_{A,u}:=*F_A +\Phi(u)=0.
\end{equation}
In the case when $\Sig=\C$ is equipped with the standard Euclidean
metric, a solution of \eqref{eq:vortex} is called an {\em affine
  vortex}.
The {\em energy} of a gauged holomorphic map
  $(A,u)$ is
$$E(A,u):= \int_\Sig (|F_A|^2+|\d_Au|^2+|\Phi \circ u|^2) \om_\Sig.$$
For $\Sig=\C$, finite energy symplectic
vortices with bounded image have good asymptotic properties (see \cite[Section 11]{GS},
\cite{Zilt:QK}).
 %
The complexified gauge group acts on $\H(P,X)$ diagonally: $g(A,u):= (g(A),gu)$.
This action preserves holomorphicity (see \cite{Venu}) but not the
vortex equation unless the gauge transformation is unitary. 
From the
holomorphic viewpoint, a $K$-gauged holomorphic map $(A,u)$ on a Riemann
surface $\Sig$ consists of a $G$-gauged holomorphic map $(P_\C,u)$ and a section $\sig:\Sig \to P_\C/K$. 
The section $\sig$ provides a reduction of the structure group $G$ to $K$ and yields a $K$-bundle $P \subset P_\C$. The holomorphic structure on $P_\C$ determines a connection $A$ on $P$. This point of view
is used in stating Theorem \ref{thm:premain}.

\subsection{Gauge theory on weighted projective lines}\label{sec:orbibundle}
In the Hitchin-Kobayashi correspondence we wish to prove, gauged holomorphic maps
on orbifolds play an important role. 
For orbifolds, we
follow classical definitions of Satake \cite{Satake}. In Satake's definition, composition of morphisms is ambiguous, but that issue does not arise in our paper.
 Gauged holomorphic maps on
orbifolds are similar to $J$-holomorphic curves on orbifolds described
in \cite{ChenRuan}. We will not repeat full definitions here, but just
describe what the definitions give in the specific case that the
orbifold is the weighted projective line.
We begin with some notation for weighted projective 
lines.  For $\fix$ a positive integer, $\P(1,\fix)$ is the quotient of
$\C^2 - \{ 0 \}$ by the action of $\C^*$ with weights $1,\fix$.
It is covered by two orbifold charts, $\tU_1$ and $U_2$ where
$\tU_1=U_2 = \C$ mapping to $\P(1,n)$ as $z \mapsto  [(1,z)]$ and $z \mapsto [(z,1)]$ respectively. 
The equivalences are :
\begin{equation}\label{eq:charts}
\begin{split}
z \sim e^{2\pi i/\fix}z& \quad \quad  z \in \tU_1\\
z^{-\fix} \sim w& \quad \quad 0 \neq w  \in U_2, 0 \neq z \in \tU_1.
\end{split}
\end{equation}
We refer to $\tU_1/\sim$ as $U_1$. We denote by $\sig_\fix : \tU_1
\to \tU_1, \ z \mapsto e^{2\pi i/\fix}z$ the diffeomorphism
giving the action of the generator of $\Z_\fix$. 
The orbifold $\P(1,\fix)$ has smooth locus given by a copy of $\C$,
and if $\fix \neq 1$ has an ``orbifold singularity'' at $\infty$.
More precisely, the orbifold point is the quotient $\P(\fix)$ of
$\C^*$ by $\C^*$ acting with weight $\fix$.  As a groupoid this is
equivalent to the quotient of a point by $\Z_\fix$, that is,
$B\Z_\fix$.

Principal bundles on the weighted projective line can be defined via the clutching construction as follows. For a more general definition of bundles on orbifolds, see \cite{Ruan:orbifold}.
An orbifold principal $K$-bundle $P$ on $\P(1,n)$ is given by transition maps
$\mu:\tU_1 \to K$ and $\tau: \tU_1 \bs\{0\} \to K$ that satisfy $\tau(\sig_\fix(z))=\tau(z)\mu(z)^{-1}$. 
$$P:=((\tU_1 \times K) \bigsqcup (U_2 \times K))/\sim,$$
and the equivalence $\sim$ is given by
\begin{equation}\label{eq:equivbund}
\begin{split}
(z,h) &\sim (\sig_\fix z, \mu(z)h) \qquad (z,h) \in \tU_1 \times K\\
(z,h) &\sim (w,\tau(z)h) \qquad 0 \neq z \in \tU_1, w \in U_2, w=\frac 1 {z^\fix}, h \in K.
\end{split}
\end{equation}
Two
orbifold bundles over $\P(1,n)$ given by transition functions
$(\mu_0,\tau_0)$ and $(\mu_1,\tau_1)$ are isomorphic if there exist
smooth functions $\phi_1:\tU_1 \to K$ and $\phi_2:U_2 \to K$
satisfying 
\begin{equation}\label{eq:orbibundleiso}
\begin{split}
\phi^{-1}_1(\sig_\fix(z)) \mu_0(z) \phi_1(z)&= \mu_1(z) \quad \forall z \in \tU_1\\
\phi_2^{-1}(w)\tau_0(z)\phi_1(z)&=\tau_1(z) \quad \forall z \in \tU_1, w \in U_2, w=z^{-\fix}.
\end{split}
\end{equation}
Note that the fiber over the singular point $[0] \in U_1$ may not be
$K$, but rather $K/\Z_m$, where $m$ is the order of $\mu(0)$. We
remark in the case $\fix=1$, $\mu$ can be chosen to be $\Id$ and
$\tau:\C^\times \to K$ is the standard transition function.  

The
clutching construction for bundles above is related to the
classification of principal bundles up to isomorphism.  In the case
without singularities, the set of principal $K$-bundles $P \to \P^1$
is in bijection with $\pi_1(K)$. This can be seen as follows: the
deformation retract of the transition map $\C^*\to K$ is a loop $S^1
\to K$, whose homotopy class determines the bundle. The loop $S^1 \to
K$ can be deformed to a geodesic loop $\theta \mapsto e^{\lambda
  \theta}$, where $\lambda \in \k$ satisfies $e^{2\pi \lambda}=\Id$.
In the case of orbifold singularities, given a bundle $P \to
\P(1,\fix)$, the transition maps would now produce a geodesic path
$\theta \in [0,2\pi] \mapsto e^{\lambda \theta}$, $\lambda \in \k$
satisfies $e^{2\pi \fix \lambda}=\Id$, see Remark
\ref{rem:orbibundleinv}\eqref{part:stdform}.  The isomorphism type of
the bundle $P$ is determined by the homotopy class of the {\em
  classifying path} $\theta \mapsto e^{\lambda \theta}$, that is,
deformations keeping the endpoints $\Id$ and $e^{2\pi \lambda}$ fixed
or by applying $\Ad_k$ to the path for some $k \in K$.  In this way
one obtains a bijection between isomorphism classes of $K$-bundles
over $\P(1,\fix)$ and elements of $\exp( 2\pi i \fix \cdot)^{-1}(1)$,
up to conjugacy.

Connections on principal bundles can be described via their
restriction to trivialization associated to the clutching
construction.  A connection on $P \to \P(1,\fix)$ is given by
connections on trivializations $\tU_1 \times K$ and $U_2 \times K$
that satisfy the equivalences \eqref{eq:equivbund}:
\begin{enumerate}
\item The connection $A|_{\tU_1}$ satisfies
  \begin{equation}\label{eq:znsymmetry}
    \sig_\fix^*A=\mu(A),
  \end{equation}
  where $\mu$ acts on $A$ as a gauge transformation.
\item By the above condition, $\sig_\fix^*(\tau(A))=\tau(A)$ on $\tU_1
  \bs \{0\}$, so $\tau(A)$ descends to a connection on $\tU_1 \bs
  \{0\}/\Z_\fix$. We require that this descended connection is
  $A|_{U_2 \bs \{0\}}$.
\end{enumerate}
We define gauge transformations in terms of the canonical
atlas.  A { gauge transformation} $k$ on $P$ consists of $\tilde
k_1=k|_{\tU_1}:\tU_1 \to K$ and $k_2=k|_{U_2}:U_2 \to K$ satisfying
the equivalences \eqref{eq:equivbund}:
\begin{enumerate}
\item $\sig_\fix^*k_1= \mu k_1 \mu^{-1}$ and
\item $\tau k_1|_{\tU_1 \bs \{0\}}\tau^{-1}$ descends to $k_2$. 
\end{enumerate}
This description shows that the set of gauge transformations on a $P \to \P(1,n)$ forms a group $\K(P)$ under composition.

Finally we describe gauged holomorphic maps on weighted projective
lines.  Let $P \to \P(1,\fix)$ be a principal $K$-bundle. A {\em
  gauged holomorphic map} $(A,u)$ from $\P(1,\fix)$ to $X$ consists of
gauged holomorphic maps on the bundles $\tU_1 \times K$ and $U_2
\times K$ that satisfy the equivalence conditions \eqref{eq:equivbund}
\begin{enumerate}
\item $(A,u)|_{\tU_1}$ satisfies $\sig_\fix^*(A,u)=\mu(A,u)$ (viewing
  $\mu$ as a gauge transformation). 
\item By the above condition, $\sig_\fix^*(\tau(A,u))=\tau(A,u)$ on
  $\tU_1 \bs \{0\}$, so it descends to a gauged holomorphic map on
  $\tU_1 \bs \{0\}/\Z_\fix$. We require that this descended map is $(A,u)|_{U_2 \bs \{0\}}$.
\end{enumerate}

  \label{rem:holobundle}
  Holomorphic bundles on $\P(1,\fix)$ can be described in a similar
  way to the unitary bundles. For holomorphic bundles, the transition
  functions $\mu:\tU_1 \to G$ and $\tau:\tU_1\bs\{0\} \to G$ are
  holomorphic maps.  Now any holomorphic principal bundle over $\C$ is
  trivial (see \cite[Remark 19.6]{Cornalba}). So, the complex gauge
  equivalence class of a gauged holomorphic map can be specified by
  $u: \tU_1 \sqcup U_2 \to X$ and the transition functions $\tau$ and
  $\mu$.

\subsection{Standard form near infinity}

In this section we show that we
can complex gauge-transform a gauged holomorphic map on a weighted
projective line to a {\em standard form} near infinity.  We identify
the complement of the orbifold point
in $\P(1,n)$
 with $\C$, and let $B_R$ denote an open ball of radius $R$ around
 $0$.

\begin{proposition} \label{prop:connext}
Suppose $A$ is a connection on $P \to \P(1,\fix)$. There is a complex
gauge transformation $g \in \G(P)$, and a trivialization of $P$ over $\C$
so that $gA|_\C=\onD+\lambda d \theta$ on $\C \bs B_R$ for some $R>0$,
$\lambda \in \k$ satisfying $e^{2\pi \fix\lambda}=\Id$.  Conversely,
if $A$ is a connection on $\C$ such that $A=\onD + \lambda d\theta$ on $\C \bs B_R$, then it
extends to a connection on a principal bundle over $\P(1,\fix)$.
\end{proposition}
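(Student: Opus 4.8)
The plan is to reduce everything to the uniformising $\fix$-fold cover of a neighbourhood of the orbifold point and to exploit the holomorphic triviality of bundles over a disc. For the forward direction, I would first trivialise $P$ over the contractible set $\C$, so that $A=\d+a$ with $a\in\Om^1(\C,\k)$. Near infinity I pass to the cover $\tSig_R=\C\bs B_{R^{1/\fix}}$, with covering map $z\mapsto z^\fix$ and deck transformation $\sig_\fix\colon z\mapsto e^{2\pi i/\fix}z$; adjoining the point at infinity turns $\tSig_R$ into a disc $\tilde D$ whose centre uniformises the orbifold point. Because $A$ is a smooth connection on the orbifold bundle $P$, its pullback $\tilde A:=p^*A$ extends to a smooth connection on a $K$-bundle over $\tilde D$, which is trivial as $\tilde D$ is contractible. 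The holomorphic structure $\delbar_{\tilde A}$ makes the complexified bundle a holomorphic $G$-bundle over the disc, hence holomorphically trivial (Remark \ref{rem:unitholo}); thus there is a complex gauge transformation $\tilde g$ with $\delbar_{\tilde g(\tilde A)}=\delbar$, and since $\tilde g(\tilde A)$ is again a $K$-connection this forces $\tilde g(\tilde A)=\d$.

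It then remains to keep track of the $\Z_\fix$-equivariance. In the new trivialisation the deck transformation acts through a smooth map $\mu'$, and the equivariance of the now trivial connection reads $\sig_\fix^*\d=\mu'(\d)$; since $\sig_\fix^*\d=\d$, Remark \ref{rem:trivconnhol} shows $\mu'$ is holomorphic, with $\gamma':=\mu'(0)$ satisfying $\gamma'^\fix=\Id$. The key step is to linearise this holomorphic $\Z_\fix$-action at the fixed point: averaging over $\Z_\fix$ (Bochner/Cartan linearisation) produces a holomorphic $h\colon\tilde D\to G$ with $h(\sig_\fix z)\,\mu'(z)\,h(z)^{-1}\equiv\gamma'$ constant, and $h$ preserves $\d$ because it is holomorphic. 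Conjugating the finite-order element $\gamma'$ into a maximal torus by a constant gauge transformation, I may assume $\gamma'=e^{2\pi\lambda}$ with $\lambda\in\t\subset\k$ and $e^{2\pi\fix\lambda}=\Id$. Finally the torus-valued gauge transformation $\nu(z)=e^{-\fix\lambda\,\theta}$ is single-valued on $\tSig_R$ precisely because $e^{2\pi\fix\lambda}=\Id$, it trivialises the constant $\gamma'$-action, and it carries $\d$ to $\d-\fix\lambda\,d\theta=p^*(\d-\lambda\,d\theta)$. Reading all of this downstairs, the composite complex gauge transformation descends to one on $P\to\P(1,\fix)$ taking $A$ to $\d+\lambda\,d\theta$ on $\C\bs B_R$ (after renaming $-\lambda$ as $\lambda$); inside $B_R$ I extend it arbitrarily and smoothly. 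The main obstacle is exactly this equivariant linearisation, together with the bookkeeping that every gauge transformation used descends to an honest gauge transformation of the orbifold bundle.

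For the converse, set $\gamma:=e^{2\pi\lambda}\in K$, which has $\gamma^\fix=\Id$. I build $P$ by the clutching construction of Definition \ref{def:charts}, using the trivial bundle over $\C$ and, near the orbifold point, the cover trivialisation with the \emph{constant} transition function $\mu\equiv\gamma$ defining the $\Z_\fix$-action (a genuine action since $\gamma^\fix=\Id$), glued by a transition $\tau$. Pulling $A$ back to $\tSig_R$ gives $\d+\fix\lambda\,d\theta$, and the single-valued gauge transformation $\nu(z)=e^{-\fix\lambda\,\theta}$ carries it to $\d$, which extends smoothly over the centre of the disc; one checks directly that this realises the clutching conditions of Definition \ref{def:charts} for connections, so that $A$ is indeed a connection on $P\to\P(1,\fix)$.
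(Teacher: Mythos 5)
Your converse direction is essentially the paper's, and the first half of your forward direction (holomorphic triviality over the uniformizing disc, then linearizing the residual $\Z_\fix$-cocycle at the fixed point) is sound and close in spirit to averaging arguments the paper itself uses elsewhere. But there is a genuine gap at the final step, and it is not bookkeeping. The composite $\Psi=\nu\,c\,h\,\tilde g$ you construct upstairs satisfies $\Psi(\sig_\fix z)\,\mu(z)\,\Psi(z)^{-1}=\Id$: it \emph{trivializes} the clutching cocycle. By Definition \ref{def:charts}, a gauge transformation of $P$ must instead \emph{conjugate} it, $\sig_\fix^*k_1=\mu k_1\mu^{-1}$, and the two conditions agree only when $\mu\equiv\Id$. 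So $\Psi$ does not ``descend to a complex gauge transformation on $P$''; what it really produces is a trivialization $T_\infty$ of $P$ defined only near the orbifold point, in which $A$ looks standard. To obtain the proposition you must compare $T_\infty$ with a trivialization $T_0$ of $P$ over $\C$, and the transition $\chi:\C\bs B_R\to G$ between them has a homotopy class in $\pi_1(G)\cong\pi_1(K)$ that records the clutching data of $P$; it is nonzero whenever $P$ is topologically nontrivial, so $\chi$ admits no smooth extension over $B_R$. This is exactly where ``inside $B_R$ I extend it arbitrarily and smoothly'' fails.

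The obstruction is invisible in your construction because your $\lambda$ is a logarithm of the isotropy element $\gamma'=\mu'(0)$, an element of order dividing $\fix$, whereas the $\lambda$ in the proposition must record the holonomy at infinity, i.e.\ the topology of $P$. Take $\fix=1$, so $\P(1,\fix)=\P^1$, the linearization step is vacuous and $\gamma'=\Id$, and let $P$ be a $U(1)$-bundle of degree $d\neq 0$ (the Jaffe--Taubes setting): your output is $\lambda=0$, i.e.\ $gA=\d$ near $\infty$ in a trivialization extending over all of $\C$; writing $gA=\d+a$ there, with $a\equiv 0$ near $\infty$, the Chern--Weil integral $\int_{\P^1}F_{gA}=\int_{\C}\d a$ would vanish, contradicting $\deg P\neq 0$. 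The paper's proof is organized precisely to avoid this: the flattening transformation near the orbifold point is made \emph{equivariant} (via uniqueness in Donaldson's boundary value theorem, giving $s\circ\sig_\fix=\Ad_\mu s$) and cut off, so it is an honest global gauge transformation of $P$; the flat connection is then read in a trivialization over $\C$, where it is $\d+a(\theta)\,\d\theta$ in radial gauge, and the Claim in the paper homotopes the holonomy path $k(\theta)$ to a geodesic $e^{\lambda\theta}$ --- this is where $\lambda$, hence the topology, enters --- the key point being that $e^{\lambda\theta}k^{-1}$ is null-homotopic and therefore extends over $B_R$ by a cutoff. Your argument could in principle be repaired by choosing the logarithm of $\gamma'$ so as to kill $[\chi]\in\pi_1(K)$ and then verifying extendability both over $B_R$ and over the orbifold point, but nothing in your proposal supplies that choice; it is precisely the missing homotopy-theoretic content.
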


\begin{proof} Let $A$ be a
  connection on a principal bundle $P \to \P(1,n)$.
  We can transform $A$ to a flat connection in a neighborhood of
  infinity by complex gauge transformation as follows.  Choose $R_1>0$
  and consider $B_{2R_1}\subseteq \tU_1$. By Theorem \ref{thm:YMbdry},
  there is a unique $s: B_{2R_1}\to \k$ with $s|_{\partial
    B_{2R_1}}=0$, so that $e^{is}A$ is a flat connection.  By the
  uniqueness of $s$ and the symmetry of $A$ (see
  \eqref{eq:znsymmetry}), the complex gauge transformation $e^{is}$ is
  symmetric under the $\Z_n$ action, i.e.  $s \circ \sig_\fix =
  \Ad_\mu s$. Let $\eta:\tU_1 \to [0,1]$ be a radially symmetric
  cut-off function that is 1 on $B_{R_1}$ and vanishes on $\tU_1 \bs
  B_{2R_1}$. It is easy to see that $e^{i\eta s}$ defines a complex
  gauge transformation $g$ on all of $\P(1,\fix)$.  The connection
  $gA$ is flat near infinity.

Next we do a further unitary gauge transformation so that the
connection is in standard form, working over $U_2$. Let $R=
R_1^{-\fix}$. Choose a trivialization $P|_{U_2} \to U_2 \times G$
so that $gA$ is in radial gauge outside $B_R$, and since $gA$ is flat,
$gA=\onD+a(\theta) \d \theta$, for some $a:S^1 \to \k$. We now produce
a gauge transformation $k: U_2 \to K$ such that $kgA=\onD + \lambda
\d\theta$ for some $\lambda \in \k$. Let $k_1:[0,2\pi] \to K$ be the
solution of
\begin{align*}
\frac {k_1^{-1}dk_1} {\d \theta} &= a(\theta), & k_1(0)&=\Id.
\end{align*}
The path $\theta \mapsto k_1(\theta)$ can be homotoped to a geodesic
$\theta \mapsto e^{\lambda \theta}$, $\lambda \in \k$.  Then,
$e^{\lambda \theta}k_1^{-1}$ is a gauge transformation on $U_2 \bs B_R$
that is homotopic to the identity and it transforms $gA$ to
$\onD+\lambda \d \theta$ on $U_2 \bs B_R$. By using a cut-off function,
$e^{\lambda \theta}k^{-1}$ can be extended to a gauge transformation $k$
on all of $U_2$. The holonomy of $kgA$ about infinity is
$e^{2\pi \lambda}$. Since $g(A)$ has trivial holonomy for loops
close to $0$ in $\tU_1$, we have $e^{2\pi
  \fix\lambda}=\Id$.

For the converse, we construct a bundle $P \to \P(1,\fix)$. Set
$\mu=e^{-2\pi\lambda}$ and $\tau=e^{\fix\lambda \theta}$. We are given
$A|_{U_2}$. A connection $A|_{\tU_1\bs \{0\}}$ can be constructed using the transition
function $\tau$. The connection $A|_{\tU_1\bs \{0\}}$ is trivial on $B_{R^{-\fix}}\bs \{0\}
\subseteq \tU_1\bs \{0\}$, so it extends smoothly to a connection on $\tU_1$.
\end{proof}

\begin{remark}  \label{rem:orbibundleinv} 
\begin{enumerate} 
\item 
{\rm (Choice of orbifold singularity)} 
Suppose $A$ is a connection on the trivial bundle $\C \times K$ of the form mentioned in the
above proposition, i.e. $A=\onD+\lambda \d \theta$ on $\C \bs B_R$ with
$e^{2\pi \lambda \fix}=\Id$. We can extend $A$ to a connection on
principal bundles not just over $\P(1,\fix)$, but also $\P(1,m\fix)$
for any positive integer $m$.
\item \label {part:stdform} {\rm (Choice of standard form)} In the
  lemma above, the infinitesimal holonomy $\lambda$ produces the
  classifying path for the bundle as $\theta \mapsto e^{\lambda
    \theta}$. The choice of $\lambda$ is unique up to the action of
  $\Ad_K$. In our description of bundles over $\P(1,\fix)$, the homotopy class of this path
  can be recovered from the transition functions $\mu$, $\tau$ as the
  concatenation of the path $\theta \in [0,\frac {2\pi} n] \mapsto
  \tau^{-1}(r,0)\tau(r,\theta)$ and $t \in [-r,0] \mapsto
  \mu(-t,0)$. This is a continuous path because
  $\tau^{-1}(r,0)\tau(r,\frac {2\pi}n)=\mu(r,0)^{-1}$.
\end{enumerate} 
\end{remark}

The next result follows easily from Proposition \ref{prop:connext}.

\begin{proposition} \label{prop:ghext} {\rm (Standard form near infinity
for gauged holomorphic maps)} Let $P \to \P(1,\fix)$ be a principal
  $K$-bundle and let $(A,u)$ be a gauged holomorphic map from $P$ to
  $X$. There is a complex gauge transformation $g$ on $P$ and a
  trivialization of $P$ over $\C$ so that $g(A,u)$ satisfies the
  following:
\begin{enumerate}
\item There is a $\lambda \in \k$ so that $gA=\onD+\lambda \d \theta$ on
  $\C \bs B_R$ for some $R>0$.  The element $\lambda$ satisfies
  $e^{2\pi \fix\lambda}=\Id$.
\item There exists $x \in X$ such that for any $\theta \in [0,2\pi)$, $\lim_{r \to \infty}e^{-\lambda
    \theta}u(r,\theta)=x$ and $e^{2\pi \lambda}x=x$.
\end{enumerate}
Conversely, any gauged holomorphic map from $\C \times K$ to $X$ that
satisfies the above conditions for some $n$ extends to a map on $\P(1,\fix)$ for
some principal bundle $P \to \P(1,\fix)$. 
\end{proposition}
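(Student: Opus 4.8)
The plan is to treat the connection and the section separately, deferring the connection entirely to Proposition \ref{prop:connext} and carrying the section along as a rider. For the forward direction I would first apply Proposition \ref{prop:connext} to the connection underlying $(A,u)$, obtaining a complex gauge transformation $g$ on $P$ and a trivialization over $\C = U_2$ with $gA = \d + \lambda\,\d\theta$ on $\C \bs B_R$ and $e^{2\pi\fix\lambda} = \Id$; this is exactly part (1). The essential point is that $g$ is a genuine complex gauge transformation on the bundle $P \to \P(1,\fix)$, so $g(A,u)$ is still a gauged holomorphic map over the orbifold; in particular its section still extends over the orbifold point at infinity. Write $(A',u') = g(A,u)$.

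For part (2) I would analyze $u'$ near infinity using the gauge transformation $h(\theta) = e^{-\lambda\theta}$. A direct computation shows $h$ carries $\d + \lambda\,\d\theta$ to the trivial connection $\d$: the term $\d h\, h^{-1} = -\lambda\,\d\theta$ cancels $\Ad_h(\lambda\,\d\theta) = \lambda\,\d\theta$, since $\lambda$ commutes with $e^{-\lambda\theta}$. Hence $\hat u := e^{-\lambda\theta} u'$ is holomorphic with respect to $\d$. Although $h$ is only multivalued on $\C \bs B_R$, with monodromy $e^{-2\pi\lambda}$, the relation $e^{2\pi\fix\lambda} = \Id$ makes $\hat u$ single-valued on the $\fix$-fold cover, and pulling back along $w = \zeta^\fix$ turns it into an honest holomorphic map $v$ on $\tSig_R = \C \bs B_{R^{1/\fix}}$. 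Under $\zeta = 1/z$ this is a holomorphic map on a punctured neighbourhood of the orbifold point $z = 0 \in \tU_1$, and the transition function between the $U_2$- and $\tU_1$-trivializations produced by Proposition \ref{prop:connext} is exactly $e^{\lambda\theta}$, so that $v$ coincides there with $u|_{\tU_1}$. Since $u|_{\tU_1}$ extends smoothly over $z=0$, the limit $x := \lim_{r\to\infty} e^{-\lambda\theta}u'(r,\theta)$ exists, is independent of $\theta$, and equals $u|_{\tU_1}(0)$. Evaluating the equivariance $\sig_\fix^*(A',u') = \mu(A',u')$ at the fixed point $z=0$, where the isotropy acts by $e^{2\pi\lambda}$, yields $e^{2\pi\lambda}x = x$, which is part (2).

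For the converse I would run this backwards. Given $(A,u)$ on $\C \times K$ satisfying (1) and (2), the converse half of Proposition \ref{prop:connext} already extends $A$ to a connection on a bundle $P \to \P(1,\fix)$ via the transition data $\mu = e^{-2\pi\lambda}$, $\tau = e^{\fix\lambda\theta}$. To extend the section I would set $u|_{\tU_1}(z) := \tau(z)^{-1} u(w)$ with $w = 1/z^\fix$; condition (2), which controls $e^{-\lambda\theta}u$, says precisely that this extends continuously over the puncture $z = 0$ with value $x$, and then by removable singularity it extends holomorphically, while $e^{2\pi\lambda}x = x$ guarantees compatibility with the $\Z_\fix$-action $\mu(0) = e^{-2\pi\lambda}$ at $z=0$. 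Holomorphicity away from the puncture is automatic, as $u|_{\tU_1}$ is a gauge transform of the holomorphic $u$. Finally, the freedom to replace $\fix$ by any positive integer with $e^{2\pi\fix\lambda} = \Id$ is exactly Remark \ref{rem:orbibundleinv}(a): enlarging the orbifold weight only enlarges the isotropy group fixing $x$ and leaves the local model undisturbed.

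The step I expect to be the real bookkeeping obstacle is matching the two trivializations in part (2): one must verify that the transition function supplied by Proposition \ref{prop:connext} is precisely the one cancelling the twist $e^{-\lambda\theta}$, so that the holomorphic map $v$ on the cover genuinely is the restriction of $u|_{\tU_1}$ rather than some other local frame. Keeping the signs and the angular variable consistent (the $U_2$ angle $\arg w$ versus the $\tU_1$ angle $\arg z$, with $\arg w = -\fix\arg z$) is where all the content sits; once the trivializations are reconciled, the existence and $\theta$-independence of the limit $x$ and the relation $e^{2\pi\lambda}x=x$ follow immediately from the assumed extension of $u$ over the orbifold point.
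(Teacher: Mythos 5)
Your reduction is exactly the paper's: the paper offers no separate proof of this proposition beyond the remark that it ``follows easily from proposition \ref{prop:connext}'', and your argument --- connection handled by that proposition, section carried along via the transition functions, the limit in (2) read off from the extension of $u|_{\tU_1}$ over the orbifold point, and $e^{2\pi\lambda}x=x$ obtained from the equivariance $u_1(\sig_\fix z)=\mu(z)u_1(z)$ evaluated at $z=0$ with $\mu=e^{-2\pi\lambda}$ --- is precisely that deduction. You also correctly identify the sign bookkeeping ($\arg w=-\fix\arg z$, the paper's $\tau$ versus the untwisting factor $e^{-\lambda\theta}$) as the only real content of the forward direction, and your treatment of it is sound.

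One caution on the converse, which applies equally to the paper's own formulation: condition (2) as written is a pointwise-in-$\theta$ radial limit, and this does \emph{not} ``say precisely'' that $\tau(z)^{-1}u(w)$ extends continuously over $z=0$; pointwise radial limits are strictly weaker than continuity at the puncture. Indeed, there exist non-constant entire functions $F$ with $F(w)\to 0$ along every ray as $|w|\to\infty$ (a classical construction, e.g.\ via Arakelian-type approximation); taking $\lambda=0$ and $u(w)$ to be such an $F$ composed into a holomorphic chart of $X$ gives a gauged holomorphic map satisfying (1) and (2) whose section has an essential singularity at infinity, hence extends over no $\P(1,\fix)$. So your step ``extends continuously, then by removable singularity extends holomorphically'' would fail under the literal hypothesis; it needs the limit in (2) to be uniform in $\theta$, equivalently a continuous extension of the untwisted section on the $\fix$-fold cover. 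This is evidently the intended reading, and it is what the paper actually has in every application of the converse: proposition \ref{prop:infvortsing} supplies $\lim_{r\to\infty}\max_{\theta}d\bigl(x_0,k_0(\theta)u(re^{i\theta})\bigr)=0$, a uniform statement. With that strengthening of (2), your argument goes through as written.
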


\begin{remark} {\rm (Topological invariants of affine vortices)}\label{rem:topinv} The second equivariant homology class of a gauged holomorphic map on
  $\P(1,n)$, $[(P,A,u)] \in H_2^K(X,\Q)$ is obtained by pushing
  forward the rational fundamental class of the domain $[\P(1,n)]$
  under a map $P \times_K X \to EK \times_K X$ given by a classifying
  map for $P$.  It is a deformation invariant of $(P,A,u)$.  By
  Theorem \ref{thm:main}, one can also associate to any finite energy
  affine vortex $(A,u)$ a homology class $[(P,A,u)]$. The class
  $[(P,A,u)] \in H_2^K(X,\Q)$ is integral if the $G$ action on
  $X^{\ss}$ is free, $n=1$. Consider the map
  $$f:H_2^K(X,\Q) \to H_2^K(\on{point},\Q)=H_2(BK,\Q) .$$ 
  The class $[P]:=f_*([(P,A,u)])$ determines the topology of the principal
  bundle $P \to \P(1,\fix)$. This topological information is precisely a
  choice of $\lambda \in \k$ satisfying $e^{2\pi \fix \lambda}=\Id$
  modulo $\Ad_K$ (see Remark \ref{rem:orbibundleinv}).
\end{remark}

\subsection{Equivariant convexity}\label{subsec:equivconvex}
The notion of convexity for symplectic manifolds first arose in work
by Eliashberg and Gromov \cite{Eliashberg:convexity}. Convex
symplectic manifolds have similar properties to compact
manifolds. This idea is extended to Hamiltonian symplectic manifolds
in Cieliebak et al. \cite{CGMS}. An important example of equivariantly
convex spaces are symplectic vector spaces with a linear group action and a
proper moment map.

\begin{definition}\label{def:convexinfty}
A K\"ahler manifold $(X,\om,J)$ with a Hamiltonian $K$-action is {\em
  equivariantly convex at infinity} if there is a $K$-invariant proper
function $f:X \to \R_{\geq 0}$ and a value $c_0$ such that if $f(x)>c_0$, then
\begin{align}\label{eq:eqconv}
\lan \nabla_\xi \nabla f, \xi \ran \geq 0 \quad \forall \xi \in T_xX, \quad df(J\Phi(x)_X) &\geq 0.
\end{align}
Here $\nabla f \in \on{Vect}(X)$ is the gradient
vector field of $f$ with respect to the metric $\om(\cdot,J\cdot)$. 
\end{definition}
The above definition is equivalent to the definition in \cite{CGMS}, where there is an additional term $\lan \nabla_{J\xi} \nabla f, J\xi \ran$ in the left hand side of the first equation above. But, when $X$ is K\"ahler $\nabla_{J\xi}=J\nabla \xi$ so that term is equal to $\lan \nabla_\xi \nabla f, \xi \ran$.
 The condition \eqref{eq:eqconv} implies that the image of a finite energy
affine vortex with bounded image is contained in the compact set
$\{f \leq c_0\}$ - see Proposition 11.1 in \cite{GS}.

\subsection{Sobolev Spaces}
In this paper, most analytic proofs are carried out in Sobolev
completions of the spaces described above. We set down our conventions
here. Let $\Sig$ be a compact Riemannian manifold, possibly with a smooth boundary and $E
\to \Sig$ a vector bundle. Fix a smooth covariant derivative
$\nabla$ on $E$.
For any $k \in \Z_{\geq 0}$, $p \in
\R_{>1}$ and a smooth section $\sig \in\Gamma(\Sig,E)$, define a norm
$$\Mod{\sig}_{W^{k,p}}^\nabla:=\bigl(\sum_{i=0}^k\int_\Sig|\nabla^i\sig|^p\dvol_\Sig\bigr)^{1/p}.$$
The norms for different choices of $\nabla$ are equivalent, so
$\nabla$ is dropped from the notation. The space $W^{k,p}(\Sig,E)$ is
the completion of the space of smooth sections under this norm. If $\Sig \subset \R^n$ is an open subset whose boundary $\ol \Sig \bs \Sig$ is smooth in $\R^n$, then the same definition carries over, i.e. $W^{k,p}(\Sig,E):=W^{k,p}(\ol \Sig,E)$. Using
such norms one can define the Sobolev completion $\A^{k,p}(P)$ of the
space of connections on a principal bundle, when $(k+1)p>\dim
\Sig$. We can also define Sobolev spaces of maps between manifolds
$W^{k,p}(\Sig,X)$ when $kp>\dim\Sig$. We refer to Appendix B of the
book \cite{Weh:Uh} for details. We use the notation
$H^k:=W^{k,2}$ for any $k \in \Z$.

Sobolev spaces can be defined when the value of the exponent $k$ is
negative. For $k\geq 0$, let $W^{k,p}_0(\Sig,E)$ be the completion of
the space of compactly supported smooth sections $C^\infty_0(\Sig,E)$
under the $W^{k,p}$ norm. Also, let $p':=\frac p
{p-1}$ and assume that the
bundle $E$ has a Riemannian metric $g$. Then,
$$\Mod{\sig}_{W^{-k,p}}:=\sup_{\sig' \in W_0^{k,p'}(\Sig,E)}\int (\sig,\sig')_g\dvol_\Sig.$$
Then, $W^{-k,p}(\Sig,E)$ is the completion of smooth sections
under the dual norm of $W^{k,p'}_0$. It is the dual of the space
$W^{k,p'}_0(\Sig,E)$ under the $L^2$-pairing of sections.

The operator
$$\d:W^{-k,p}(\Sig,P(\k)) \to W^{-k-1,p}(\Om^1(\Sig,P(\k)))$$
is bounded because it is obtained by dualizing
$$\d^*:W^{k+1,p}_0(\Om^1(\Sig,P(\k))) \to W^{k,p}_0(\Sig,P(\k)).$$
Combining this observation with the Sobolev multiplication Theorem (Proposition \ref{prop:sobmult}) we
see that for $p>\dim \Sig$, the curvature of an
$L^p$ connection is a distributional two-form in $W^{-1,p}$. Using these weak connections, a weak version of vortices can be defined.  Let $\Sig$ be a compact Riemann surface (possibly with a smooth boundary) and $(A,u)$ lie in $
(L^p \times W^{1,p})(\Sig)$ for some $p>2$. For such a pair, the
vortex equation holds in a weak sense if $F_{A,u}$ vanishes in
$W^{-1,p}$, i.e. for all $\xi \in W^{1,p}_0(\Sig,P(\k))$, we have
$\int_\Sig \lan F_{A,u},\xi\ran=0$. If $(A,u)$ is a vortex weakly, the energy of $(A,u)$ is well-defined because the curvature
 $F_A$ is equal to  $-\Phi(u) \dvol_\Sig$ which is in $C^0$.

 \section{From holomorphic maps to vortices}\label{sec:outside}

\subsection{Statement of main theorem}
 We prove a slightly refined version of the main result Theorem
 \ref{thm:premain} relating affine vortices to gauged holomorphic maps
 from orbifold lines.  This requires the following analytic
 definition: Fix $p>2$. We call a gauged holomorphic map $(A,u)$ on $P
 \to \P(1,n)$ {\em $p$-bounded} if it is smooth on $\C$ and on $B_{\tilde
   R}\subset \tU_1$, which is a neighborhood of $\infty$,
 $(A,u)|_{B_{\tilde R}} \in L^p \times W^{1,p}$. A {\em (complex)
   gauge transformation} on $P$ is {\em $p$-bounded} if it
 is smooth on $\C$ and in $W^{1,p}$ in a neighborhood of $\infty$.
 Denote by $\G(P)_{\on{bd}}$ the group of $p$-bounded
 complex gauge transformations. By the Sobolev embedding theorem, any
 $p$-bounded $(A,u)$ resp. $g$ is continuous and so
 $u(\infty)$ resp. $g(\infty)$ is well-defined.  The following is the
 analytic version of the main result of the paper.

\begin{theorem}\label{thm:main} 
  Suppose $X$ is a K\"ahler manifold with Hamiltonian action of a
  compact Lie group $K$, and $X$ is either compact or equivariantly convex
  at infinity with a proper moment map. Let $G$ be the
  complexification of $K$, and suppose $G$ action on $X^{\ss}$ has finite stabilizers. Let $n$ be an integer such that for any $x \in X^{\ss}$,
  $|G_x|$ divides $n$. Fix $2<p<2(1+\frac 1 n)$.
  \begin{enumerate}
  \item \label{part:mainthm1} Let $(A,u)$ be a gauged holomorphic map
    from a principal bundle $P \to \P(1,\fix)$ that satisfies
    $u(\infty) \in X^{\ss}$.  There is a $p$-bounded complex
    gauge transformation $g \in \G(P)_{\on{bd}}$ such that
    $g(A,u)|_\C$ is a smooth finite energy symplectic
    vortex with bounded image. Furthermore, $g$ is unique up to left multiplication by a
    $p$-bounded unitary gauge transformation.

  \item \label{part:mainthm2} Conversely, given any finite
    energy symplectic vortex with bounded image, there is a unique (up to isomorphism)
    $K$-bundle $P \to \P(1,n)$ so that $(A,u)$ extends to a $p$-bounded
    gauged holomorphic map on $P$. There is a $p$-bounded
    complex gauge transformation $g \in \G(P)_{\on{bd}}$ so that
    $g(A,u)$ is smooth over $\P(1,n)$. The gauged holomorphic map
    $g(A,u)$ is unique up to complex gauge transformations in $\G(P)$.
  \end{enumerate}
\end{theorem}
\begin{remark}{\rm (Relating the statements of Theorem \ref{thm:premain}
  and Theorem \ref{thm:main})} With our new notations, the statement of
  Theorem \ref{thm:premain} can be completed as follows. The section
  $\sig:\P(1,n) \to P_\C/K$ is smooth on $\C \subset \P(1,n)$ and in
  $W^{1,p}$ in a neighborhood of $\infty$. 

We now explain how Theorem \ref{thm:premain} follows from Theorem \ref{thm:main}. Given a gauged holomorphic
  map $(P_\C,u)$, choose any smooth section $\sig:\P(1,n) \to
  P_\C/K$. The choice of $\sig$ is equivalent to the choice of a
  $K$-bundle $\iota: P \hra P_\C$. The resulting unitary object
  $(A,u)$ is a gauged holomorphic map defined on $P$. Let $g\in
  \G(P)_{\on{bd}}$ be the complex gauge transformation produced by Part
  \eqref{part:mainthm1} of Theorem \ref{thm:main}. Then, the inclusion
  $g \circ \iota: P \hra P_\C$ corresponds to a section of $P_\C/K$
  that satisfies the conclusions of Theorem \ref{thm:premain}.
\end{remark}
In this section, we prove the first part of Theorem \ref{thm:main}. The second
part is proved in Section \ref{sec:last}. 

\subsection{The heat flow}

In this subsection, we state some relevant results about heat flow
from the work of the first author \cite{Venu}. Let
On closed surfaces a result of Mundet \cite{Mund} produces a vortex
from a gauged holomorphic map satisfying a stability condition.  We
will use an alternative method of producing this gauge transformation
using the heat flow from the work of the first author \cite{Venu}.
Let $c_0$ denote the minimal norm of the moment map on the unstable
locus,
\begin{equation}\label{eq:c0def}
  c_0=\inf\{|\Phi(x)|: x \in X, |K_x|=\infty\}.
\end{equation}
By assumption \ref{ass:freeaction}, $c_0>0$.

\begin{theorem}\label{thm:hk} {\rm(Heat flow for gauged holomorphic maps \cite[Theorem 4.4.1]{Venu}) }
  Let $\Sig$ be a compact Riemann surface without boundary and
  $(A_0,u_0)$ is a gauged holomorphic map on a principal bundle $P \to
  \Sig$. In addition, assume that $(A_0,u_0)$ satisfies $E(A_0,u_0)
  \leq c_0^2\vol(\Sig)$. Then, there is a complex gauge transformation
  $g=e^{i\xi}$, $\xi \in W^{2,p}(\Sig,P(\k))$ (for any $p>2$) such
  that $g(A,u)$ is a vortex. Up to unitary gauge equivalence, $g(A,u)$
  is the unique vortex in the complex gauge orbit of $(A,u)$.
\end{theorem}

This result is obtained by studying the gradient flow of $(A_0,u_0)$
in the space of gauged pairs on $\Sig$ under the vortex functional
$$(A,u) \mapsto \Mod{F_{A,u}}_{L^2(\Sig)}^2, \quad F_{A,u}:=*F_A+\Phi(u).$$
The trajectory of the flow $t \mapsto (A_t,u_t)$ lies in the complex
gauge orbit of $(A_0,u_0)$ and it converges to a critical point of the
functional. The bound on the energy of $(A_0,u_0)$ ensures that the
critical point corresponds to $F=0$, i.e. it is a vortex and that the
limit is also in the complex gauge orbit of $(A_0,u_0)$.  The flow has
the property of decreasing energy. This is seen by the following
energy identity.

\begin{lemma} {\rm (Cieliebak et
    al. \cite{CGS})} \label{lem:energytop} Let $\Sig$ be a 
  compact Riemann surface without boundary and $P$ a principal $K$-bundle on it. A pair
  $(A,u) \in \A(P) \times \Gamma(\Sig,P(X))$ satisfies
  \begin{equation}\label{eq:vortenergy}
    \begin{split}
      \hh\int_\Sig |F(A)|^2&+|\Phi \circ u|^2 + |\d_A u|^2 \dvol_\Sig \\
      &=\int_\Sig |\delbar_A u|^2 +\hh|*F_A + \Phi(u)|^2 \dvol_\Sig +
      \lan \om_X-\Phi,u\ran,
    \end{split}
  \end{equation}
  where $\lan \om_X-\Phi,u\ran = \int_\Sig u^*\om - d\lan
  \Phi(u),A\ran$.
\end{lemma}

\noindent The pairing
$\lan \om_X-\Phi,u\ran$ is a topological invariant of $(A,u)$ and is
preserved by the flow. Since $\Mod{F_{(A_t,u_t)}}_{L^2}^2$ decreases
with time $t$, the same is the case with the energy $E(A_t,u_t)$.

We recall several other results 
about gauged holomorphic maps on Riemann surfaces with boundary needed
for the proof of Theorem \ref{thm:main} \eqref{part:mainthm1}. The
first one says that any gauged holomorphic map which is an approximate
solution to the vortex equations may be complex gauge transformed into
one.

\begin{proposition}\label{prop:orbitvortex}{\rm(\cite[Proposition 4.3.1]{Venu})} Let $p > 2$. Suppose
  $\Sig$ is a compact connected Riemann surface with a smooth 
  boundary. Let $(A_i,u_i) \in \H(P,X)_{L^p \times W^{1,p}}$ be a
  sequence and $(A_\infty,u_\infty) \in \H(P,X)_{L^p \times W^{1,p}}$
  be such that $A_i \to A_\infty$ in $L^p$ and there is a finite set
  $Z \subseteq \Sig$ so that $u_i \to u_\infty$ in $C^0$ on compact
  subsets of $\Sig \bs (Z\cup \partial \Sig)$. Also,
  $F_i:=*F(A_i)+u_i^*\Phi \to 0$ in $W^{-1,p}$. Then, there exist constants $C$ and $i_0$
  so that for $i>i_0$, there is a complex gauge transformation $\exp
  i\xi_i$, $\xi_i \in W^{1,p}_0(\Sig,P(\k))$ so that $(\exp
  i\xi_i)(A_i,u_i)$ is a vortex and satisfies
  $\Mod{\xi_i}_{W^{1,p}}<8C\Mod{F_i}_{W^{-1,p}}$.
\end{proposition}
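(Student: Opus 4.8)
The plan is to run a quantitative inverse function theorem, exactly as in the proof of Proposition \ref{prop:outimpfn}, but now tracking that every constant can be chosen independently of $i$. For each $i$ set
$$\F_i : W^{2,p}_\delta(\Sig, P(\g)) \to L^p(\Sig, P(\g)), \qquad \xi \mapsto *F_{(\exp i\xi)A_i} + \Phi((\exp i\xi)u_i),$$
so that $\F_i(0) = *F_{A_i} + \Phi(u_i) = F_i$; the hypothesis $F_i \to 0$ in $L^p$ thus says that $\F_i$ has an approximate zero of size $\Mod{F_i}_{L^p} \to 0$. As computed in Proposition \ref{prop:outimpfn}, its linearization at the origin is the elliptic operator $L_i := D\F_i(0) = \d_{A_i}^*\d_{A_i} + \L_{u_i}$, where $\L_x\xi = \d\Phi_x(J\xi_x)$. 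Since $\langle \L_x\xi,\xi\rangle = \om(\xi_X(x), J\xi_X(x)) = |\xi_X(x)|^2 \geq 0$, the zeroth-order term is everywhere positive semidefinite, and as the $u_i$ take values in a compact subset of $X$ (the target of \cite{Venu}) it is uniformly bounded in $C^0$.

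The crux is to produce a right inverse $Q_i$ of $L_i$ with $\Mod{Q_i} \leq C$ uniform in $i$. I would obtain this by showing that $L_i \to L_\infty := \d_{A_\infty}^*\d_{A_\infty} + \L_{u_\infty}$ in the operator norm of bounded maps $W^{2,p}_\delta \to L^p$. The connection term converges because $A_i \to A_\infty$ in $W^{1,p}$: writing $A_i = \d + a_i$, the difference $\d_{A_i}^*\d_{A_i} - \d_{A_\infty}^*\d_{A_\infty}$ is a sum of terms linear and quadratic in $a_i - a_\infty$, bounded via the Sobolev multiplication Proposition \ref{prop:sobmult} (here $p>2$ in real dimension two supplies $W^{1,p}\hookrightarrow C^0$ and the algebra property). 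The zeroth-order term is the delicate point, since $u_i$ fails to converge on the bubbling set $Z$; the resolution is that $\L_{u_i} - \L_{u_\infty}$ is uniformly $C^0$-bounded and converges to $0$ pointwise off the measure-zero set $Z \cup \partial\Sig$, so by dominated convergence $\Mod{\L_{u_i} - \L_{u_\infty}}_{L^p} \to 0$; together with $W^{2,p}\hookrightarrow C^0$ and H\"older this gives $\Mod{(\L_{u_i} - \L_{u_\infty})\xi}_{L^p} \leq \Mod{\L_{u_i} - \L_{u_\infty}}_{L^p}\Mod{\xi}_{C^0} \to 0$ uniformly on the unit ball. The limit $L_\infty$ is invertible: it is formally self-adjoint, elliptic and of Fredholm index zero for the Dirichlet problem, and injective because $\langle L_\infty\xi,\xi\rangle \geq \Mod{\d_{A_\infty}\xi}_{L^2}^2$ vanishes only for covariantly constant $\xi$, hence only for $\xi \equiv 0$ under the condition $\xi|_{\partial\Sig} = 0$. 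Consequently $L_i$ is invertible with $\Mod{L_i^{-1}} \leq 2\Mod{L_\infty^{-1}} =: C$ for all $i \geq i_0$.

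With the uniform inverse in hand the remaining steps copy Steps 2 and 3 of Proposition \ref{prop:outimpfn} with $i$-independent constants. I would first establish a uniform Lipschitz estimate $\Mod{D\F_i(\xi) - D\F_i(0)} \leq c\Mod{\xi}_{W^{2,p}}$, obtained from Lemma \ref{lem:gcactona}, the Sobolev multiplication theorem, and the smoothness and uniform boundedness of $x \mapsto \L_x$ on the compact target — so that again only $C^0$-control of the $u_i$, not their behaviour near $Z$, is used; this yields $\delta>0$, independent of $i$, with $\Mod{D\F_i(\xi)-D\F_i(0)} \leq \tfrac{1}{2C}$ for $\Mod{\xi}_{W^{2,p}} < \delta$. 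Enlarging $i_0$ so that $\Mod{\F_i(0)}_{L^p} = \Mod{F_i}_{L^p} < \delta/4C$ for $i \geq i_0$, the quantitative implicit function theorem, Proposition \ref{prop:impfnMS}, then produces a unique $\xi_i \in \im Q_i$ with $\F_i(\xi_i)=0$ and $\Mod{\xi_i}_{W^{2,p}} \leq 2C\Mod{F_i}_{L^p}$. By construction $(\exp i\xi_i)(A_i,u_i)$ is a vortex and $\xi_i \in W^{2,p}_\delta$, which is exactly the assertion with $C' = 2C$. The main obstacle is the uniform invertibility of $L_i$ across the bubbling set $Z$, resolved by exploiting that $\L_{u_i}$ is positive semidefinite and converges in $L^p$ even where it does not converge uniformly.
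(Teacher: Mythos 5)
This proposition is never proved in the paper itself: it is quoted from \cite{Venu} (Proposition 4.3.1), so there is no in-paper argument to compare you against, and your proof has to be judged on its own terms. It is correct, and it is the natural argument: you adapt the quantitative implicit-function-theorem machinery of Proposition \ref{prop:outimpfn} to the compact-with-boundary setting, and the two places where the compact case differs from the noncompact end are handled correctly. First, with Dirichlet conditions the limit operator $L_\infty = \d_{A_\infty}^*\d_{A_\infty} + \L_{u_\infty}$ is invertible using only positive \emph{semi}definiteness of $\L_x$ (the quadratic form dominates $\Mod{\d_{A_\infty}\xi}_{L^2}^2$, and a covariantly constant section vanishing on $\partial \Sig$ vanishes identically), in contrast to Proposition \ref{prop:outimpfn}, where strict positivity of $\L_{u(\infty)}$ is needed; your index-zero claim is best justified not by formal self-adjointness (which is only directly useful for $p=2$) but by the observation, implicit in your setup, that $L_\infty - \d^*\d$ is a compact operator $W^{2,p}_\delta \to L^p$ (compose the compact embedding $W^{2,p} \hookrightarrow C^0$, resp.\ $W^{1,p}\hookrightarrow C^0$ for the first-order term, with multiplication by coefficients in $L^p$ resp.\ $C^0$), so that $L_\infty$ is a compact perturbation of the invertible Dirichlet Laplacian. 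Second, your treatment of the bubbling set $Z$ --- a uniform $C^0$ bound on $\L_{u_i}$ plus dominated convergence giving $\Mod{\L_{u_i}-\L_{u_\infty}}_{L^p} \to 0$, which suffices for operator-norm convergence because the zeroth-order term only ever pairs against $\Mod{\xi}_{C^0}$ --- is exactly the right mechanism for getting uniform invertibility of $L_i$ by a Neumann series. Two caveats worth recording: (i) the final estimate $\Mod{\xi_i}_{W^{2,p}} \leq 2C\Mod{F_i}_{L^p}$ uses the quantitative conclusion of the implicit function theorem, which is part of Proposition A.3.4 in \cite{MS} but is omitted from the statement of Proposition \ref{prop:impfnMS} in the appendix, so you should invoke the source rather than the appendix statement; (ii) your argument genuinely uses compactness of the target (uniform bounds on $\L_x$ and its derivatives), which is the setting of \cite{Venu}; for a merely equivariantly convex target one would first need to confine the images $u_i(\Sig)$ to a fixed compact set, e.g.\ by Lemma \ref{lem:ubd}, since the hypotheses give no control on $u_i$ near $Z \cup \partial\Sig$.
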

The next result we will need is Proposition 4.3.3 in
\cite{Venu}. Roughly it says that in a complex gauge orbit, there is
at most one vortex up to gauge. The proof is reproduced, because it
will be useful in understanding the corresponding result for affine
vortices.
\begin{proposition} \label{prop:atmost1vortex} Let $p>2$ and $\Sig$ be a compact connected 
  Riemann surface with a smooth boundary. Let $(A_0,u_0)$,
  $(A_1,u_1) \in L^p \times W^{1,p}$ be vortices on a principal bundle
  $P \to \Sig$ that are related by a complex gauge transformation $g
  \in \G^{1,p}(P)$, i.e. $(A_1,u_1)=g(A_0,u_0)$ and assume $g(\partial
  \Sig) \subseteq K$. Then, $(A_0,u_0)$ and
  $(A_1,u_1)$ are gauge-equivalent, i.e. $g \in \K^{1,p}(P)$.
\end{proposition}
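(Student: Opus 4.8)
The plan is to run the standard Kempf--Ness convexity argument along the complexified gauge orbit joining the two vortices. Writing $g = k e^{i\xi}$ with $k \in \K$ and $\xi \in \Gamma(P(\k))$, the hypothesis $g(\partial \Sig) \subseteq K$ together with the fact that the map $K \times \k \to G$, $(k,s) \mapsto k e^{is}$ is a diffeomorphism forces $\xi|_{\partial \Sig} = 0$. Since a unitary gauge transformation carries vortices to vortices (both $*F_A$ and $\Phi(u)$ being $K$-equivariant), it suffices to treat the case $g = e^{i\xi}$ and to prove $\xi \equiv 0$, which gives $g \in \K$. I would consider the path of gauged holomorphic maps $(A_t, u_t) := e^{it\xi}(A_0, u_0)$, so that $(A_0,u_0)$ and $(A_1,u_1)$ sit at its endpoints, and define
$$ f(t) := \int_\Sig \lan *F_{A_t} + \Phi(u_t), \xi \ran \, \om_\Sig. $$
Because both endpoints are vortices, $f(0) = f(1) = 0$.

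The key step is to show $f'(t) \geq 0$, i.e. geodesic convexity of the moment-map pairing along the orbit. Using that the infinitesimal action of $i\xi$ sends $A_t$ to $-*\d_{A_t}\xi$ and $u_t$ to $J(\xi)_{u_t}$, one differentiates: from $\tfrac{d}{dt} F_{A_t} = \d_{A_t}(-*\d_{A_t}\xi)$ and the two-dimensional identity $\d_{A_t}^* = -*\d_{A_t}*$ one gets $\tfrac{d}{dt}(*F_{A_t}) = \d_{A_t}^*\d_{A_t}\xi$, while the moment-map condition $\d\lan \Phi, \xi\ran = \iota(\xi_X)\om$ gives $\lan \tfrac{d}{dt}\Phi(u_t), \xi\ran = \om(\xi_{u_t}, J\xi_{u_t}) = |\xi_{u_t}|^2 = \lan \L_{u_t}\xi, \xi\ran$. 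Hence, after integrating by parts,
$$ f'(t) = \int_\Sig \big( |\d_{A_t}\xi|^2 + \lan \L_{u_t}\xi, \xi\ran \big)\, \om_\Sig \geq 0, $$
where the boundary term $\int_{\partial\Sig}\lan *\d_{A_t}\xi, \xi\ran$ produced by the integration by parts vanishes precisely because $\xi|_{\partial\Sig} = 0$. I expect this computation to be the main obstacle: one must keep the sign conventions for the $\G(P)$-action straight and verify that the only boundary contribution is the one killed by the boundary vanishing of $\xi$. The nonnegativity of the second term is then automatic, since $\lan \L_u \xi, \xi\ran = |\xi_u|^2$ is the squared norm of the generated vector field and positivity of the metric $\om(\cdot, J\cdot)$ does the rest.

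Finally, I would combine convexity with the endpoint values. Since $f$ is nondecreasing and $f(0) = f(1) = 0$, we have $\int_0^1 f'(t)\,dt = 0$ with $f' \geq 0$, forcing $f' \equiv 0$; evaluating at $t = 0$ gives $\int_\Sig(|\d_{A_0}\xi|^2 + |\xi_{u_0}|^2)\,\om_\Sig = 0$, so $\d_{A_0}\xi = 0$ pointwise. A covariantly constant section satisfies $\d|\xi|^2 = 2\lan \d_{A_0}\xi, \xi\ran = 0$, so $|\xi|$ is constant on each connected component of $\Sig$; as $\Sig$ is connected with nonempty boundary and $\xi$ vanishes there, this forces $\xi \equiv 0$. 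Therefore $g = k \in \K$, as claimed.
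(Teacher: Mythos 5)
Your proposal is correct and follows essentially the same route as the paper's proof: the convexity computation for $t \mapsto \int_\Sig \lan *F_{A_t} + \Phi(u_t), \xi \ran$ along the path $(A_t,u_t) = e^{it\xi}(A_0,u_0)$, with the boundary term killed by $\xi|_{\partial\Sig}=0$. Your final step is in fact slightly more careful than the paper's one-line claim that ``the inequality is strict for non-zero $\xi$'': you spell out that $\d_{A_0}\xi = 0$ forces $|\xi|$ constant, which combined with the boundary vanishing gives $\xi \equiv 0$.
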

\begin{proof}
  After a gauge transformation, we can assume
  $(A_1,u_1)=e^{i\xi}(A_0,u_0)$, where $\xi \in
  \Gamma(\Sig,P(\k))_{1,p}$ and $\xi|_{\partial \Sig}=0$.  Let
  $(A_t,u_t):=e^{it\xi}(A_0,u_0)$. We know $F_{A_0,u_0}=F_{A_1,u_1}=0$
  weakly. For $\xi|_{\partial \Sig}=0$,
  \begin{align*}
    \ddt \int_\Sig \lan *F_{A_t,u_t},\xi \ran &= \int_\Sig\langle \d_{A_t}^*\d_{A_t}\xi+u_t^*d\Phi(J(\xi)_{u_t}),\xi\rangle_\k\\
    &=\Mod{\d_{A_t}\xi}_{L^2}^2 +
    \int_\Sig\omega_{u_t}((\xi)_{u_t},J(\xi)_{u_t})\geq 0.
  \end{align*}
  The inequality is strict for non-zero $\xi$. So, $\xi=0$ and
  $(A_0,u_0)$ and $(A_1,u_1)$ are gauge-equivalent.
\end{proof}

\subsection{Elliptic regularity for gauged holomorphic maps}

This section contains some results related to elliptic regularity for
vortices and gauged holomorphic maps required in the proof of Theorem
\ref{thm:main} \eqref{part:mainthm1}. Most of these lemmas are similar
to results already present in the literature. We include them here,
because we require extensions to slightly lower regularity spaces.
\begin{lemma}{\rm (Gromov convergence for vortices \cite{Ott},
  \cite{Zilt:QK})}\label{lem:vortconv} Let
  $p>2$ and $\Sig_i$ be a sequence of precompact sets exhausting a
  Riemann surface $\Sig$,
  \begin{align*}
    \Sig_1 \Subset \Sig_2 \Subset \dots \Subset \Sig,&
    &\bigcup_i\Sig_i=\Sig.
  \end{align*}
  Suppose for each $i$, $(A_i,u_i)$ is a smooth vortex on $\Sig_i$,
  $$\sup_iE(\Sig_i,(A_i,u_i))<\infty$$
  and there is a compact set $S \subset X$ containing the images of
  all the $u_i$. Let $Z$ denote the set of points $z$ for which there is a
  sequence $z_i \to z$ in $\Sig$ such that $|\d_{A_i}u_i(z_i)| \to
  \infty$ as $i \to \infty$. Then, the set $Z$ is finite and after passing to a subsequence, there are gauge
  transformations $k_i \in H^2(\Sig_i,K)$ and a finite energy vortex $(A_\infty,u_\infty) \in H^1_{\loc}
  \times H^2_{\loc}$ over $\C$ such that
  \begin{enumerate}
  \item $k_iA_i \weakto A_\infty$ in $H^1$, and strongly in $L^p$, on
    compact subsets of $\Sig$.
  \item $u_i \weakto u_\infty$ in $H^2$, and strongly in $W^{1,p}$ and
    $C^0$, on compact subsets of $\Sig \bs Z$.
  \end{enumerate}
\end{lemma}
\begin{proof} This lemma is a combination of results in \cite{Zilt:QK}
  and \cite{Ott}. We provide an outline of the proof. The bounded
  energy condition implies a curvature bound
  $$\Mod{F(A_i)}_{L^2(\Sig_i)}<c$$
  for all $i$. By Uhlenbeck's theorem for non-compact domains (Theorem
  A' in \cite{Weh:Uh}), after passing to a subsequence, there are
  gauge transformations $k_i \in H^2(\Sig_i)$ and a limit connection
  $A_\infty$ on the trivial bundle $\Sig \times K$ such that $k_iA_i
  \weakto A_\infty$ in $H^1$ on compact subsets of $\Sig$.
  
  We first prove convergence on compact sets for bounded first
  derivatives.  On a compact set $Q \subset \Sig$, if there is a bound
  on $\Mod{\d_{A_i}u_i}_{L^p(Q)}$, then a subsequence of $k_iu_i$
  converges weakly in $H^2(Q)$.  To see this, write
  $k_iA_i=\onD+a_i$. Since all the $u_i$ map to a compact set and
  $a_i$ is bounded in $L^p$, $(a_i)_{u_i}$ is also bounded in $L^p$,
  hence there is an $L^p(Q)$ bound on $d(k_i u_i)$. This implies $k_i
  u_i$ converges weakly in $W^{1,p}(Q)$ and strongly in $C^0$ to a
  limit $u_\infty$ (see Theorem B.4.2 in \cite{MS}). By Lemma
  \ref{lem:ghellipreg} below, after passing to a subsequence $k_iu_i
  \weakto u_\infty$ in $H^2(Q)$. A standard diagonalization argument
  shows that there is a subsequence that works for all compact subsets
  $Q$.

  In the absence of a first derivative bound, one has bubbling.  As in
  Ott \cite{Ott}, there is a finite set $Z \subset \Sigma$ where
  bubbling occurs. That is, after passing to a subsequence,
  $\Mod{\d_{A_i}u_i}_{L^p(Q)}$ is bounded on compact subsets $Q \subset
  \Sig \bs Z$. There is a map $u_\infty:\Sig \bs Z \to
  X$ such that $k_iu_i$ converges weakly to $u_\infty$ in $H^2(Q)$ for
  all compact subsets $Q \subset \Sig \bs Z$. The map $u_\infty$
  extends to a continuous map on $\Sig$.  Since we are in the K\"ahler
  case, this has a much easier proof than that in Ott \cite{Ott}:
  Choose $z_0 \in Z$ and a small neighborhood $U \subset \Sig$ so that
  $U \cap Z=\{z_0\}$.  By Lemma \ref{lem:toflat}, there is a complex
  gauge transformation $g \in H^2(U) \hra C^0(U)$ such that
  $gA_\infty$ is the trivial connection. Complex gauge transformations
  preserve holomorphicity, so $\delbar(gu_\infty)=0$ on $U \bs
  \{z_0\}$, $gu_\infty$ is smooth on $U\bs \{z_0\}$ and the image
  $gu_\infty(U \bs \{z_0\})$ is contained in a compact set. By the
  removable singularity theorem of complex analysis (\cite[Theorem 3.1]{Stein_Shakarchi}), we have $gu_\infty$
  extends smoothly over $z_0$. Therefore $u_\infty$ extends over $z_0$ as an $H^2$ map, and so $u_\infty \in H^2_{\loc}$.
\end{proof}
The following elliptic regularity result (Theorem 3.1 in Cieliebak et
al.\cite{CGMS}) is standard, but we provide a self-contained proof.
\begin{lemma}\label{lem:ghellipreg} {\rm (Elliptic regularity for
    gauged holomorphic maps)}
  Let $s \geq 1$ be an integer, $p>2$, $\Sig \subseteq \C$ be a pre-compact open set with smooth boundary and $(A_i,u_i)$ be a sequence of
  gauged holomorphic maps on $\Sig$ such that $A_i \weakto A_\infty$
  in $H^s(\Sig)$ and $u_i \weakto u_\infty$ in $W^{1,p}(\Sig)$, then
  $u_i \weakto u_\infty$ in $H^{s+1}(\Sig')$, for any compact set
  $\Sig'$ that is contained in $\on{int}(\Sig)$.
\end{lemma}

\begin{proof} The proof is by induction on $s$. We first assume the
  result for some $s > 1$ and prove it for $s+1$. Note that it
  suffices to work locally in $X$: Choose an atlas $X=\cup_\alpha
  \V_\alpha$ such that $\V_\alpha$ is bi-holomorphic to an open subset
  of $\C^N$. Since $u_i \to u_\infty$ in $C^0$, we can find a finite
  cover $\Sig=\cup_\beta \U_\beta$ such that for $u_i(\U_\beta)$ is
  contained in a single $\V_\alpha$ for large $i$. So, now we can
  think of $u_i$ as mapping to $\C^N$.

  In each chart we apply a combination of Sobolev multiplication and
  regularity theorems.  As in \cite{CGMS}, write $A_i=\onD + \Theta_i
  dx + \Psi_i dy$, where $\Psi_i$, $\Theta_i \in H^s(\Sig,\k)$ and the
  holomorphicity equation for $(A_i,u_i)$ is
  \begin{align}\label{eq:holo}
    -\delbar u_i = (\Theta_i)_{u_i}+J_X(\Psi_i)_{u_i}.
  \end{align}
  See \eqref{eq:dau} for the notation $(\Theta_i)_{u_i}$. We know that
  both $A_i$ and $u_i$ weakly converge in $H^s$, so $\Psi_i$,
  $\Theta_i$ and $u_i$ are uniformly bounded in $H^s$. We next show
  that $(\Theta_i)_{u_i}$ and $(\Psi_i)_{u_i}$ are uniformly bounded
  in $H^s$. For this, define an operator $L_x$ for every $x \in X$,
  \begin{equation}\label{eq:defL}
    L_x : \k \to T_xX, \quad  \xi \mapsto \xi_X(x).
  \end{equation}
  The function $L:X \to \Hom(\k, TX) \simeq \Hom(\k, \C^N)$ given by
  $x \mapsto L_x$ is smooth. So, given $u:\Sig \to X$, we obtain a
  section
  $$L(u):=L \circ u \in \Gamma(\Sig, \Hom(\k,u^*TX)) \simeq
  \Gamma(\Sig, \Hom(\k,\C^N)).$$
  The term $(\Theta_i)_{u_i}$ can be seen as a product
  $(\Theta_i)_{u_i}=L(u_i)\Theta_i$. In the following discussion, the
  constant $c$ indicates a constant that is independent of $i$, whose
  value varies across appearances. Since $L$ is smooth,
  $\Mod{L(u_i)}_{H^s(\U_\alpha)}<c$ for all $i$, $\alpha$. By Sobolev
  multiplication (Proposition \ref{prop:sobmult}), for $s > 1$,
  \begin{equation}\label{eq:aumult}
    \Mod{L(u_i)\Theta_i}_{H^s(\U_\alpha)} \leq c\Mod{L(u_i)}_{H^s(\U_\alpha)} \Mod{\Theta_i}_{H^s(\U_\alpha)}.
  \end{equation}
  By holomorphicity \eqref{eq:holo}, $\Mod{\delbar
    u_i}_{H^s(\U_\alpha)} < c$ for all $i$, $\alpha$. By elliptic
  regularity for curves in $\C^N$,
  $$\Mod{u_i}_{H^{s+1}(\U'_\alpha)} \leq c(\Mod{\delbar u_i}_{H^s(\U_\alpha)} + \Mod{u_i}_{L^2(\U_\alpha)}).$$
  where $\ol \U_\alpha' \subseteq \U_\alpha$ and $c$ depends on
  $\U_\alpha$, $\U_\alpha'$. By choosing $\U_\alpha'$ such that they
  cover $\Sig'$, we obtain a uniform bound on
  $\Mod{u_i}_{H^{s+1}(\Sig')} $ and so, after passing to a
  subsequence, $u_i \weakto u_\infty$ in $H^{s+1}(\Sig')$.

  It remains to prove the result for $s=1$. The Sobolev multiplication
  step \eqref{eq:aumult} is not applicable for $s=1$. Instead, we now
  have a bound $\Mod{L(u_i)}_{W^{1,p}(\U_\alpha)}<c$ and then by the
  Sobolev multiplication theorem (Proposition \ref{prop:sobmult}),
  \begin{equation*}
    \Mod{L(u_i)\Theta_i}_{H^1(\U_\alpha)} \leq c\Mod{L(u_i)}_{W^{1,p}(\U_\alpha)} \Mod{\Theta_i}_{H^1(\U_\alpha)}.
  \end{equation*}
  The rest of the proof of Lemma \ref{lem:ghellipreg} is same as the
  $s>1$ case.
\end{proof}

\begin{lemma}{\rm (Regularity for vortices)} \label{lem:vortexreg} Let
  $p>2$. Given a finite energy vortex $(A,u) \in (L^p_{\loc} \times
  C^0)(\C)$ on the trivial bundle $\C \times K$, there is a gauge
  transformation $k \in W^{1,p}_{\loc}(\C)$ such that $k(A,u)$ is smooth
  on $\C$.
\end{lemma}
\begin{proof}
  We first carry out the proof for the closure $\Sig$ of a bounded open subset with smooth boundary in $\C$. The proof is by induction. We first deal with the case when $(A,u)$
  has high enough regularity. We assume $(A,u) \in (W^{m,p} \times
  W^{m+1,p})(\Sig)$, where $mp>2$. The idea of the proof is to find a
  smooth reference connection $A_0$ and put $A$ is Coulomb gauge with
  respect to $A_0$ by a gauge transformation $k \in
  W^{m+1,p}$. Writing $kA=A_0+\alpha$, we have
  $\d_{A_0}^*\alpha=0$. Since $u \in W^{m+1,p}(\Sig)$, $\Phi(ku)$ is
  also in the same class and by the vortex equation $F_{kA} \in
  W^{m+1,p}(\Sig) \hra W^{m,p}(\Sig)$. Writing
  \begin{equation}\label{eq:dareg}
    \d_{A_0}\alpha=F_{kA} - \hh [\alpha \wedge \alpha] \implies
    \d_{A_0}\alpha \in W^{m,p}.
  \end{equation}
  So, we have $\alpha \in W^{m+1,p}_{\on{loc}}(\Sig \bs \partial
  \Sig)$ and so, $kA \in W^{m+1,p}_{\loc}(\Sig \bs \partial \Sig)$. By
  elliptic regularity (similar to proof of Lemma \ref{lem:ghellipreg},
  see \eqref{eq:holo}), we have $W^{m+2,p}_{\loc}$ control over $ku$
  on $\Sig \bs \partial \Sig$. This procedure can be applied
  inductively to gauge transform $(A,u)$ to a smooth vortex on $\Sig
  \bs \partial \Sig$.

  Now, we discuss the case when $(A,u)$ has lower regularity than what
  is required in the previous paragraph, i.e. when $A \in
  L^p_{\on{loc}}$. As earlier, it is possible to gauge transform $A$
  to a connection in Coulomb gauge (see \cite[Theorem 8.3]{Weh:Uh})
  with respect to a nearby connection $A_0$, i.e. there exists $k \in
  W^{1,p}(\Sig,K)$ such that $kA=A_0+\alpha$ and
  $\d_{A_0}^*\alpha=0$. However, the next step \eqref{eq:dareg}
  requires that $mp>2$, which fails when $(A,u) \in L^p_{\loc} \times
  C^0$. The regularity of $\alpha$ is brought up by a bootstrapping
  procedure using the equation in \eqref{eq:dareg}. First assume
  $2<p<4$. Set $q_0:=p$. There exists $m$ and a sequence
  $2<q_0<\dots<q_{m-1}\leq 4$, $q_m>4$ such that
  $$q_0=p,\quad q_i<\frac {2q_{i-1}} {4-q_{i-1}} \quad i\geq 1.$$
  Suppose $\alpha \in L^{q_i}$. Then, by H\"older's inequality
  $[\alpha \wedge \alpha] \in L^{q_i/2}$. Since $ku$ is in $C^0$, the
  term $\Phi(ku)$ is also in $C^0$, and by the vortex equation
  $F_{kA}$ is in $C^0$, hence also in $L^{q_i/2}$. Bootstrapping using
  \eqref{eq:dareg}, we get $\alpha \in W^{1,q_i/2} \hra
  L^{q_{i+1}}$. By repeating this procedure, we end up with $\alpha
  \in L^{q_m}$. If $p>4$, then $m$ is equal to $0$ in the above
  discussion, and we straightaway have $\alpha \in L^{q_m}$.  Using
  \eqref{eq:dareg} one more time, we get $\alpha \in W^{1,q_m/2}$. By
  elliptic regularity (similar to proof of Lemma \ref{lem:ghellipreg},
  see \eqref{eq:holo}), $u \in W^{2,q_m/2}_{\loc}(\Sig \bs \partial
  \Sig).$ Now, the inductive step of the previous paragraph is
  applicable.

The proof extends to all of the complex line via an exhaustion argument carried out in Proposition 77 in \cite{Zilt:QK}, which carries over to this case with lower regularity.
\end{proof}

\subsection{Proof of Theorem \ref{thm:main}\eqref{part:mainthm1}}

We now present the proof of the main result Theorem
\ref{thm:main}\eqref{part:mainthm1} modulo some technical results
which are 
deferred to the following subsections.

\begin{proof} {\rm(Proof of Theorem \ref{thm:main} \eqref{part:mainthm1})}
Starting from a stable gauged holomorphic map, we first produce a
sequence of vortices on exhausting subsets of the complex line, which
has a limit modulo bubbling that is a finite energy affine vortex. Given
a stable gauged holomorphic map $(A,u)$ defined on a principal bundle $P$
over $\P(1,n)$ (resp. $\P^1$), we may assume that $u(\infty) \in
\Phi^{-1}(0)$. Since $u(\infty) \in X^\ss$, this condition can be
achieved by a complex gauge transformation. This preliminary step is
to simplify arguments later in the proof. We now apply Proposition
\ref{prop:seq1} below to
obtain a sequence of vortices $(A_i,u_i)$ defined on the annuli
$A(1/R_i,R_i)$ (resp. balls $B_{R_i}$) that satisfy
$(A_i,u_i)=g_i(A,u)$ where $g_i \in H^2(B_{R_i})$ is a complex gauge transformation. The sequence
converges in the sense of Proposition \ref{prop:seq1} 
 to a finite energy affine
vortex $(A_\infty,u_\infty)$. We remark that from the output of
Proposition \ref{prop:seq1}, we replace $g_ik_i$ by $g_i$ and drop
$k_i$ from our notation in this proof.

  In the remainder of the proof, we will show that the limit vortex is complex gauge equivalent to
  every vortex in the sequence. We first construct the
  necessary complex gauge transformation outside a large ball.  Since
  $(A_\infty,u_\infty)$ has finite energy and bounded image, by Proposition \ref{prop:infvortsing}, $\Phi(u_\infty(z))\to 0$ as
  $z\to \infty$. 
So we can choose an $R>0$ such that $u_\infty(\C \bs B_R) \subseteq
X^{\ss}$ and $Z \subseteq B_R$. After possibly passing to a
subsequence, we have $u_i(\C \bs B_R) \subseteq X^{\ss}$.  
Using Lemma \ref{lem:transfconv}, after passing to a subsequence
again, $g_i$ has a limit $g_\infty \in W^{1,p}_{\on{loc}}$.
In particular, we have
$$g_i \to g_\infty \quad \text{ in $W^{1,p}(Q,G)$ for all compact subsets $Q \subset \C \bs B_R$}.$$

We remark that in the case that the $G$-action on $X^\ss$ is free, the proof of this statement is more
straightforward and does not require Lemma \ref{lem:transfconv}:
Observe that by Assumption \ref{ass:freeaction}, semistable orbits are
relatively closed in $X^\ss$. This implies that for all $x \in \C \bs
B_R$, the points $u_i(x)$ and $u_\infty(x)$ are in the same
$G$-orbit. By the free action, there is a unique $g_\infty$ as
above. 
Further the $W^{1,p}$ convergence of the maps $u_i$ in compact subsets of $\C \bs B_R$ implies a similar convergence of the sequence $g_i$ to $g_\infty$. 
By Lemma \ref{lem:cgaugea}, we have
\begin{equation}\label{eq:glim}
  (A_\infty,u_\infty)=g_\infty(A,u) \quad \text{on $\C \bs B_R$.}
\end{equation}

The limit gauge transformation produced outside the ball
extends to inside the ball. By Proposition \ref{prop:extend_inball},
the gauge transformation $g_\infty$ extends to all of $\C$ while satisfying
$(A_\infty,u_\infty)=g_\infty(A,u)$ on $\C$. The complex gauge transformation 
$g_\infty$ is in $W^{1,p}_{\on{loc}}(\C)$.

We emphasize that the above argument shows there is no bubbling, i.e.
$Z=\emptyset$ and in the orbifold case $0$ is not a singular point. By the conclusion of Proposition \ref{prop:extend_inball}, the sequence $(A_i,u_i)$ which is equal to $g_i(A,u)$ converges to $g_\infty(A,u)$ weakly in $(L^p \times W^{1,p})(B_R)$, and this limit is same as $(A_\infty,u_\infty)$.
 So, the quantity $\Mod{\d_{A_i}u_i}_{L^p(B_R)}$, which is gauge
invariant, is uniformly bounded for all $i$. By Proposition 78 in
\cite{Zilt:QK}, modulo gauge transformations, the sequence $(A_i,u_i)$
converges smoothly to a limit vortex on compact subsets of
$\on{int}(B_R)$. This implies that $|d_{A_i}u_i|_{L^\infty}$ is
uniformly bounded on a small compact neighborhood of $Z$, which contradicts the characterization of $Z$ in Proposition \ref{prop:seq1}. So there is no bubbling, 
that is, the bubbling set $Z$ is empty.

We next show that the complex gauge transformation $g_\infty$ needed
to make the pair into a vortex has the claimed regularity, that is,
$g_\infty$ is $p$-bounded on the bundle $P$ over the weighted projective line.  Using
Lemma \ref{lem:vortexreg}, we can modify $g_\infty$ by a gauge
transformation in $W^{1,p}_{\loc}$, so that $g_\infty(A,u)$ is
smooth. By applying Lemma \ref{lem:complexgreg} in neighborhoods in
$\C$, we conclude that $g_\infty:\C \to G$ is smooth. Write
$g_\infty=k_\infty e^{i\xi_\infty}$. In the beginning of the proof, we assumed $\Phi(u(\infty))=0$. We also have $\lim_{r \to
  \infty}u_\infty(r,\theta) \in \Phi^{-1}(0)$ for any $\theta$. This
implies that $\xi_\infty(r,\theta) \to 0$ as $r \to \infty$. The complex gauge transformation $e^{i\xi_\infty}$ on $P$ is
smooth on $\C$ and continuous at $\infty$. We now show that
$e^{i\xi_\infty}\in \G(P)_{\on{bd}}$. Given the finite energy vortex
$e^{i\xi_\infty}(A,u)$, Theorem \ref{thm:main}\eqref{part:mainthm2}
shows that the vortex extends to a gauged holomorphic map over a
principal $K$-bundle $P' \to \P(1,n)$. (The proof of part
\eqref{part:mainthm2} of the theorem is independent of part
\eqref{part:mainthm1}). The bundles $P$ and $P'$ are isomorphic by the
following argument. The limit $u(\infty,\theta):=\lim_{r \to
  \infty}u(r,\theta)$ is well-defined and satisfies
$u(\infty,\theta)=k(\theta)u(\infty,0)$ for a path $k:[0,2\pi]\to K$,
where $k(0)=\Id$ and $k(2\pi)^n=\Id$. By Remark
\ref{rem:orbibundleinv}, the bundle $P$ is uniquely determined by the
path $k$. The bundle $P'$ is analogously determined by a path given by
$e^{i\xi_\infty}u$, this path is same as $k$. Since
$e^{i\xi_\infty}(A,u)$ is a $p$-bounded gauged holomorphic map on $P$,
$e^{i\xi_\infty}u$ is in $W^{1,p}$ on a neighborhood of $\infty$ in
$\tU_1$, and so the same is true of $\xi_\infty$, therefore
$e^{i\xi_\infty} \in \G(P)_{\on{bd}}$.

To finish the proof Theorem \ref{thm:main} \eqref{part:mainthm1}, it remains to show that the vortex
$e^{i\xi_\infty}(A,u)$ is unique up to gauge transformations. This is
proved separately in Proposition \ref{prop:uniqueaffinevort} using
Theorem \ref{thm:main} \eqref{part:mainthm2}.
\end{proof}

\subsection{Producing a limit modulo bubbling}

The main result of this section is Proposition \ref{prop:seq1}, which
can be described informally as follows.  We choose a sequence of balls, or annuli
in the orbifold case, that exhaust the complex line. Given a stable
gauged holomorphic map on the weighted projective space, there exists
a sequence of gauge transformations that transform the gauged
holomorphic map to a sequence of vortices on these increasing balls
(resp. annuli). This sequence of vortices converges modulo bubbling to
a limit vortex defined on all of the complex line.

\begin{proposition} \label{prop:seq1} {\rm (A limit modulo bubbling)}
  Suppose $(A,u)$ is a stable gauged holomorphic map defined on a
  principal $K$-bunde $P \to \P(1,n)$. Let $p>2$ and $R_i \to \infty$
  be an increasing sequence, with $R_1 \geq 2$. We denote by
  $A(1/R_i,R_i) \subset \C$ an annulus of radii $1/R_i$ and $R_i$.
  \begin{enumerate}
  \item \label{part:seq1_orb1} There exists a sequence of complex
    gauge transformations $g_i \in H^2(B_{R_i}, G)$ such that the
    restriction of $(A_i,u_i):=g_i(A,u)$ to the annulus $A(1/R_i,R_i)$ is a
    vortex whose energy is uniformly bounded. The curvature
    $\Mod{F_{g_iA}}_{L^2(B_{1/R_i})}$ is also uniformly bounded.
 
  \item \label{part:seq1_orb2} Let $Z$ be the set of points $z \in \C
    \bs \{0\}$ for which there is a sequence $z_i \to z$ such that
    $|\d_{A_i}u_i(z_i)| \to \infty$ as $i \to \infty$. Then, the set
    $Z$ is finite and there exists a sequence of gauge
    transformations $k_i\in H^2(B_{R_i},K)$ and a vortex
    $(A_\infty,u_\infty) \in (H^1_\loc \times H^2_\loc)(\C)$ such that
    \begin{enumerate}
    \item[(1)] $k_iA_i$ converges to $A_\infty$ weakly in $H^1$, and
      strongly in $L^p$ on compact subsets of $\C$.
    \item[(2)] $k_iu_i$ converges to $u_\infty$ weakly in $H^2$ and
      strongly in $W^{1,p}$ on compact subsets of $\C\bs (Z\cup
      \{0\})$.
    \end{enumerate}
  \end{enumerate}

  The vortex $(A_\infty,u_\infty)$ has finite energy and bounded
  image.
\end{proposition}

We outline the proof of Proposition \ref{prop:seq1}, starting with the manifold
case. In this case, the stable gauged holomorphic map is defined on
the projective line. We use a sequence of metrics that interpolate
between the Euclidean metric on the affine line and the Fubini-Study
metric. In particular, the sequence of metrics is constructed so that
the sequence of increasing balls in the Proposition respectively have
the Euclidean metric on them. With respect to each metric in the
sequence, one can apply the heat flow result Theorem \ref{thm:hk} to the
given gauged holomorphic map. For most (all but finite) metrics in the
sequence, the limit of the heat flow is a vortex under the respective
metric. The sequence of vortices, when respectively restricted to the
sequence of increasing balls, satisfies the necessary energy bound,
and hence we obtain the convergence in the sense of part
\eqref{part:seq1_orb2} of Proposition \ref{prop:seq1}.

We remark that in the case that the git quotient $X \qu G$ is a
manifold, a slightly weaker result suffices for the proof of Theorem
\ref{thm:main}: The elements of the sequence $(A_i,u_i)$ can be taken
to be vortices on the balls $B_{R_i}$, instead of annuli.  The set $Z$
of bubbling points would then be a subset of $\C$, not of $\C \bs
\{0\}$.  The additional curvature bound
$\Mod{F_{g_iA}}_{L^2(B_{1/R_i}}$ would not be required. In part
\eqref{part:seq1_orb2}, the convergence of the sequence $k_iu_i$ would
be in compact subsets of $\C \bs Z$.

In the orbifold case, we start with a stable gauged holomorphic map
defined on the weighted projective line. The proof differs in this
case because the heat flow does not apply when the base space is an
orbifold. To get around this issue we work on a cover of the weighted
projective line that is ramified at the origin. The cover is
bi-holomorphic to the projective line. Unfortunately, the lift of the
Euclidean metric to this cover degenerates at the ramification
point. We construct a sequence of metrics by interpolating between the Fubini-Study metric near
infinity, a lift of the Euclidean metric on a sequence of increasing
annuli and the Euclidean metric scaled by a factor near the
origin. Repeating the heat flow process produces a sequence of
vortices on the increasing annuli. The sequence converges in the sense
of the Proposition on the complex line punctured at the
origin. Additional technical details are required to remove the
singularity at the origin for the limit vortex.

\begin{definition}
  {\rm (Ramified cover of the orbifold line)}
  \label{def:ramcover} We denote by $\tilde \P$ an $n$-cover of
  $\P(1,n)$ that is ramified at $0$. The cover $\tilde \P$ is
  bi-holomorphic to the projective line $\P^1$ and the covering map is
$$\pi:\tilde\P \to \P(1,n) \quad [x:y] \mapsto [x^n:y].$$ 
\end{definition}

To see that this map is well-defined, recall that $\P(1,n)$ can be
defined as
$$\P(1,n):=(\C^2 \bs \{0\})/\sim, \quad (x,y) \sim (\lambda^n
x,\lambda y) \,\,\forall \lambda \in \C^\times.$$ It is useful to have
chart-based description of $\tilde \P$. Recall $\P(1,n)$ is
constructed using two charts $\tU_1$ and $U_2$ (see
\eqref{eq:charts}). The cover $\tilde \P$ is made up of charts
$\tU_1$, $\tU_2$.  The projection $\pi:\tU_2 \to U_2:=w \mapsto w^n$
is an $n$-cover ramified at $0$. We consider $\pi^*(A,u)$ and perform
all the following steps maintaining symmetry under the $\Z_n$-action.

\begin{figure}
  \centering \scalebox{.5}{ \input{dvolR.pstex_t}}
  \caption{The metric $\dvol_R$ on $\P^1$}
  \label{fig:dvolR}
\end{figure}

Next, we describe a family of metrics on the cover of the weighted
projective line that agrees with a lift of the Euclidean metric on a
sequence of exhausting subsets.  The Euclidean area form on $\C
\subset \P(1,n)$ pulls back to an area form
$$ \dvol_{\orb} = n^2(|x|^2+|y|^2)^{n-1} dx \wedge dy$$
on $\C \subset \tilde \P$. In case $n>1$, the corresponding metric degenerates at
the origin.  We define a metric $\dvol_R$ that is dependent on a parameter $R>0$
(see \eqref{eq:dvolR}) and is equal to
$\dvol_{\orb}$ on the annulus $A(1/R,R)$ of radii $1/R$ and $R$, it is
equal to the Fubini-Study metric in a neighborhood of $\infty$ and a
Euclidean metric in a neighborhood of $0$. The modification near the
origin is to remove the degeneracy of the metric.

\begin{definition} {\rm (A family of metrics on the orbifold line)}
  Let $\eta:\C \to [0,1]$ be a radially symmetric cut-off function
  that is $1$ in the unit ball $B_1$ and $0$ on $\C \bs B_2$. For any
  $R>0$, define a cut-off function
  \begin{equation}\label{eq:cutoff}
    \eta_R:\C \to [0,1]\quad \eta_R(x):=\eta(x/R).
  \end{equation}
  Let $\dvol_{\on{Euc}}$ denote the Euclidean metric on $\C \subset \tilde
  P$ and $\dvol_{\!\mathsmaller{\on{FS}}}$ be the Fubini-Study metric on
  $\tilde \P$. For any $R>2$, define a metric on $\dvol_R$ on $\tilde
  \P$ as
  \begin{equation}\label{eq:dvolR}
    \begin{split}  
      \dvol_{\!\mathsmaller R}&:= (1-\eta_{\!\mathsmaller R})\dvol_{\!\mathsmaller{\on{FS}}} + \eta_{\!\mathsmaller R}(1-\eta_{\!\mathsmaller{1/2R}})\dvol_{\orb} + \eta_{\!\mathsmaller{1/2R}} n^2R^{-2n+2}\dvol_{\on{Euc}}\\
      \nonumber &=\Bigl( \frac {1-\eta_{\!\mathsmaller R}}
      {(1+x^2+y^2)^2} +\eta_{\!\mathsmaller
        R}(1-\eta_{\!\mathsmaller{1/2R}}) n^2(x^2+y^2)^{n-1} +
      \eta_{\!\mathsmaller{1/2R}}n^2R^{-2n+2} \Bigr) dx \wedge dy,
    \end{split}
  \end{equation}
  where $(x,y)$ are Euclidean coordinates on $\C \subset \tilde \P$.
\end{definition}
We remark that in case the git quotient is a manifold, $n=1$, the family of metrics
simplifies to
\begin{equation}
  \label{simp}
  \dvol_{\!\mathsmaller R}:=(1-\eta_{\!\mathsmaller R})\dvol_{\!\mathsmaller{\on{FS}}} + \eta_{\!\mathsmaller R}\dvol_{\on{Euc}}= \left( \frac {1-\eta_{\!\mathsmaller R}} {(1+x^2+y^2)^2} +
    \eta_{\!\mathsmaller R}\right) dx\wedge dy.\end{equation}

The family of metrics described above satisfies
the following uniform bound:

\begin{lemma}
  \label{lem:dvolRbd} Let $\Om$, $B \subset \tilde \P$ be small open
  neighborhoods of $0$ and $\infty$ respectively. Suppose
  $f_{\!\mathsmaller R}:\tilde \P \to \R_{\geq 0}$ be given by the
  relation $\dvol_{\!\mathsmaller R}=f_{\!\mathsmaller
    R}\dvol_{\!\mathsmaller{\on{FS}}}$. There exists a constant $c$ such
  that
  \begin{align*}
    |f_{\!\mathsmaller R}| &\leq c \text{ on } \Om,& c^{-1} &\leq
    |f_{\!\mathsmaller R}| \text{ on } B,& c^{-1} \leq
    |f_{\!\mathsmaller R}|&\leq c \text{ on } \tilde \P \bs (\Om \cup
    B).
  \end{align*}
\end{lemma}
\begin{proof}
  To prove the first relation, suppose $\Om$ is contained in a ball of
  size $r$ about $0$. Consider values of $R$ for which $R>r>\frac 1
  R$. On $\Om$, $\eta_{\!\mathsmaller R}=1$. So, we can write
  \begin{align*}
    \dvol_{\!\mathsmaller R}&:=  (1-\eta_{\!\mathsmaller{1/2R}})\dvol_{\orb} + \eta_{\!\mathsmaller{1/2R}} n^2R^{-2n+2}\dvol_{\on{Euc}}\\
    & =\Bigl((1-\eta_{\!\mathsmaller{1/2R}}) n^2(x^2+y^2)^{n-1} +
    \eta_{\!\mathsmaller{1/2R}}n^2R^{-2n+2} \Bigr) dx \wedge dy.
  \end{align*}
  We introduce another function $g_{\!\mathsmaller
    R}:=f_{\!\mathsmaller R}(1+x^2+y^2)^{-2}$ on $\Om$, so that
  $\dvol_{\!\mathsmaller R}=g_{\!\mathsmaller R} dx \wedge dy$. Then,
  $g_{\!\mathsmaller R} \leq n^2r^{2n-2}$ on $\Om$, therefore there is
  an upper bound on $f_{\!\mathsmaller R}$ also.

  Next, consider the second relation. Suppose $r$ is such that $B$ is
  contained in $\tilde \P \bs B_r$. Consider values of $R$ such that $\frac
  1 R < r$. On $B$, the term $\eta_{\!\mathsmaller 1/2R}$ is $0$, then,
  \begin{align*}
    \dvol_{\!\mathsmaller R}&= (1-\eta_{\!\mathsmaller R})\dvol_{\!\mathsmaller{\on{FS}}} + \eta_{\!\mathsmaller R}\dvol_{\orb}  \\
    &=\Bigl( \frac {1-\eta_{\!\mathsmaller R}} {(1+x^2+y^2)^2}
    +\eta_{\!\mathsmaller R} n^2(x^2+y^2)^{n-1}\Bigr) dx \wedge dy,
  \end{align*}
  Since $\dvol_{\!\mathsmaller{\on{FS}}}$ is smaller than $\dvol_{\orb}$ on
  $B$, we have $f_{\!\mathsmaller R} \geq 1$.

  The last relation follows from the fact that for large enough $R$,
  the metric $\dvol_R$ is equal to $\dvol_{\orb}$ on $\tilde \P \bs
  (\Om \cup B)$.
\end{proof}

The following is a
preparatory
result required to prove the main result of this section (Proposition
\ref{prop:seq1}). It says that given a stable gauged holomorphic map
on a weighted projective line, it can be complex gauge transformed in
a way that its energy is uniformly bounded with respect to the family
of metrics described above.

\begin{proposition} \label{prop:prepare} Let $(A,u)$ be a gauged
  holomorphic map defined on a principal bundle $P \to \P(1,n)$, and
  suppose $u(\infty) \in X^{\ss}$. Then there exists a smooth complex
  gauge transformation $g \in \G(P)$ such that
  \begin{equation}\label{eq:preparebd}
    \sup_R E_R(\pi^*(g.(A,u)))<\infty. 
  \end{equation}
  Here $\pi:\tilde \P \to \P(1,n)$ is the projection and $E_R$ is the
  energy of the gauged holomorphic map with respect to the metric
  $\dvol_R$ on $\tilde \P$.
\end{proposition}
\begin{proof}
  We first find a smooth $g \in \G(P)$ such that
  \begin{align}\label{eq:preparecond}
      F_{gA} &\equiv 0  \text{ in a neighborhood of $0$,}&
      \Phi(gu) &\equiv 0 \text{ in a neighborhood of $\infty$.}
  \end{align}
  For the first condition, we use Theorem \ref{thm:YMbdry} on a
  neighborhood $\Om$ of $0$ in $\P(1,n)$. This produces a unique
  element $s:\Om \to P(\k)$ satisfying $s|_{\partial \Om}\equiv 0$
  such that $e^{is}A$ is a flat connection on $\Om$. The element $s$
  is smooth by Lemma \ref{lem:complexgreg}.  By taking $\Om$ to be a
  neighborhood of $0$, and using a cut-off function, we can produce
  $g_1 \in \G(P)$ that makes the connection flat in a neighborhood of
  $0$ and $g_1$ is identity away from this neighborhood.  For the
  second condition $\Phi(gu)=0$, observe that there is a neighborhood
  $B$ of $\infty$ in $\P(1,n)$ that is mapped by $u$ to $X^\ss$. For
  any $x \in X^\ss$, there is a unique $s \in \k$ such that
  $\Phi(e^{is}x)=0$, and $s$ varies smoothly with $x$. Let $\tilde
  B:=\pi^{-1}(B) \subset \tU_1$ be the lift of $B$. We work with the
  trivialization of $P$ over $\tU_1$ as in
  \eqref{eq:znsymmetry}. There exists $\xi:\tilde B \to \k$ such that
  $\Phi(e^{i\xi}u)\equiv 0$ on $\tilde B$. Recall $u$ satisfies
  $\Z_n$-equivariance $u \circ \sig_n=\mu u$. The uniqueness of $\xi$
  implies
  \begin{equation} \label{eq:xisym} \xi \circ \sig_n = \Ad_\mu \xi.
  \end{equation}
  By using a cut-off function, $\xi$ can be extended to all of $\tU_1$
  so that it vanishes away from $\tilde B$ and still satisfies the
  symmetry relation \eqref{eq:xisym}.  The element $g:=g_1e^{i\xi}$
  satisfies the conditions in \eqref{eq:preparecond}, by construction.

  Using Lemma \ref{lem:dvolRbd}, we compare the $R$-dependent metric
  with the Fubini-Study metric.  Conformally rescaling the metric by a
  scalar-valued function greater than one has the effect of decreasing
  the $L^2$ norm of the two-form $F_A$ and increasing the $L^2$ norm
  of the zero-form $\Phi(u)$. The $L^2$ norm of the one-form $\d_Au$ is
  independent of the metric.  By \eqref{eq:preparecond} we have that
  for any $R$,
  $$ E_R(\pi^*(g.(A,u)))< c E_{\on{FS}}(\pi^* (g.(A,u)) $$
  where $E_{\on{FS}}$ denotes the energy defined using the Fubini-Study
  metric.  Hence the complex gauge transformation $g$ satisfies the
  bound \eqref{eq:preparebd}.
\end{proof}
\begin{proof}{\rm (Proof of Proposition \ref{prop:seq1}
  when the git quotient is a manifold)}
  The proof of the first part
  of the proposition is by applying the heat flow Theorem \ref{thm:hk}
  on the given stable gauged holomorphic map with respect to a
  sequence of metrics described in \eqref{simp}.  Given a gauged
  holomorphic map $(A,u)$ on a principal bundle $P \to \P^1$ that
  satisfies $u(\infty) \in X^\ss$, by Proposition \ref{prop:prepare}
  we can assume
  \begin{equation}\label{eq:energyRbd}
    E_R(A,u)<e_0 \quad \forall R>0,
  \end{equation}
  where $E_R$ is the energy with respect to the metric $\dvol_R$. As
  $R$ increases, the volume $\vol_R(\P^1)$ increases to infinity. So,
  there exists $r_0$ such that $E_R(A,u) \leq e_0 \leq
  c_0^2\vol_R(\P^1)$ for all $R\geq r_0$, where $c_0$ is as defined in
  \eqref{eq:c0def}. By dropping some initial terms of the given
  sequence $\{R_i\}$, we have $R_i \geq r_0$ for all $i$. Theorem
  \ref{thm:hk} is applicable for the gauged holomorphic map $(A,u)$ on
  $\P^1$ with respect to the metric $\dvol_{R_i}$. By Theorem
  \ref{thm:hk}, $(A,u)$ is complex gauge equivalent to a vortex
  $(A_i,u_i)$ with respect to the metric $\dvol_{R_i}$ via a complex
  gauge transformation $g_i \in \G(P)_{W^{2,p}}$. On the ball
  $B_{R_i}$, $(A_i,u_i)$ is a vortex with respect to the Euclidean
  metric.  By modifying each $g_i$ by a gauge transformation on
  $B_{R_i}$, we may assume that $(A_i,u_i)$ is smooth on $B_{R_i}$ (by
   \cite[Theorem 3.1]{CGMS}). This finishes the proof of part
  \eqref{part:seq1_orb1} of the Proposition \ref{prop:seq1}.

  We now have a sequence of gauged holomorphic maps on the projective
  line whose restriction to a sequence of exhausting subsets of the
  complex line is a vortex with respect to the Euclidean metric. We
  show that the restrictions of the sequence of maps on these subsets
  satisfy the hypothesis for the Gromov convergence result (Lemma
  \ref{lem:vortconv}).  The gauged maps $(A_i,u_i)$ are all homotopic
  to each other, so they have the same equivariant homology class $[u]
  \in H_2^K(X)$.  Then, by \cite[Lemma 2.7]{CGMS}, there is a
  compact set $S \subset X$ that contains the images of all the
  $u_i$. The pair $(A_i,u_i)$, when restricted to $B_{R_i}$, is a
  vortex with respect to the Euclidean metric. It satisfies
  $E(A_i,u_i,B_{R_i}) \leq E_{R_i}(A_i,u_i,\P^1) \leq
  E_{R_i}(A,u,\P^1) \leq e_0$. The first inequality follows from the
  definition of the metric $\dvol_{R_i}$, the second one from the fact
  that heat flow decreases energy (see Lemma \ref{lem:energytop}) and
  the last one follows from \eqref{eq:energyRbd}.

  The proof of the Proposition can be completed by applying Gromov
  convergence for vortices (Lemma \ref{lem:vortconv}).  The sequence
  $\{(A_i,u_i)\}$ is a sequence of vortices on $B_{R_i}$, which
  exhaust $\C$. The sequence satisfies an energy bound, so Lemma
  \ref{lem:vortconv} is applicable.  So, there is a subsequence of
  $(A_i,u_i)$ (still denoted by the same subscripts), a sequence of
  gauge transformations $k_i \in H^2(B_{R_i})$, a finite set $Z
  \subseteq \C$ and a finite energy vortex $(A_\infty,u_\infty)$ on
  $\C$ so that $k_iA_i \weakto A_\infty$ in $H^1$ on compact subsets
  of $\C$ and $k_iu_i \weakto u_\infty$ in $H^2$ on compact subsets of
  $\C \bs Z$. 
\end{proof}

\begin{proof}{\rm (Proof of Proposition \ref{prop:seq1} when the git quotient is an orbifold.)}
  We work with a lift of the given stable gauged holomorphic map to
  the ramified cover of the weighted projective line. We first state
  some symmetry properties of this lift. Suppose $(A,u)$ is the given
  gauged holomorphic map $(A,u)$ on a principal bundle $P \to
  \P(1,n)$. The lift of this map to the bundle $\pi^*P \to \tilde \P$
  is denoted $(\tilde A, \tilde u)$. A trivialization of the bundle $P
  \to \P(1,n)$ on the charts $\tU_1$, $U_2$ (see \eqref{eq:equivbund})
  lifts to a trivialization of $\pi^*P \to \tilde \P$ on $\tU_1$ and
  $\tU_2$. We fix such a trivialization.  On the chart $\tU_2 \simeq
  \C \subset \tilde \P$, the lift $(\tilde A, \tilde u)$ is symmetric
  under the $\Z_n$-action, or in other words $(\tilde A, \tilde u)$ is
  preserved by $\frac {2\pi} n$-rotations of the domain. On the
  trivialization over $\tU_1 \simeq \tilde \P \bs \{0\}$, we have
  $\sig_n^*(\tilde A, \tilde u) = \mu (\tilde A, \tilde u)$.

  We now produce a sequence of vortices on a sequence of exhausting
  annuli. We also prove energy bounds in this sequence so that Gromov
  convergence is applicable. We define another increasing sequence
  $\tilde R_i:=R_i^{1/n}$, and remark that the annulus $A(1/R_i,R_i)$
  in $\P(1,n)$ lifts to the annulus $A(1/\tilde R_i,\tilde R_i)$ in
  $\tilde \P$.  By Proposition \ref{prop:prepare}, we can assume
  $$E_{\tilde R}(\tilde A,\tilde u, \tilde \P)<e_0 \quad \forall \tilde R>0,$$ 
  where $E_{\tilde R}$ denotes the energy with respect to the
  $\dvol_{\tilde R}$ metric. As in the case when the git quotient is a
  manifold, after dropping some terms in the sequence $\{\tilde
  R_i\}$, we may assume that $E_{\tilde R_i}(\tilde A,\tilde u) \leq
  e_0 \leq c_0^2\vol_{\tilde R_i}(\tilde \P)$, which ensures that
  Theorem \ref{thm:hk} is applicable. Using Theorem \ref{thm:hk}, we
  get a sequence of complex gauge transformations $e^{i\tilde
    \xi_i}\in W^{2,p}(\G(\pi^*P))$ such that $e^{i\tilde \xi_i}(\tilde
  A,\tilde u)$ are $\dvol_{\tilde R_i}$-vortices on $\tilde \P$. The
  starting energy $E_{\tilde R_i}(A,u)$ is bounded and heat flow
  decreases energy (see Lemma \ref{lem:energytop}). Therefore, the
  elements in the sequence $e^{i\tilde \xi_i}(\tilde A,\tilde u)$ have
  bounded energy with respect to the metric $\dvol_{\tilde R_i}$.
  Further, the gauged maps $e^{i\tilde \xi_i}(\tilde A, \tilde u)$ are
  homotopic to each other, so by \cite[Lemma 2.7]{CGMS}, there is a
  compact set $S \subset X$ that contains the images of the maps
  $e^{i\tilde \xi_i}\tilde u$.  By Theorem \ref{thm:hk}, the element
  $\tilde \xi_i$ is unique, so the complex gauge transformation
  $e^{i\tilde \xi_i}$ is symmetric under the $\Z_n$ action, and
  descends to the weighted projective line, we denote
  $\xi_i:=\pi_*\tilde \xi_i$. The same is therefore true of the pair
  $e^{i\tilde \xi_i}(\tilde A,\tilde u)$, and it descends to the pair
  $e^{i\xi_i}(A,u)$ defined on $\P(1,n)$. The metric $\dvol_{\tilde
    R_i}$ descends to the Euclidean metric on the annulus
  $A(1/R_i,R_i) \subset \P(1,n)$, so $e^{i\xi_i}(A,u)$ is a vortex
  with respect to the Euclidean metric on this annulus with an energy
  bound:
  \begin{equation}\label{eq:eucenergy1}
    E(e^{i\xi_i}(A,u); A(1/R_i,R_i))=\frac 1 n E_{\dvol_{\tilde R_i}}(e^{i\tilde \xi_i}(\tilde A,\tilde u);A(1/\tilde R_i, \tilde R_i)) \leq  e_0/n.
  \end{equation}
  However, in the neighborhood of the origin, we do not have control
  over the $L^2$ norm of the curvature with respect to the Euclidean
  metric. We apply Proposition \ref{prop:almostvortexnear0} to the
  sequence $e^{i\tilde \xi_i}(\tilde A,\tilde u)$ on $\tilde \P$. This
  gives a sequence of $H^2$ complex gauge transformations $e^{i\tilde s_i}$
  on $\tilde \P$ supported in the balls $B_{1/\tilde R_i}$ that are
  $\Z_n$-symmetric and satisfy the following: if $s_i:=\pi_*\tilde
  s_i:\C \to \k$, then the curvature norms
  $\Mod{F_{e^{is_i}e^{i\xi_i}A}}_{L^2}$ are uniformly bounded. Part
  \eqref{part:seq1_orb1} of Proposition \ref{prop:seq1}
   is proved by the complex gauge transformations
  $g_i:=e^{is_i}e^{i\xi_i}:B_{R_i} \to G$. Set $(A_i,u_i):=g_i(A,u)$
  on $B_{R_i}$.  Since $s_i$ is supported in $B_{1/R_i}$, on the
  annulus $A(1/R_i,R_i)$, $(A_i,u_i)=g_i(A,u)$ is still a
  vortex that satisfies
 \begin{equation}
   \label{eq:eucenergy2}
   E(A_i,u_i;A(1/R_i,R_i)) \leq c, \quad \Mod{F_{A_i}}_{L^2(B_{1/R_i})} \leq c.
 \end{equation}

 We apply Gromov convergence on the sequence of vortices constructed
 above that are defined on a sequence of exhausting annuli. Combining the equations in \eqref{eq:eucenergy2}, we get a uniform bound on the energy
 of the connections $\Mod{F(A_i)}_{L^2(B_{R_i})}^2$. By Uhlenbeck compactness for non-compact domains (
 \cite[Theorem A']{Weh:Uh}), after passing to a subsequence, there is a sequence
 of gauge transformations $k_i \in H^2(B_{R_i})$ and a connection
 $A_\infty$ on the trivial bundle $\C \times K$ such that $k_iA_i
 \weakto A_\infty$ in $H^1$ in compact subsets of $\C$. We apply Lemma
 \ref{lem:vortconv} to the sequence of vortices on domains $B_{R_i}\bs
 B_{1/R_i}$.  The Lemma implies that there is a finite set $Z \subset
 \C \bs \{0\}$ and a finite energy limit vortex $(A_\infty,u_\infty)$,
 where $A_\infty$ is the limit connection found earlier and $u_\infty$
 is defined on $\C \bs \{0\}$ such that $k_iu_i \weakto u_\infty$ in
 $H^2$ on compact subsets of $\C \bs (Z \cup \{0\})$. 
    
 To finish the proof of the Proposition, it remains to remove the
 singularity of the limit map at the origin. Choose a small positive
 number $r>0$. The pair $(A_\infty,u_\infty)$ is a vortex on $B_r \bs
 \{0\}$. Since the image $k_iu_i(B_r \bs B_{R_i})$ is contained in a compact set $S
 \subset X$ for all $i$, the image of $u_\infty$ is also contained in
 $S$. By Lemma \ref{lem:toflat}, there is a complex gauge
 transformation $g \in H^2(B_r)$ such that $gA_\infty$ is the trivial
 connection. Complex gauge transformations preserve holomorphicity, so
 $\delbar(gu_\infty)=0$ on $B_r \bs \{0\}$ and the image
 $gu_\infty(B_R \bs \{0\})$ is contained in a compact set. Further
 $gu_\infty$ is smooth. By the removable singularity theorem of
 complex analysis, the map $gu_\infty$ extends smoothly over $0$, and
 hence $u_\infty$ extends over $0$ as a $H^2$ map.
\end{proof}

\subsection{Finding complex gauge transformations to relate nearby vortices}

Given a sequence of converging vortices on a ball in the complex
plane that are in the same complex gauge equivalence class, we
address the question of whether the limit modulo bubbling of the sequence is also
in the same complex gauge equivalence class. As part of the
hypothesis, we are given that the limit is in the same complex
gauge equivalence class in an annulus. In particular the limit is
determined by a limit complex gauge transformation. In the following
proposition we extend the limit complex gauge transformation to the
entire ball. If one naively tries to take the limit of complex gauge
transformations generating the sequence of vortices, one is obstructed
by (a) the bubbling points (represented by $Z$ below) and (b) points in a semistable orbit coverging to an unstable point in the closure of the orbit.

\begin{proposition} \label{prop:extend_inball} Let $0<r<R$ and 
$B_r \subset B_R \subset \C$ be open balls of radii $r$ and $R$.  Let $Z \subset B_r$ be a finite
  set and $(A_i,u_i)$ be a sequence of vortices in class $(L^p \times
  W^{1,p})(B_R)$ that satisfy the following.
  \begin{enumerate}
  \item The sequence $(A_i,u_i)$ converges to a vortex
    $(A_\infty,u_\infty) \in (L^p \times W^{1,p})(B_R)$ in the
    following sense:
    \begin{align}\label{eq:convsense}
      A_i &\xrightarrow{L^p(B_R)} A_\infty,& u_i
      &\xrightarrow{W^{1,p}(Q)} u_\infty \text{ for all compact
        subsets $Q \subset B_R \bs Z$}.
    \end{align}
  \item There are complex gauge transformations $g_i \in
    W^{1,p}(B_R,G)$ such that $g_i(A_0,u_0)=(A_i,u_i)$.
  \item There is also a complex gauge transformation $g_\infty:\ol B_R
    \bs B_r \to G$ such that $g_i$ converges to $g_\infty$
    in $W^{1,p}(B_R \bs B_r)$ and $(A_\infty,
    u_\infty)=g_\infty(A_0,u_0)$ on $\ol B_R \bs B_r$.
  \item For all $i$, including $i=\infty$, $u_i(\ol B_R \bs
    B_r) \subset X^\ss $.
  \end{enumerate}
  Then, the complex gauge transformation $g_\infty$ can be defined on
  all of $\ol B_R$ in a way that $(A_\infty,u_\infty)=g_\infty(A_0,u_0)$ and the sequence $g_i$ converges to $g_\infty$ weakly in $W^{1,p}(B_R)$.
\end{proposition}
\begin{proof}
  In Step 1 and 2 of the proof, we adjust the sequence of vortices,
  via a sequence of small complex gauge transformations, to produce a
  new sequence of vortices that are equal to each other on the
  boundary upto unitary gauge equivalence. Then, in Step 3, we show that
  the vortices in the new sequence, which still has the same limit, are related to each other by unitary gauge
  transformations. Finally, in Step 4, by a gauge theoretical argument, we find a
  limit of the unitary gauge transformations, that indeed corresponds
  to the limit vortex. Let $N(\partial B_R)$ denote a closed neighborhood of $\partial B_R$ disjoint from $\ol B_r$.

  {\sc Step 1}: {\em There exists a sequence of complex gauge
    transformations $g_i' \in W^{1,p}(B_R,G)$, each of whose terms is
    respectively equal to $g_\infty g_i^{-1}$ on $N(\partial B_R)$
    and is equal to $\Id$ on $B_r$.  Further $g_i' \to \Id$ in
    $W^{1,p}(B_R,G)$. Consequently, the gauged holomorphic maps
    $(A_i',u_i'):=g_i(A_i,u_i)$ are equal to $(A_\infty,u_\infty)$ on
    $N(\partial B_R)$ and $(A_i',u_i')$ is equal to $(A_i,u_i)$ on $B_r$.}\\
  By the hypothesis of the proposition, $g_\infty g_i^{-1}$ converges
  to $\Id$ in $W^{1,p}(B_R \bs B_r)$. So, for large $i$, the image
  of $g_\infty g_i^{-1}$ is contained in a neighborhood of $\Id$ where the exponential map
  $e:\g \to G$ is injective. This means we can write $e^{\zeta_i}=g_\infty
  g_i^{-1}$, where $\zeta_i:\ol B_R \bs B_r \to \g$. Let
  $\eta:\ol B_R \to [0,1]$ be a cut-off function that is equal to $1$ in a
  neighborhood of $\partial B_R$ and is equal to $0$ on $B_r$. The
  required complex gauge transformations are $g_i':=e^{i\eta
    \zeta_i}$.

  {\sc Step 2}: {\em There exists a sequence $\xi_i \in W^{1,p}(B_R,\k)$ so that $\xi_i|_{\partial B_R} =0$ and the gauged holomorphic maps
    $(A_i'',u_i''):=e^{i\xi_i}(A_i',u_i')$ are vortices on
    $\ol B_R$. Further $\xi_i \to 0$ in $W^{1,p}(B_R,\k)$.}\\
  This result is obtained by applying Proposition \ref{prop:orbitvortex} on the
  sequence $(A_i',u_i')$ defined on $\ol B_R$. First we check the hypothesis. The
  convergence of $(A_i,u_i)$ in \eqref{eq:convsense} together with the
  condition $g_i' \to \Id$ in $W^{1,p}(B_R,G)$ guarantees that the
  following is true:
  \begin{align}\label{eq:convsenseprime}
    A_i' &\xrightarrow{L^p(B_R)} A_\infty,& u_i'
    &\xrightarrow{W^{1,p}(Q)} u_\infty \text{ for all compact subsets
      $Q \subset \ol B_R \bs Z$}.
  \end{align}
  We also have $F_{A_i',u_i'}$ (recall terminology from
  \eqref{eq:vortex}) converges to $0$ in $W^{-1,p}$. This is because $(A_i',u_i')$ is
  a vortex on $B_r$ and on $\ol B_R \bs B_r$, the convergence follows
  using \eqref{eq:convsenseprime} and the fact that $Z \subset
  B_r$. Step 2 is proved by the conclusions of Proposition
  \ref{prop:orbitvortex}.

  {\sc Step 3}: {\em The vortices in the sequence $(A_i'',u_i'')$
    are unitary gauge equivalent to each other.}\\
  The vortices $(A_i'',u_i'')$ are complex gauge equivalent to each
  other, in particular, on $\ol B_R$,
  $$(A_i'',u_i'')=h_i(A_0,u_0), \quad \text{ where }h_i:=e^{i\xi_i}g_i'g_i \in W^{1,p}(B_R,G).$$
  By the
  construction of $g_i'$ in Step 1, we know that $g_i'g_i$ is equal to
  $g_\infty$ in $N(\partial B_R)$. Together with the fact
  that $\xi_i|_{\partial B_R}=0$, we can say that
  $h_ih_j^{-1}(\partial B_R) \subset K$ for any $i$, $j$. But then,
  by Proposition \ref{prop:atmost1vortex}, the vortices are gauge
  equivalent, in particular $h_ih_j^{-1}(\ol B_R) \subset K$ for all $i$,
  $j$.

  {\sc Step 4}: {\em Finishing the proof.}\\
  First, we claim that $A_i'' \to A_\infty$ in $L^p(B_R)$. This
  follows from the convergence of $A_i$, $g_i'$ and $\xi_i$ in the
  spaces $L^p$, $W^{1,p}$ and $W^{1,p}$ respectively, and by using the
  continuous action of $\G(P)$ on $\A(P)$ (see Lemma
  \ref{lem:gcactona}). We can write $(A_i'',u_i'')=h_i'(A_0'',u_0'')$
  and we know from Step 3 that $h_i'=h_ih_0^{-1}$ and that it is a
  unitary gauge transformation. By a standard gauge theoretic argument
  (Lemma \ref{lem:hausquot}), after passing to a
  subsequence $h_i' \weakto h_\infty'$ in $W^{1,p}(B_R)$ (and
  strongly in $C^0$), $A_\infty = h_\infty' A_0''$. This implies that
  the sequence $u_i''$, which is same as $h_i'u_0''$, converges to
  $h_\infty 'u_0''$ weakly in $W^{1,p}(B_R)$.  Also, $u_i''$
  converges in $W^{1,p}$ to $u_\infty$ on compact subsets of $\ol B_R \bs
  Z$, so $u_\infty=h_\infty' u_0''$. Therefore, on $\ol B_R$, using Figure \ref{fig:cgauge},

  \begin{equation*}
    (A_\infty,u_\infty)=h_\infty'(A_0'',u_0'')=h_\infty'e^{i\xi_0}g_0'g_0(A_0,u_0).
  \end{equation*}
  To finish the proof, we show that the complex gauge transformation $h_\infty'e^{i\xi_0}g_0'g_0$ is the weak $W^{1,p}(B_R)$ limit of the sequence $g_i$. Working in the weak $W^{1,p}(B_R)$-topology, we can write
$$h_\infty'e^{i\xi_0}g_0'g_0=\lim_i h_i'e^{i\xi_0}g_0'g_0 =\lim_i (e^{i\xi_i}g_i'g_i)h_0^{-1}e^{i\xi_0}g_0'g_0 = \lim_i (e^{i\xi_i}g_i'g_i).$$
Since the terms $g_i'$ and $e^{i\xi_i}$ converge to $\Id$ in $W^{1,p}(B_R)$, we have proved that $h_\infty'e^{i\xi_0}g_0'g_0$ is the weak $W^{1,p}(B_R)$-limit of the sequence $g_i$. So, $g_\infty:=h_\infty'e^{i\xi_0}g_0'g_0$ is an extension of the limit complex gauge transformation to all of $B_R$ and $g_\infty(A_0,u_0)=(A_\infty,u_\infty)$.
\begin{figure}
  \centering { \input{diag.pstex_t}}
  \caption{Complex gauge transformations used in proof of Proposition \ref{prop:extend_inball}.}
  \label{fig:cgauge}
\end{figure}
\end{proof}

\subsection{Some results involving complex gauge transformations}

The following sequence of Lemmas \ref{lem:transfconv} -
\ref{lem:complexgreg} were used in the proof of Theorem \ref{thm:main}
\eqref{part:mainthm1}. Lemma \ref{lem:transfconv} says that a sequence of
converging complex gauge equivalent maps to the semistable locus, after passing to a
subsequence, is generated by a converging sequence of complex gauge
transformations.
\begin{lemma}{\rm (Converging maps are related by converging complex gauge transformations)}\label{lem:transfconv}
  Suppose $\Sig \subseteq \C$ be the closure of a bounded open subset with smooth boundary and let maps $u_i :\Sig
  \to X^{\ss}$ converge in $W^{m,p}$ to $u_\infty:\Sig \to X^{\ss}$
  for some $m$, $p$ satisfying $mp>2$. Assume further that the maps
  are related by complex gauge transformations i.e. there exist $g_i \in W^{m,p}(\Sig,G)$ such that $u_i=g_iu_0$. Then, after passing to a subsequence $g_i$ converges to a limit $g_\infty$ in $W^{m,p}$ and $g_\infty u_0=u_\infty$.
\end{lemma}

\begin{proof}
  Locally, the Lemma is proved by using a slice of the group action on the target space.
  Let $x \in X \qu G$ and $y \in \pi_G^{-1}(x)$.
 We use a $G_y$-invariant slice
  $V \subset X^\ss$ containing $y$ such that there is a diffeomorphism
  $G \times_{G_y} V \to GV$. So, $G \times V \to GV$ is a
  $|G_y|$-cover.  Suppose $U \subset \Sig$ is such that $u_\infty(U)
  \subset V$, $u_i(U) \subset V$ for large $i$. Choose lifts
  $\tu_i=(\tilde g_i,v_i):U \to G \times V$ so that $\tilde g_i^{-1}\tilde
  g_j=g_i^{-1}g_j|_U$. Since $u_i \to u_\infty$, a subsequence of $\tu_i$
  converges in $W^{m,p}_{\on{loc}}$. (This is because: $G \times V \to GV$ is a
  local diffeomorphism and has smooth inverses locally.) So, $\tilde g_i$ converges and
  consequently $g_i$ converges to a limit $g_\infty:U \to G$. The set $\Sig$ can be covered
  by a countable number of sets of the form of $U$. Working with the chosen cover of
  $\Sig$, by successively passing to subsequences and using a diagonalization argument, we obtain a limit
  $g_\infty$ on all of $\Sig$. By the smooth action of $G$ on $X^\ss$, we get $u_\infty=g_\infty u_0$.
\end{proof}

The next Lemma shows that for two gauged holomorphic pairs whose
image lies in the semistable locus, if the maps are related by a
complex gauge transformation, then the same is true of the connections
also.
\begin{lemma}\label{lem:cgaugea} Let $\Sig \subset \C$ and $(A_i,u_i)$
  be vortices on $\Sig$ for $i=1,2$ such that $u_i(\Sig) \subset
  X^\ss$. Suppose there is a complex gauge transformation $g:\Sig \to
  G$ such that $gu_1=u_2$. Then, $gA_1=A_2$.
\end{lemma}
\begin{proof} The proof relies on the free infinitesimal action of the group on the target space of the maps.
  \begin{align}\label{eq:disjact}
    \k_x \cap J\k_x=\{0\}.
  \end{align}
  Let $a:=gA_1 - A_2$. Since both $(gA_1,u_2)$ and $(A_2,u_2)$ are
  holomorphic, we have $\delbar_{gA_1}u_2-\delbar_{A_2}u_2=a_{u_2}^{0,1}=0$. Write $a=a_xdx+a_ydy$,
  $a_x$, $a_y:\Sig \to \k$. Then, $a_x(u_2)+Ja_y(u_2)=0$ and by
  \eqref{eq:disjact}, $a=0$ on $\Sig$.
\end{proof}

\begin{lemma}
  \label{lem:complexgreg} {\rm (Regularity of complex gauge
    transformations)}
  Let $k \in \Z_{\geq 0}$ and $p>1$ be such that $kp>2$. Let $\Sig$ be a compact Riemann surface and $P \to \Sig$ be a
  principal $K$-bundle, and let $P_\C:=P \times_K G$ be the associated $G$-bundle. Suppose
  $A \in \A(P)$ is a smooth connection and $g \in \G(P)_{k,p}$ a
  complex transformation such that $gA$ is also smooth. Then $g$ is
  smooth.
\end{lemma}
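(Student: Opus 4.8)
The plan is to reduce to a matrix group, extract a first-order $\delbar$-equation for $g$ with smooth coefficients, and then bootstrap by interior elliptic regularity. Since smoothness is a local property, I may work in a coordinate chart of $\Om$ over which $P$ is trivialized.

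First I would realize everything inside a general linear group. Choose a faithful unitary representation $K \hra U(n)$ (Peter--Weyl), so that $G \hra GL_n(\C)$ as a closed complex subgroup, and form the associated Hermitian vector bundle $E = P\times_K\C^n$. The smooth connection $A$ induces a smooth covariant derivative on $E$ and hence a smooth Dolbeault operator $\delbar_A = \pi^{0,1}\d_A$; likewise $gA$ induces the smooth operator $\delbar_{gA}$. The complex gauge transformation $g$ acts as a bundle automorphism of $E$, locally a $GL_n(\C)$-valued function. Because $W^{2,p}\hra C^0$ for $p>2$ and $G\subset GL_n(\C)$ is an embedded submanifold, it suffices to prove that $g$ is smooth as a $GL_n(\C)$-valued map; smoothness into $G$ then follows, the two notions of regularity being related by smooth bundle isomorphisms.

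Next I would write down the governing equation. By construction the action of $\G(P)$ on $\A(P)$ is the one induced from the action of the complexified gauge group on Dolbeault operators, namely $\delbar_{gA}\circ g = g\circ\delbar_A$ (consistent with the infinitesimal formula $i\xi\cdot A = -{*}\d_A\xi$ recorded earlier, whose $(0,1)$-part is $-i\delbar_A\xi$). In a local holomorphic trivialization write $\delbar_A=\delbar+\alpha$ and $\delbar_{gA}=\delbar+\beta$, where $\alpha,\beta$ are smooth $\End(\C^n)$-valued $(0,1)$-forms because $A$ and $gA$ are smooth. Applying $\delbar_{gA}\circ g = g\circ\delbar_A$ to the constant basis sections and cancelling the common term $g\,\delbar s$ yields the first-order equation
\[ \delbar g = g\alpha - \beta g. \]
This is an elliptic $\delbar$-equation for the matrix-valued function $g$, with smooth coefficients, whose right-hand side is linear in $g$; note that it is free of $g^{-1}$.

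Finally I would bootstrap. Starting from $g\in W^{2,p}$, the right-hand side $g\alpha-\beta g$ lies in $W^{2,p}$ (multiplication of a $W^{2,p}$ function by a smooth form), so interior elliptic regularity for $\delbar$ gives $g\in W^{3,p}_{\loc}$; feeding this back, the right-hand side is in $W^{3,p}_{\loc}$, hence $g\in W^{4,p}_{\loc}$, and inductively $g\in W^{k,p}_{\loc}$ for every $k$. By Sobolev embedding $g\in C^\infty$. The main obstacle is the middle step: one must verify that the intertwining identity $\delbar_{gA}\circ g = g\circ\delbar_A$, and hence the equation $\delbar g=g\alpha-\beta g$, holds in the weak (distributional) sense for a merely $W^{2,p}$ gauge transformation, so that elliptic regularity may legitimately be invoked before smoothness is known. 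Since $W^{2,p}\hra C^0$, all the products appearing are well defined and the $g^{-1}$-free form of the identity makes sense weakly; the reduction to $GL_n(\C)$ must likewise be checked to respect the equivalence between regularity of $g$ as a section of $P\times_K G$ and regularity as a bundle map of $E$.
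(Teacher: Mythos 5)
Your proof is correct and takes essentially the same route as the paper: the paper likewise starts from the intertwining relation $\delbar_{gA} = g \circ \delbar_A \circ g^{-1}$, rewrites it as the $g^{-1}$-free linear equation $a^{0,1}g = -\delbar_A g$ with $a := gA - A$ smooth (which in a local trivialization is exactly your $\delbar g = g\alpha - \beta g$), and concludes by elliptic bootstrapping. The only difference is presentational — you make explicit the reduction to $GL_n(\C)$ via a faithful representation and the weak-sense validity of the identity, while the paper works directly on $P_\C$ and leaves these routine points implicit.
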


\begin{proof} We use the relation on $P_\C$: $\delbar_{gA}=g \circ
  \delbar_A \circ g^{-1}$. The difference $a:=gA-A$ is smooth and
  \begin{align*}
    \delbar_{gA}-\delbar_A=a^{0,1}=g\delbar_A (g^{-1})=-(\delbar_A g)
    g^{-1}.
  \end{align*}
  Therefore $a^{0,1}g=-\delbar_A g$. The smoothness of $g$ follows by
  elliptic bootstrapping.
\end{proof}

\subsection{A technical result}

The result presented in this section is required only in the case of
an orbifold
git quotient and is used in the proof of Proposition \ref{prop:seq1}
above. In transforming a stable gauged holomorphic map to an affine
vortex, we recall that our first step was to produce a sequence of
vortices on a sequence of exhausting annuli. This was done using heat
flow on a ramified cover of the weighted projective line. The lift of
the Euclidean metric on the affine line degenerates at the
ramification point, which is chosen to be $0$. So the sequence of
metrics used to run the heat flow is altered near the origin, the new
sequence is called $\dvol_R$. The result of heat flow is a sequence of
$\dvol_R$-vortices. Now, when the norm of the curvature is taken with
respect to the lift of the Euclidean metric, it is no longer
bounded. Therefore, the $\dvol_R$-vortices have to be altered by a
sequence of complex gauge transformations supported near the
origin. This alteration, carried out in the Proposition below, makes
the $L^2$ norm of the curvature bounded with respect to the lift of
the Euclidean metric. This is useful in removing the singularity of
the limit vortex at the origin.

\begin{proposition} \label{prop:almostvortexnear0} Let $R_i \to
  \infty$ be an increasing sequence and $\tilde R_i:=R_i^{1/n}$. Let
  $(\tilde A_i, \tilde u_i)$ be a sequence of $\dvol_{\tilde
    R_i}$-vortices on the $K$-bundle $\pi^*P \to \tilde \P$ that are
  symmetric under the $\Z_n$-action, whose energies are bounded and
  there is a compact set that contains the images of the maps $\tu_i$.
  There exist complex gauge transformations $e^{i\tilde s_i} \in
  W^{2,p}_{\loc}(\G(\pi^*P))$ that are symmetric under the
  $\Z_n$-action, are equal to identity on $\tilde \P \bs B_{1/\tilde
    R_i}$ and for which
  $$\sup_i \Mod{F_{\pi_*(e^{i\tilde s_i}\tilde A_i)}}_{L^2(B_{R_i})}<\infty.$$
The complex gauge transformations satisfy $\Mod{\tilde s_i}_{C^0(B_{1/R_i},\k)}<c$.
\end{proposition}
\begin{proof} We explain the strategy of the proof. The Euclidean
  metric on the affine line in $\P(1,n)$ corresponds to the orbifold
  metric $\dvol_{\on{orb}}$ on the cover $\tilde \P$. For the given
  sequence of vortices, the curvature norm is bounded with respect
to the  $\dvol_{\tilde R_i}$ metric, which near the origin is bigger than the
  orbifold norm. Rescaling the metric by a factor smaller than one has
  the effect of increasing the $L^2$ norm of the curvature (a
  two-form). The Proposition is proved by a sequence of complex gauge
  transformations that make the connection flat in a small
  neighborhood (with radius of $1/2\tilde R_i$) of zero.

  We first produce the sequence of complex gauge transformations. We
  are working on balls in $\tilde \P$ about the origin with radius
  $1/\tilde R_i$. But, since we need uniform bounds on the sequence of
  complex gauge transformations, it is necessary to use the same
  domain to construct each element of the sequence. This is done by
  dilating the domains to a ball of constant radius.  For any $R>0$,
  let $\tilde B_R \subset \tilde U_2$ be the ball of radius $R$
  centered at $0$ in $\tilde \P$.  Let
  $$\sig_R: \tilde B_1 \to \tilde B_{1/R} \quad x \mapsto x/R$$
  be the dilation map. In the proof of this Proposition, $c$ will
  denote a constant that is independent of $i$, whose value may vary
  across appearances. In this proof, we use the symbol ``$\approx$''
  in the following sense: for two real non-negative $i$-dependent
  quantities $A$, $B$, we say $A \approx B$ if there exist constants
  $c_1$ and $c_2$ independent of $i$ such that $c_1A \leq B \leq
  c_2A$. We have
  \begin{equation}\label{eq:curvbd}
    \begin{split}
      \Mod{\sig_{\tilde R_i}^*F(\tilde A_i)}_{L^2(\tilde B_1)}&=\frac 1 {\tilde R_i} \Mod{F(\tilde A_i)}^{Euc}_{L^2(\tilde B_{1/\tilde R_i})} \approx \tilde R_i^{-2n+1} \Mod{F(\tilde A_i)}^{\dvol_{\tilde R_i}}_{L^2(\tilde B_{1/\tilde R_i})} \\
      & = \tilde R_i^{-2n+1} \Mod{\Phi(u_i)}^{\dvol_{\tilde
          R_i}}_{L^2(\tilde B_{1/\tilde R_i})} \leq c\tilde
      R_i^{-3n+1}.
    \end{split}
  \end{equation}
  For the second equality, we use the fact that in this region,
  $\dvol_{R_i}$ is the Euclidean metric scaled by a factor of
  $R_i^{-2n+2}$ times a positive function with values between $1$ and
  $2^{2n-2}$. The third equality follows from the fact that $(\tilde
  A_i,\tu_i)$ is a $\dvol_{\tilde R_i}$-vortex. For the last
  inequality, we observe that the images of $u_i$ are contained in a
  compact subset of $X$ and hence there is a $C^0$ bound on
  $\Phi(u_i)$, so the bound on the $L^2$ norm has a factor of the
  square-root of the volume of the domain. By Uhlenbeck's local
  theorem (\cite[Theorem 2.1]{Uh:compactness}), for large $i$, we can
  now choose a trivialization corresponding to each connection
  $\sig_{\tilde R_i}^*\tilde A_i$, under which the connection matrices
  are uniformly bounded and are invariant under the $\Z_n$-action,
  i.e.
  \begin{equation}\label{eq:connbd}
    \Mod{\sig_{\tilde R_i}^*\tilde A_i}_{H^1(\tilde B_1)} \leq c\tilde R_i^{-3n+1}.
  \end{equation}
  We remark that this requires a symmetric version of the local
  theorem that given a symmetric connection, there exists a symmetric
  gauge transformation that puts the connection in Coulomb gauge.  The
  symmetric version can be proved in a straightforward way by going
  through the steps of the proof of the local theorem.  By Lemma
  \ref{lem:toflat} and Remark \ref{rem:toflat_higher}, there is a
  constant $c$ such that for large enough $i$, there exists $\tilde
  \xi_i\in H^2(\tilde B_{1/\tilde R_i},\k)$ such that $\tilde
  \xi_i|_{\partial B_{1/\tilde R_i}}=0$, the connection $e^{i\tilde
    \xi_i}(\tilde A_i)$ is flat and
  \begin{equation}\label{eq:sw2pbd}
    \Mod{\sig_{\tilde R_i}^* \tilde \xi_i}_{H^2(\tilde B_1)} \leq c\Mod{\sig_{\tilde R_i}^* F(\tilde A_i)}_{L^2(\tilde B_1)} \leq c\tilde R_i^{-3n+1}.
  \end{equation}
  By the uniqueness of $\tilde \xi_i$, it is symmetric under the
  $\Z_n$-action.  The bound \eqref{eq:sw2pbd} implies a $C^0$ bound
  \begin{equation}\label{eq:sicobd}
    \Mod{\tilde
      \xi_i}_{C^0} \leq c\tilde R_i^{-3n+1},
  \end{equation} 
  and this bound is independent of metric. We cut-off $e^{i\tilde
    \xi_i}$ outside a ball of radius $1/\tilde R_i$ and define $\tilde
  s_i:=\eta_{1/2\tilde R_i}\tilde \xi_i$, where the cut-off function
  $\eta_{1/2\tilde R_i}$ is as defined in \eqref{eq:cutoff}. 
The required sequence of complex gauge transformations is
$e^{i\tilde s_i}$. The elements of the sequence inherit the $C^0$ bound on $\tilde \xi_i$ from
\eqref{eq:sicobd}.

We show that the curvature bound in the Proposition is satisfied with
the complex gauge transformations $e^{i\tilde s_i}$ chosen above.  We
continue to work with the dilated quantities $\sig_R^*\tilde A_i$,
$\sig_R^*\tilde s_i$ defined on the unit ball $\tilde B_1 \subset \tilde
\P$. The $H^2$ bound on $\sig_R^*\tilde \xi_i$ implies
\begin{equation}
  \label{eq:sh2bd}
  \Mod{\sig_{\tilde R_i}^* \tilde s_i}_{H^2(\tilde B_1)} \leq c\tilde R_i^{-3n+1}.
\end{equation}
For large $i$, the estimate in Lemma \ref{lem:gcactona} is applicable
for the action of the complex gauge transformations $\sig_{\tilde
  R_i}^* \tilde s_i$ on the connections $\sig_{\tilde R_i}^* \tilde
A_i$. Using \eqref{eq:sh2bd} and \eqref{eq:connbd} alongwith Lemma
\ref{lem:gcactona}, we can conclude
\begin{equation*}
  \Mod{\sig_{\tilde R_i}^* (e^{i\tilde s_i}\tilde A_i-\tilde A_i)}_{H^1(\tilde B_1)}<c\tilde R_i^{-3n+1}.
\end{equation*}
The curvature map $H^1 \ni A \mapsto F_A \in L^2$ is continuous. So,
\begin{equation*}
  \Mod{\sig_R^*(F_{e^{i\tilde s_i}\tilde A_i}-F_{\tilde A_i})}_{L^2(\tilde B_1)} \leq c\Mod{\sig_{\tilde R_i}^* (e^{\tilde s_i}\tilde A_i-\tilde A_i)}_{H^1(\tilde B_1)}<c\tilde R_i^{-3n+1}.
\end{equation*}
In the above equation, the contribution of quadratic terms involving
the connection can be ignored by adjusting the constant $c$, because
the term $e^{\tilde s_i}\tilde A_i-\tilde A_i$ has small norms for
large $i$ and so, the quadratic term is domainated by the linear term. Combining with \eqref{eq:curvbd}, we get
\begin{equation} \label{eq:transfcurvbd}
  \Mod{\sig_R^*F_{e^{i\tilde s_i}\tilde A_i}}_{L^2(\tilde B_1)}<c\tilde R_i^{-3n+1}.
\end{equation}

Finally we transport the bound on the curvature on $\tilde B_1$ to the
ball $B_{R_i} \subset \P(1,n)$. The quantities $\tilde s_i$ and
$\tilde A_i$ are symmetic under the $\Z_n$-action, so they descend via
$\pi_*$ to $\P(1,n)$.  Define
$$s_i :=\pi_*\tilde s_i, \ A_i:= \pi_* \tilde A_i.$$
We recall that $\pi$ maps the ball $\tilde B_{\tilde R_i}$ to $B_{R_i}$ in $\P(1,n)$. By construction of $s_i$, the connection $e^{is_i}A_i$ is flat in $B_{2^{-n}/R_i}$. So,
\begin{align*}
  \Mod{F_{e^{is_i}A_i}}_{L^2(B_{1/R_i})}^{\Euc}&=\Mod{F_{e^{is_i}A_i}}_{L^2(A(2^{-n}/R_i,1/R_i))}^{\Euc} = \Mod{F_{e^{i\tilde s_i}\tilde A_i}}_{L^2(A(1/2\tilde R_i,1/\tilde R_i))}^{\dvol_\orb}\\
  &\approx \tilde R_i^{2n-2}\Mod{F_{e^{i\tilde s_i}\tilde A_i}}_{L^2(A(1/2\tilde R_i,1/\tilde R_i))}^{\Euc}\\
 &=\tilde R_i^{2n-1}\Mod{\sig_R^*F_{e^{i\tilde s_i}\tilde
      A_i}}_{L^2(A(1/2,1))}^{\Euc}\leq c\tilde R_i^{-n}=cR_i^{-1}.
\end{align*}
 The second equality in the above estimate is based on the
fact that the Euclidean metric on $\P(1,n)$ lifts to the metric
$\dvol_\orb$ on $\tilde \P$. For the third equality, we observe that
the metric $\dvol_\orb$ is equal to the metric $\tilde
R_i^{-2n+2}\dvol_{\Euc}$ multiplied by a scalar function that takes
values between $1$ and $2^{2n-2}$. We remark that if the annulus were
replaced by a ball we would not have an upper bound for that
multiplicative factor. Finally, the inequality is the result of
applying \eqref{eq:transfcurvbd}, concluding the proof of Proposition
\ref{prop:almostvortexnear0}.
\end{proof}

\section{From vortices to holomorphic maps}
\label{sec:last}

In this section, we prove Theorem \ref{thm:main}
\eqref{part:mainthm2}. The main tool used is the following result on
the asymptotic behavior of a finite energy vortex $(A,u)$ on $\C$,
which is a slight generalization of a result of Ziltener
\cite{Zilt:invaction}.

\begin{proposition}{\rm (Exponential Decay for Vortices)}\label{prop:asym}
  Suppose the $G$-action on $X^{\ss}$ has finite stabilizers and the
  quotient $X \qu G$ is compact. Let $\fix$ be a positive integer such
  that for any $x \in X^{\ss}$, the order of the stabilizer group
  $|G_x|$ divides $\fix$. Let $(A,u)$ be a finite energy vortex on
  $\C\bs B_1$ with target $X$ whose image is contained in a compact subset of
  $X$. Then, for every $\eps>0$, there is a constant $C$ such that
  \begin{align}\label{eq:asymdecay}
    |F_A(z)|^2+|\d_Au(z)|^2+|\Phi(u(z))|^2 \leq C|z|^{-2-\frac 2 n
      +\eps}, \quad \forall z \in \C \bs B_1.
  \end{align}
  The norms are taken with respect to the standard Euclidean metric on
  $\C$.
\end{proposition}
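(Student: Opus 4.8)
The plan is to deduce the pointwise bound from a polynomial decay estimate for the energy on annuli, upgraded to a pointwise statement by the standard mean-value inequality for vortices. Write $e(z):=|F_A(z)|^2+|\d_Au(z)|^2+|\Phi(u(z))|^2$ and $\mathcal E(r):=\int_{\C\bs B_r}e\,\om_\Sig$, which is finite by hypothesis. First I would record what is already available for a finite energy vortex on $\C$: by equivariant convexity at infinity together with the finite energy bound (cf. Lemma \ref{lem:ubd} and the asymptotic analysis in \cite{GS} Section 11 and \cite{Zilt:QK}) the image $u(\C)$ is precompact, the vortex converges, after a gauge transformation, to a point $x_\infty\in\Phi^{-1}(0)$ (with the holonomy $e^{2\pi\lambda}$, $e^{2\pi n\lambda}=\Id$, acting along the $K$-orbit of $x_\infty$), and $e(z)\to 0$ as $|z|\to\infty$. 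In particular $\mathcal E(r)\to 0$.

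The exponent $2/n$ is forced by the orbifold-and-holonomy structure at infinity, and the cleanest way to expose it is to pass to cylindrical coordinates $s=\log r$, in which the Euclidean metric is $e^{2s}(ds^2+d\theta^2)$ and the vortex equation reads $*_{\mathrm{cyl}}F_A+e^{2s}\Phi(u)=0$, and then to the $n$-fold cover $\zeta\mapsto\zeta^n=z$ which uniformises the point at infinity and trivialises the holonomy. On the cover $\delbar_Au$ becomes a genuine Cauchy--Riemann equation, so near infinity the field splits into a \emph{massive} part — the components along $\g\cdot x_\infty$ and the moment-map direction, where $\L_{x_\infty}$ is a positive operator by local freeness — and a \emph{massless} part tangent to the quotient $X\qu G$. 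The growing factor $e^{2s}$ makes the massive part decay faster than any power of $r$, whereas the massless part is governed by a holomorphic map into $X\qu G$ converging to a point; its slowest Fourier mode on the covering cylinder decays like $e^{-\tilde s}=r^{-1/n}$ (here $\tilde s=s/n$), so its $z$-derivative decays like $r^{-1-1/n}$ and contributes $e\sim r^{-2-2/n}$.

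To make this rigorous I would establish a differential inequality for the annular energy. With $\epsilon(s):=\int_{\{s\}\times S^1}e\,d\theta$ on the covering cylinder, a Poincar\'e-type inequality on the circle — whose constant is the square of the lowest admissible frequency, equal to $1/n$ once the holonomy shift and the $\Z_n$ isotropy ($|G_{x_\infty}|$ divides $n$) are accounted for — yields $\mathcal E'(s)\le -\tfrac{2}{n}\,\mathcal E(s)$ up to lower order terms, hence $\mathcal E(r)\le C\,r^{-2/n+\eps}$ after absorbing the errors into the arbitrarily small loss $\eps$. Finally, once $|z|$ is large enough that the local energy on $B_{|z|/2}(z)$ is below the quantization threshold, the mean-value inequality gives $e(z)\le C|z|^{-2}\int_{B_{|z|/2}(z)}e\le C'|z|^{-2}\mathcal E(|z|/2)$, and combining with the energy decay produces $e(z)\le C|z|^{-2-2/n+\eps}$. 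The main obstacle is the sharp exponent: one must carry out the spectral analysis of the angular operator with the fractional frequencies produced by the holonomy and the orbifold point, and control the coupling between the massive and massless components so that the fast-decaying massive directions do not corrupt the slow massless rate; the conformal rescaling and the mean-value machinery are otherwise routine.
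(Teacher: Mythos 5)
Your overall scheme (cylindrical coordinates, $n$-fold cover, energy differential inequality, mean-value upgrade) has the right shape, and your heuristic for the exponent $2/n$ is correct; but as written the proposal has two genuine gaps. First, the asymptotics you take as ``already available'' --- convergence of $u$, after a gauge transformation, to a point $x_\infty\in\Phi^{-1}(0)$ together with an asymptotic holonomy $e^{2\pi\lambda}$ satisfying $e^{2\pi n\lambda}=\Id$ --- are cited from \cite{GS} and \cite{Zilt:QK}, which assume the action on the relevant locus is \emph{free}. In the locally free case these statements are part of what must be proved: the paper establishes convergence of $\pi_G\circ u$ separately (Proposition \ref{prop:infusing}, which requires lifting into orbifold uniformizing charts of $X\qu G$ to run the isoperimetric inequality), while the holonomy/gauge statement is Proposition \ref{prop:infvortsing}, which the paper \emph{deduces from} the decay estimate \eqref{eq:asymdecay} you are trying to prove --- so taking it as input is circular. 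Second, the analytic core of your plan --- the superpolynomial decay of the ``massive'' modes, the fractional-frequency Poincar\'e inequality giving $\mathcal{E}'(s)\le -\tfrac{2}{n}\mathcal{E}(s)$, and the control of the massive/massless coupling --- is precisely the content of Ziltener's cylinder theorem transplanted to the orbifold setting, and you explicitly defer it as ``the main obstacle.'' In effect you propose to reprove \cite{Zilt:invaction} from scratch with fractional frequencies; that may be feasible, but none of its hard steps are carried out here.

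The paper's proof avoids all of this with one trick your proposal is missing: a covering of the \emph{target}, not merely of the base. Once $\lim_{s\to\infty}\pi_G\circ u$ exists, pick $x$ in the fiber over the limit point and a slice $S$ at $x$, so that $\pi:G\times S\to GS\subseteq X^{\ss}$ is a $|G_x|$-fold covering on which the left $K$-action is \emph{free} with moment map $\pi^*\Phi$. Since $|G_x|$ divides $n$, the monodromy obstruction vanishes after pulling back to the isometric $n$-fold cover $\ol\Sig_{s>s_0}$ of the cylindrical end, so $(A,u)$ lifts there to a finite-energy vortex into $G\times S$; Ziltener's free-case theorem (Proposition \ref{prop:decaycyl}) then applies verbatim on the cylinder of circumference $a=2\pi n$, giving the rate $e^{(-\frac{4\pi}{a}+\eps)s}=e^{(-\frac{2}{n}+\eps)s}$, i.e.\ exactly \eqref{eq:asymdecay} after converting back to Euclidean norms. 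By contrast, your base-only cover $\zeta\mapsto\zeta^n$ changes nothing about the target: the action remains only locally free upstairs, the connection cannot simply be gauged away, and the difficulty you then face head-on is the one the slice covering was designed to eliminate.
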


Ziltener \cite{Zilt:invaction} proves this result for $n=1$, in
Section \ref{sec:orbdecay}, we explain how it generalizes to the case
when $X \qu G$ is an orbifold. The following is a conclusion of
Proposition \ref{prop:asym}.  The proofs appear in Section
\ref{sec:orbdecay}.

\begin{proposition}{\rm (Removal of singularity for vortices at infinity)}\label{prop:infvortsing} 
  Assume the setting of Proposition \ref{prop:asym}. Further, assume that the restriction of $A$ in radial gauge to the circle
  $\{|z|=r\}\simeq S^1$ is $\onD+a\d \theta$, for $r \geq 1$.
For any $p>2$ that satisfies $p< \frac 2 {1-\frac 1 n}$ in case $n>1$ and $0 < \gamma < \frac 1 n -1 + \frac 2 p$,
  there exist a constant $c$, a point $x_0 \in \Phi^{-1}(0)$ and a gauge transformation $k_0 \in
  W^{1,p}([0,2\pi],K)$ such that
  \begin{equation}\label{eq:phiconndecay}
    \lim_{r \to \infty}\max_{\theta \in [0,2\pi]}d(x_0,k_0(\theta)u(re^{i\theta}))=0,\quad
    \Mod{k_0^{-1}\partial_\theta k_0+a(r,\cdot)}_{L^p([0,2\pi],K)}<cr^{-\gamma}.
  \end{equation}
  The gauge transformation $k_0$ can be chosen so that $k_0(0)=\Id$. The point $x_0$ is fixed by
  $k_0(2\pi)$, so $k_0(2\pi)^n=\Id$.
\end{proposition}

\subsection{Proof of Theorem \ref{thm:main} \eqref{part:mainthm2}}

\begin{proof}{\rm (Proof of Theorem \ref{thm:main} \eqref{part:mainthm2})}
Let $(A,u)$ be a finite energy affine
vortex with bounded image. The singularity of the vortex $(A,u)$ at infinity can be removed in a weak sense via a unitary gauge transformation of class
  $W^{1,p}$. We work on the chart of the weighted projective line that contains
  the point at infinity, which is $\tU_1 \simeq \C$ in the terminology
  of \eqref{eq:charts}. 
The
  vortex $(A,u)$ lifts to $(\tA,\tu)(z) := (A,u)(z^{-\fix})$,
  defined over $\tU_1 \bs \{0\}$. We assume that $A$, and hence $\tilde A$ is in radial gauge in a punctured neighborhood of $z=0$ in $\tU_1$. We define a gauge
  transformation $\tk_1$ on $\tU_1 \bs \{0\}$ so that the pair $(\tk_1\tA,\tk_1 \tu)$ extends to a gauged holomorphic map in $(L^p_{\loc} \times W^{1,p}_\loc)(\tU_1)$. Define $\tk_1(r,\theta):=k_0(n\theta)$ where $k_0 \in W^{1,p}([0,2\pi],K)$ is given by
  Proposition \ref{prop:infvortsing}. Recall that $k_0$ was only
  defined on $[0,2\pi]$, which can be extended as
  $k_0(\theta+2\pi)=k_0(2\pi)k_0(\theta)$. This way, $\tk_1$ is a
  well-defined gauge transformation on $\C \bs \{0\}$ because
  $k_0(2\pi)^n=\Id$. By Proposition \ref{prop:asym}, $\tk_1\tu$ extends
  continuously over $\infty$, with $\tk_1\tu(\infty)=x_0$. The connection
  $\tk_1\tA$ is in radial gauge, so $\tA=\onD+a\d \theta$. Then, using \eqref{eq:phiconndecay},
  $$\Mod{\tk_1\tA}^p_{L^p(B_1)}= \int_0^1\int_0^{2\pi}|\frac 1 r
  (k_0^{-1}\partial_\theta k_0+a(r^{-n},\theta))|^p r\d \theta dr \leq
  c\int_0^1r^{n\gamma p+1-p}dr.$$
  This expression is finite because $2<p<2(1+\frac 1 n)$, so we can
  choose
  $$\frac 1 n(1-\frac 2 p)<\gamma<\frac 1 n -1 +\frac 2 p .$$
  Proposition \ref{prop:infvortsing} is applicable with these
  values.
  
  If the gauge transformation $k_0$ were smooth, we could have used it
  to construct a principal bundle over the weighted projective line,
  on which the given vortex would be $p$-bounded. But since
  $k_0$ is not smooth, we have to take an indirect approach involving
  complex gauge transformations. We transform the vortex into a smooth
  gauged holomorphic map on a $K$-bundle over the weighted projective line. Then, we
  argue that the complex gauge transformation used is $p$-bounded over that bundle.

We now show that there is a complex gauge transformation, called $\tilde h$, in a punctured neighborhood of infinity in the weighted projective line, that takes the given vortex to an untwisted holomorphic map in the neighborhood of infinity, i.e. a trivial connection and holomorphic map.
For $0<\tilde
  R<1$, let $\sig_{\tilde R}:B_1 \to B_{\tilde R}$ denote the dilation
  function $x \mapsto \tilde Rx$. Writing $\tk_1\tA=\onD+\tilde a$, we
  have $\sig_{\tilde R}^*(\tk_1\tA)=\onD+R\tilde a(R\cdot)$. Then,
  $$\Mod{\sig_R^*(\tk_1\tA)}_{L^p(B_1)}=\Mod{\tilde R \tilde a(\tilde
    R\cdot)}_{L^p(B_1)}=\tilde R^{1-\frac 2 p}\Mod{\tilde
    a}_{L^p(B_{\tilde R})}.$$
  For some $\tilde R \in (0,1]$, $\Mod{\sig_{\tilde
      R}^*\tA}_{L^p(B_1)}$ is small enough that Lemma \ref{lem:toflat}
  is applicable. By Lemma \ref{lem:toflat} there is a
  unique $\tilde \xi \in W_0^{1,p}(B_{\tilde R},\k)$ such that the
  connection $e^{i\tilde \xi}\tk_1\tA$ is flat. Further, up to left
  multiplication by a constant, there is a unique gauge transformation
  $\tk_2 \in W^{1,p}(B_{\tilde R},K)$ such that the connection $\tk_2
  e^{i\tilde \xi}\tk_1\tA$ is the trivial connection. Choose $\tk_2$ so
  that $\tk_2(0)=\Id$. Denote by $\tilde h:=\tk_2 e^{i\tilde \xi}\tk_1:B_{\tilde R}\bs \{0\} \to
  G$ the complex gauge transformation we
  have calculated so far. It transforms the map $\tu$ to a holomorphic
  map near infinity. This is because $\delbar (\tilde h \tilde
  u)=0$ on $B_{\tilde R} \bs \{0\}$ and $\tilde h \tilde u(B_{\tilde
    R})$ extends continuously over $0$. So, we can work on a chart of
  $X$ and use the removal of singularities for holomorphic functions
  (\cite[Theorem 3.1]{Stein_Shakarchi}) to show that $\tilde h \tu$
  extends holomorphically over $0$.  Finally, the complex gauge transformation $\tilde h$ satisfies the symmetry property
$\sig_n^*(\tilde h)=k_0(2\pi)\tilde h$.
This follows from the symmetry properties of $\tk_1 \tilde A$, $\tilde \xi$ and $\tilde k_2$. In particular, we have $\sig_n^*(\tk_1 A)=k_0(2\pi)A$. By the uniqueness of
  $\tilde \xi$ and $\tilde k_2$, we can say that $\tilde \xi \circ
  \sig_n=\Ad_{k_0(2\pi)}\tilde \xi$ and $\tk_2 \circ
  \sig_n=k_0(2\pi)\tk_2k_0(2\pi)^{-1}$. 

  The 
  complex gauge transformation $\tilde h$ calculated in the last paragraph will extend over the affine line after
  it is given a constant twist, and it will transform the given vortex to a gauged holomorphic map that extends smoothly over infinity.  
Suppose $k_0:[0,2\pi] \to K$ is homotopic (with fixed end
  points) to the geodesic $\theta \mapsto e^{-\lambda \theta}$. The
  complex gauge transformation $e^{n\lambda \theta}\tilde h$ is symmetric under the $\Z_n$-action, so it
  descends to a complex gauge transformation $g_1$ on $\C \bs B_R$ that
  satisfies 
$e^{n\lambda \theta}\tilde h(z)=g_1(\frac 1 {z^n})$. 
On $\C \bs B_R$,  $g_1 A$ is equal to $\onD+\lambda \d
  \theta$, which is a smooth connection. The map $g_1u$ is $\delbar_{g_1A}$-holomorphic, so it is smooth on $\C \bs B_R$ by elliptic regularity. 
The complex gauge transformation $g_1:\C \bs B_R \to
  G$ is smooth because both $u$ and $g_1u$ are smooth and $u(\C \bs B_R)
  \subset X^{\ss}$, where $G$ acts with finite stabilizers.
The map $g_1:\C \bs B_R\to G$ is smoothly homotopic to
  identity.  This is because on $B_{\tilde R}\bs \{0\} \subset \tU_1$,
  both $\tk_1^{-1}\tk_2e^{i\tilde \xi} \tilde k_1$ and $e^{n\lambda
    \theta}\tk_1$ are continuously homotopic to identity. Therefore, 
there is a smooth complex gauge transformation $g$ on $\C$ that agrees with $g_1$ on $\C \bs B_{2R}$.
  The gauged
  holomorphic map $g(A,u)$ satisfies the conditions in Proposition
  \ref{prop:ghext} - i.e. on $\C \bs B_{2R}$, $g A =\onD+\lambda \d
  \theta$ and $\lim_{r \to \infty}e^{-\lambda \theta}g
  u(r,\theta)=x_0$. So, $g(A,u)$ extends to a smooth gauged holomorphic map
  over $P \to \P(1,n)$ where the principal $K$-bundle $P$ is described by
  transition functions $\mu=e^{-2\pi\lambda}$ and $\tau=e^{n\lambda
    \theta}$.
  This indeed proves the existence statement of the theorem: the
  complex gauge transformation $g$ can be written as $g=e^{i\zeta}k$,
  where $k \in \K(\C)$ and $\zeta: \C \to \k$ are smooth. We claim
  that $e^{-i\zeta} \in \G(P)_{\on{bd}}$, so that $k(A,u)$ extends to
  a $p$-bounded gauged holomorphic map over $P \to \P(1,n)$. Let
  $\tilde \zeta:\tU_1\bs \{0\} \to \k$ be defined as $\tilde
  \zeta(z)=\zeta(\frac 1 {z^n})$ and let $\tk:\tU_1\bs \{0\} \to K$ be
  equal to $\tk(z)=k(\frac 1 {z^n})$. By straightforward calculations,
  on $B_{\tilde R/2^n}\bs \{0\} \subset \tU_1\bs \{0\}$, we have
$$e^{i\tilde \zeta}=e^{n\lambda \theta}\tk_2 e^{i\tilde \xi}\tk_2^{-1}e^{-n\lambda \theta}, \quad \tk=e^{n\lambda \theta}\tk_2\tk_1.$$
In the trivialization of $P$ on $\tU_1$, the complex gauge
transformation $e^{i\zeta}$ is given by $e^{-n\lambda
  \theta}e^{i\tilde \zeta}e^{n\lambda \theta}:\tU_1 \bs \{0\}\to K$,
which is equal to $\tk_2 e^{-i\tilde \xi} \tk_2^{-1}$ on $B_{\tilde R/2^n}\bs
\{0\} \subset \tU_1\bs \{0\}$. By recalling details from the previous
paragraph, $\tk_2 e^{-i\tilde \xi} \tk_2^{-1}$ extends to a $W^{1,p}$ map on
$B_{\tilde R/2^n}$. Therefore $g$ is in $\G(P)_{\on{bd}}$ and the pair
$k(A,u)$ extends to a $p$-bounded gauged holomorphic map over
$P$. Hence, the pair $(A,u)$ extends to a $p$-bounded gauged
holomorphic map over a bundle that is isomorphic to $P$ (see
\eqref{eq:orbibundleiso}).

  Finally we show uniqueness.  Given a finite energy vortex $(A,u)$,
  the bundle $P \to \P(1,n)$ is determined uniquely by the path
  $\{ [0,\frac {2\pi} n] \ni \theta \mapsto k_0(\theta)\}$ up to
  homotopy and the action of $\Ad_k$ for $k \in K$ (see Remark
  \ref{rem:orbibundleinv}). The path $k_0$ is determined using
  Proposition \ref{prop:infvortsing}, and the choice of this equivalence
  class is unique because it has to satisfy the condition
$$\lim_{r \to \infty}\max_{\theta \in
  [0,2\pi]}d(x_0,k_0(\theta)u(re^{i\theta}))=0,\quad \text{where }
x_0=\lim_{r \to \infty} u(r,0) \in \Phi^{-1}(0).$$
Now suppose $g_1, g_2 \in \G(P)_{\on{bd}}$ so that $g_i(A,u)$ is a
smooth gauged holomorphic map on $P$. On $B_{\tilde R} \subset \tU_1$,
$g_1u$, $g_2u$ are smooth maps to $X^{\ss}$, where $G$ acts with finite stabilizers. So, $g_1^{-1}g_2$ is smooth in this region and hence
$g_1^{-1}g_2 \in \G(P)$. This finishes the proof of Theorem
\ref{thm:main} \eqref{part:mainthm2}.
\end{proof}
The following Lemma is used in the proof of Theorem \ref{thm:main}
\eqref{part:mainthm2}. It says that on a trivial principal bundle, an
$L^p$-small connection can be transformed to a flat connection via a
$W^{1,p}$-small complex gauge transformation.
\begin{lemma}
  \label{lem:toflat} Let  $p>2$ and $\Sig$ be a compact connected Riemann surface with metric with non-empty boundary. Let $P:=\Sig \times K$ be the trivial principal $K$-bundle
  on $\Sig$. Let $\onD$ be the trivial connection on $P$. There are
  constants $c_1$, $c_2$ and $c_2'$ so that the following holds. Let
  $A=\onD +a$ be a connection on $P$ so that $a \in
  \Om^1(\Sig,\k)_{L^p}$. If $\Mod{a}_{L^p(\Sig)}<c_1$, there is a
  unique $\xi \in W^{1,p}(\Sig,\k)$ satisfying $\xi|_{\partial \Sig}=0$,
  $F_{e^{i\xi}A}=0$ and $\Mod{\xi}_{W^{1,p}} \leq
  c_2\Mod{F_A}_{W^{-1,p}} \leq c_2'\Mod{a}_{L^p}$.
  
  Further, on any contractible closed set $\Sig' \subset \on{int}
  \Sig$, there is a gauge transformation $k \in W^{1,p}(\Sig',K)$ so
  that $ke^{i\xi}A=\onD$ on $\Sig'$.  The gauge transformation $k$ is
  unique up to left multiplication by a constant element in $K$.
\end{lemma}

On a compact connected Riemann surface $\Sig$ with non-empty boundary, if the Lie group
$K$ is connected, then any principal $K$-bundle $P \to \Sig$ is
trivializable.  Hence it suffices to consider only trivial bundles in
Lemma \ref{lem:toflat}.

\begin{proof}{\rm (Proof of Lemma \ref{lem:toflat})} The proof is by an
  application of the implicit function theorem on the map
  $$ \F^A:W_0^{1,p}(\Sig,\k) \to W^{-1,p}(\Sig,\k), \quad \xi \mapsto
  *F_{e^{i\xi}A} .$$
  By \eqref{eq:fdepa} and \eqref{eq:infgcaction}, the linearization at
  $\xi$ is
  \begin{align}\label{eq:weaklap}
    D\F^A_\xi=\d_{e^{i\xi}A}^*\d_{e^{i\xi}A}:W_0^{1,p}(\Sig,\k) &\to
    W^{-1,p}(\Sig,\k).
  \end{align}
  In this proof, the constant $c$ indicates a constant that is independent of the
  connection $A$, whose value varies across appearances.

 \vskip.1in \noindent {\sc Step 1:} {\em There is a constant $c_1$ such
    that if a connection $A=\onD+a$ satisfies $\Mod{a}_{L^p}<c_1$,
    then $D\F^A(0)$ is invertible and the inverse
    has norm independent of $a$.}  \\
  The operator $\d^* \d:W_0^{1,p}(\Sig,\k)\to W^{-1,p}(\Sig,\k)$ is an
  isomorphism (see appendix D in \cite{Weh:Uh}). Further,
  \begin{align}\label{eq:lapdiff}
    (\d_A^*\d_A-\d^* \d)\eta=*[a \wedge *\d\eta]+\d^*[a, \eta]+*[a
    \wedge *[a\wedge\eta]].
  \end{align}
  Using Sobolev multiplication (Proposition \ref{prop:sobmult}) and by assuming $c_1<1$, we have 
  \begin{align*}
    \Mod{(\d_A^*\d_A-\d^* \d)\eta}_{W^{-1,p}} \leq
    c\Mod{\eta}_{W^{1,p}}(\Mod{a}_{L^p} + \Mod{a}_{L^p}^2) \leq  c\Mod{\eta}_{W^{1,p}}\Mod{a}_{L^p}
  \end{align*}
  so the operator norm satisfies $\Mod{(\d_A^*\d_A-\d^* \d)} \leq
  c\Mod{a}_{L^p}$. If this quantity is small enough, by a Neumann
  series argument, $\d_A^*\d_A$ is invertible with bounded norm. Let
  $\Theta:=(\d_A^*\d_A-\d^* \d)(\d^*\d)^{-1}$.  If $\Mod{\Theta } < 1$,
  then the operator $\Id + \Theta$ is invertible and the inverse has a
  bound
  $$\Mod{(\Id + \Theta)^{-1}} \leq \sum_{i=0}^\infty \Mod{\Theta}^i.$$
  Further,
  $$(\d_A^*\d_A)^{-1}=(\d^*\d)^{-1}(\Id + \Theta)^{-1}.$$
  We can choose $c_1$ such that if $\Mod{a}_{L^p}<c_1$, then,
  $$\Mod{(\d_A^*\d_A)^{-1}} \leq 2\Mod{(\d^* \d)^{-1}}.$$
  In the statement of the implicit function theorem (Proposition
  \ref{prop:impfnMS}), we take $C:=2\Mod{(\d^* \d)^{-1}}$.

  \vskip .1in \noindent {\sc Step 2:} {\em Given a constant $c_1$, there is a constant $c$
    such that for any connection $A=\onD+a$ satisfying
    $\Mod{a}_{L^p}<c_1$ and $\Mod{\xi}_{W^{1,p}}<1$,
  $$\Mod{D\F^A_\xi-D\F^A_0}<c\Mod{\xi}_{W^{1,p}}.$$}

Write
  $$D\F^A_\xi-D\F^A_0=(\d_{(\exp i\xi)A}^*\d_{(\exp i\xi)A}-\d_A^*\d_A)$$
  Let $(\exp i\xi)A=A+\alpha$. Then,
  $$(\d_{(\exp i\xi)A}^*\d_{(\exp i\xi)A}-\d_A^*\d_A)\xi_1=*[\alpha \wedge
  *\d_A\xi_1]+\d_A^*[\alpha \wedge \xi_1]+*[\alpha \wedge *[\alpha
  \wedge\xi_1]].$$ 
As in Step 1, the operator norm of $D\F_\xi-D\F_0$
  is bounded as
$$\Mod{\d_{(\exp i\xi)A}^*\d_{(\exp i\xi)A}-\d_A^*\d_A} \leq
c(\Mod{\alpha}_{L^p}+\Mod{\alpha}_{L^p}^2) \leq c\Mod{\xi}_{W^{1,p}},$$
where the last inequality is by Lemma \ref{lem:gcactona} and the fact that $\Mod{\xi}_{W^{1,p}}<1$. If
$\Mod{a}_{L^p}<c_1$, the constant $c$ is independent of $a$ and $\xi$.

\vskip .1in \noindent {\sc Step 3:} {\em Finishing the proof.}\\
By the result in Step 2, there is a constant $\delta_0>0$ such that
  $$\Mod \xi_{W^{1,p}} < \delta_0 \implies \Mod{D\F^A_\xi-D\F^A_0} \leq
  \frac 1 {2C} .$$
  The function
  $$(\xi,\Delta \xi) \mapsto D\F^A(\xi)\Delta \xi:W^{1,p}_0(\Sig,\k) \times W^{1,p}_0(\Sig,\k) \to W^{-1,p}(\Sig,\k)$$
  is continuous using the identity
  \eqref{eq:lapdiff}, Lemma \ref{lem:gcactona} and Sobolev
  multiplication (Proposition \ref{prop:sobmult}). So $\F^A$ is
  differentiable and the implicit function theorem is applicable.
  The constant $c_1$ can be chosen such that $\Mod{F_A}_{-1,p}<\frac {\delta_0}
  {8C}$. This is because by the multiplication theorem,
  $$\Mod{F_A}_{-1,p} \leq c(\Mod{a}_{L^p}+\Mod{a}_{L^p}^2)<cc_1.$$
  Finally, apply Proposition \ref{prop:impfnMS} with
  $\delta:=8C\Mod{F_A}_{-1,p}$. This ensures $\Mod{\F^A(0)}=\Mod{F_A}_{-1,p}<\frac
  \delta {4C}$ and we get a solution $\xi \in B_\delta(W^{1,p})$ for
  $\F^A(\xi)=0$. The solution $\xi$ also satisfies
  $\Mod{\xi}_{1,p}<8C\Mod{F(A)}_{-1,p}<c\Mod{a}_{L^p}$. The constants in this inequality are suitable values for $c_2$ and $c_2'$ in the statement of the Lemma.

  We next prove the second statement of the lemma.  We use the fact that a
  flat $L^p$ connection is gauge-equivalent to a connection that is smooth
  away from the boundary $\partial \Sig$.  Indeed, this is proved in
  \cite[Theorem 3.1]{Weh:weakflat} for a base manifold without
  boundary, but since we only want interior regularity, the proof is
  identical. We briefly outline this proof: For an $L^p$ connection
  $A$, if we choose a smooth connection $\tilde A_0$ close enough to $A$
  (in the $L^p$ norm), there is a gauge transformation $k \in W^{1,p}$
  that puts $A$ in coulomb gauge with respect to $\tilde A_0$, that
  is, if $a:=kA-\tilde A_0$,
  $$\d_{\tilde A_0}^*a=0 .$$
  This is the content of the local slice theorem (Theorem 8.3) in
  \cite{Weh:Uh}. Now, one has control over $\d a$ and $\d^*a$, using
  elliptic bootstrapping as in \cite{Weh:weakflat}, it can be shown
  that $kA$ is smooth away from $\partial \Sig$. Similar to the first
  statement in the lemma, here we rely on the ellipticity of
  $\Delta:W^{1,p}_0 \to W^{-1,p}$.

  Finally note that a smooth flat connection on a contractible set is
  gauge equivalent to the trivial connection. The uniqueness statement
  in the lemma is obvious because the trivial connection is preserved
  only by constant gauge transformations. This finishes the proof of
  Lemma \ref{lem:toflat}
\end{proof}
\begin{remark}\label{rem:toflat_higher} Lemma \ref{lem:toflat} is true
  for higher regularity connections. Suppose $k \in \Z_{\geq 0}$,
  $p>1$ such that $(k+1)p>2$.  If $a \in W^{k,p}$, then $\xi \in
  W^{k+1,p}$ can be produced satisfying the conditions of Lemma
  \ref{lem:toflat}. The proof of
  the higher regularity statement is analogous and is more standard.
\end{remark}

\subsection{Unique affine vortex in a complex gauge orbit}

The following is the last component in the proof of the main Theorem
\ref{thm:main}. To state it, we assume the result of part
\eqref{part:mainthm2} of this theorem.
\begin{proposition} {\rm (At most one vortex in a complex gauge
    orbit)} \label{prop:uniqueaffinevort} Suppose $(A_0,u_0)$ and
  $(A_1,u_1)$ are finite energy vortices on $\C$ that extend to a
  $p$-bounded gauged holomorphic map over a bundle $P \to
  \P(1,n)$. Suppose further that they are related by a complex gauge
  transformation $g \in \G(P)_{\on{bd}}$. Then $g$ is a unitary gauge
  transformation, i.e. $g \in \K(P)_{\on{bd}}$.
\end{proposition}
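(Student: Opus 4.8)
The plan is to adapt the convexity computation of Proposition \ref{prop:atmost1vortex} to the noncompact base $\C$, the price being that the integration by parts now produces a flux term at infinity that must be shown to vanish.

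First I would reduce to the purely complex case. Writing $g=ke^{i\xi}$ with $k\in\K(P)_{\on{we}}$ and $\xi$ a weakly extendable section of $P(\k)$ (the polar decomposition \eqref{eq:complexgroupiso} being smooth, $k$ and $\xi$ inherit the regularity of $g$), and replacing $(A_1,u_1)$ by $k^{-1}(A_1,u_1)=e^{i\xi}(A_0,u_0)$, which is again a vortex since unitary gauge transformations preserve the vortex equation, it suffices to show: if $(A_0,u_0)$ and $(A_1,u_1):=e^{i\xi}(A_0,u_0)$ are both finite energy vortices on $\C$ with $\xi$ weakly extendable, then $\xi\equiv 0$. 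This gives $g=k\in\K(P)_{\on{we}}$.

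Next I would set up the convexity functional. Put $(A_t,u_t):=e^{it\xi}(A_0,u_0)$ for $t\in[0,1]$ and
$$f(t):=\int_\C \lan *F_{A_t}+\Phi(u_t),\xi\ran\,\om_\C.$$
Since $(A_0,u_0)$ and $(A_1,u_1)$ are vortices, $*F_{A_0}+\Phi(u_0)=0$ and $*F_{A_1}+\Phi(u_1)=0$ pointwise, so $f(0)=f(1)=0$. The goal is the identity
$$f'(t)=\Mod{\d_{A_t}\xi}_{L^2(\C)}^2+\int_\C |(\xi)_{u_t}|^2\,\om_\C\ \ge\ 0,$$
exactly as in Proposition \ref{prop:atmost1vortex}, the only new feature being the boundary-at-infinity contribution to the integration by parts $\int\lan\d_{A_t}^*\d_{A_t}\xi,\xi\ran=\Mod{\d_{A_t}\xi}^2$. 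Granting this identity, $f$ is nondecreasing with $f(0)=f(1)$, so $f'\equiv 0$; in particular $\d_{A_0}\xi=0$ and $(\xi)_{u_0}\equiv 0$. By Propositions \ref{prop:asym}--\ref{prop:infvortsing} we may fix $R$ with $u_0(\C\bs B_R)\subset X^\ss$, where $G$, hence $K$, acts locally freely; thus $(\xi)_{u_0}=0$ forces $\xi=0$ on $\C\bs B_R$, and since $\xi$ is $A_0$-parallel its zero set is open and closed, giving $\xi\equiv 0$.

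The analytic heart of the argument, and the step I expect to be the \emph{main obstacle}, is justifying the formula for $f'$ on the noncompact base. Carrying out the computation on $B_R$ in place of $\C$ yields the extra term $\int_{\partial B_R}\lan \d_{A_t}\xi(\partial_r),\xi\ran\,ds$, which I must show tends to $0$ along some sequence $R_k\to\infty$. The two ingredients are: $\d_{A_t}\xi\in L^2(\C)$, and a bound on $\xi$ near infinity. For the former, the Dirichlet energy $\int|\d\xi|^2$ is conformally invariant in real dimension two, so pulling back to the orbifold chart $\tU_1$ and using that $\xi$ is $W^{1,p}$ ($p>2$) near the orbifold point controls the leading term, while the zeroth-order contribution from $A_t$ being in the standard form $\d+\lam\,\d\theta$ of Proposition \ref{prop:ghext} is controlled once $\xi$ decays. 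For the latter I would first show $\xi(\infty)=0$: both $u_0$ and $u_1$ converge at infinity into $\Phi^{-1}(0)$ by Proposition \ref{prop:infvortsing}, their limits are related by $e^{i\xi(\infty)}$, and the monotonicity of $t\mapsto\lan\Phi(e^{it\xi(\infty)}x),\xi(\infty)\ran$ together with local freeness forces $\xi(\infty)=0$; the quantitative decay in Proposition \ref{prop:infvortsing} then upgrades this to a rate making $\int|\xi|^2/r^2$, and hence the connection term in $\d_{A_t}\xi$, finite. With $\xi$ bounded and $\d_{A_t}\xi\in L^2$, Cauchy--Schwarz on $\partial B_R$ gives
$$\Big|\int_{\partial B_R}\lan\d_{A_t}\xi(\partial_r),\xi\ran\,ds\Big|\le R\Big(\int_0^{2\pi}|\d_{A_t}\xi|^2 d\theta\Big)^{1/2}\Big(\int_0^{2\pi}|\xi|^2 d\theta\Big)^{1/2},$$
and since $\int_1^\infty\big(\int_0^{2\pi}|\d_{A_t}\xi|^2 d\theta\big)R\,dR<\infty$ forces $\liminf_{R}R^2\int_0^{2\pi}|\d_{A_t}\xi|^2 d\theta=0$, the right-hand side tends to $0$ along a suitable sequence $R_k$. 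Interchanging $\tfrac{d}{dt}$ with the now convergent integral to obtain $f'$ needs only the uniform-in-$t$ versions of these bounds, the family $u_t$ remaining in a fixed compact set by the asymptotic analysis.
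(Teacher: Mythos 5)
Your proposal is correct and follows the paper's overall strategy --- reduce to $g=e^{i\xi}$, run the convexity computation of Proposition \ref{prop:atmost1vortex} along $e^{it\xi}(A_0,u_0)$, establish $\d_{A_t}\xi\in L^2(\C)$ by conformal invariance and weak extendability at the orbifold point (this is the paper's Step 1 essentially verbatim), and kill the flux at infinity --- but you treat that last step, which is the analytic heart, by a genuinely different and more elementary argument. The paper proves vanishing of $\int_{\partial B_r}\lan\nabla_{A_t,\nu}\xi,\xi\ran_\k$ in the \emph{full} limit $r\to\infty$: it combines the pointwise decay $|\xi(z)|^2\leq c|z|^{-2-\delta}$ (obtained from the decay of $|\Phi(u_0)|$, $|\Phi(u_1)|$ in Proposition \ref{prop:asym} together with the slice estimate \eqref{eq:goodphi}; note that this, not Proposition \ref{prop:infvortsing}, is the actual source of the ``rate'' you invoke) with $t$-uniform $C^0$ bounds on $\nabla_{A_t,\nu}\xi$ and $a_t$, produced by putting the connections in Uhlenbeck gauge on translated unit squares $S_{xy}$ and elliptically bootstrapping $a^{0,1}g=-\delbar_{A_0}g$ to a uniform $W^{2,p}$ bound on $g$; this Uhlenbeck-gauge bootstrap is the longest part of the published proof. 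You bypass it entirely: from $\d_{A_t}\xi\in L^2(\C)$ and mere boundedness of $\xi$ (already guaranteed by weak extendability and Sobolev embedding), the co-area/liminf observation yields a sequence $R_k\to\infty$ along which the Cauchy--Schwarz bound on the flux vanishes, and a sequential limit suffices: writing $f_R(t):=\int_{B_R}\lan *F_{A_t}+\Phi(u_t),\xi\ran$, one has $f_R(0)=f_R(1)=0$ for \emph{every} $R$, so integrating the identity in $t$ and using monotone convergence of the two nonnegative bulk terms closes the argument --- and, incidentally, renders the a priori finiteness of $\int_\C|(\xi)_{u_t}|^2$ (the paper's Step 2) unnecessary. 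The price is the issue you correctly flag: $R_k$ a priori depends on $t$. This is repaired exactly as you suggest, by applying the liminf trick to $\int_0^1\int_0^{2\pi}|\d_{A_t}\xi|^2\,d\theta\,dt$, which is finite because the Step-1 bound is uniform in $t$ (the ODE estimate of Lemma \ref{lem:gcactona} is uniform on $[0,1]$); one then gets $\d_{A_{t_0}}\xi=0$ and $(\xi)_{u_{t_0}}=0$ for some $t_0$, whence $\xi\equiv 0$ by local freeness near infinity and parallelism, as in your final step. In short, your route trades the paper's hard uniform elliptic estimates for a soft averaging argument, at the cost of a weaker (but entirely sufficient) sequential vanishing statement.
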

\begin{proof} The proof proceeds in the same way as the corresponding
  result for vortices on a compact Riemann surface (Proposition
  \ref{prop:atmost1vortex}).  The complex gauge transformation $g$ can
  be written as $g=ke^{i\xi}$, where $k \in \K(P)_{\on{bd}}$ and $\xi
  \in \Gamma(\P(1,n),P(\k))_{\on{bd}}$. We can assume $k=\Id$ and
  $(A_1,u_1)=e^{i\xi}(A_0,u_0)$. Let $(A_t,u_t):=e^{it\xi}(A,u)$ for
  $t \in [0,1]$. We write
  \begin{equation}\label{eq:bypartsaffine}
    \begin{split}
      \ddt \int_\C\lan *F_{A_t,u_t},\xi \ran &= \int_\C\langle \d_{A_t}^*\d_{A_t}\xi+u_t^*d\Phi(J(\xi)_{u_t}),\xi\rangle_\k\\
      &=\Mod{\d_{A_t}\xi}_{L^2}^2 +
      \int_\C\omega_{u_t}((\xi)_{u_t},J(\xi)_{u_t}) + \lim_{r \to
        \infty}\int_{\partial B_r}\lan \nabla_{A_t,\nu}\xi,\xi\ran_\k
    \end{split}
  \end{equation}
  where $\nabla_{A_t,\nu}\xi$, is the covariant derivative of $\xi$
  along $\nu$, the outward normal unit vector field to $\partial B_r$.

  However, for the above computation to make sense, we need to show
  that the terms $\Mod{\d_{A_t}\xi}_{L^2(\C)}$ and
  $\int_\C\omega_{u_t}((\xi)_{u_t},J(\xi)_{u_t})$ are finite and the
  boundary term vanishes.
  
  \vskip .1in \noindent {\sc Step 1}: {\em $\Mod{\d_{A_t}\xi}_{L^2}<\infty$ for $t \in
    [0,1]$.}\\
  We work on the chart containing infinity - this is $\tU_1$ from
  \eqref{eq:charts}. Consider $B_{\tilde R} \subset \tU_1$, for some
  $\tilde R>0$. We have $A_0 \in L^p(B_{\tilde R})$ and $\xi \in
  W^{1,p}(B_{\tilde R})$. Using Lemma \ref{lem:gcactona} for the
  action of the complex gauge transformation $e^{it\xi}$ on the
  connection $A_0$, we can say $A_t-A_0 \in L^p(B_{\tilde R})$. Since
  $p>2$, $\Mod{\d_{A_t}\xi}_{L^2(B_{\tilde R})}$ is finite. Let
  $R=(\tilde R)^{-1/n}$. By conformal invariance of the $L^2$ norm of
  one-forms, the norms $\Mod{\d_{A_t}\xi}_{L^2(\C \bs B_R)}$ and
  $\Mod{\d_{A_t}\xi}_{L^2(B_{\tilde R})}$ are equal, so both are
  finite, and hence $\Mod{\d_{A_t}\xi}_{L^2}<\infty$ for $0\leq t \leq
  1$.

  \vskip .1in \noindent {\sc Step 2}: {\em
    $\int_\C\omega_{u_t}((\xi)_{u_t},J(\xi)_{u_t})<\infty$ for $t \in
    [0,1]$.}\\
  We use asymptotic decay of vortices (Proposition \ref{prop:asym}) to
  obtain an asymptotic bound on $|\xi|$: fix an $0<\eps<\frac 2 n$,
  let $\delta=\frac 2 n -\eps$, then for an affine vortex $(A,u)$
  there is a constant $c$ so that for $z \in \C \bs B_1$,
  $$e_{A,u}(z) \leq c|z|^{-2-\delta} .$$
  Therefore, $|\Phi(u_0(z))|$, $|\Phi(u_1(z))| \leq c|z|^{-1-\frac
    \delta 2}$.

  We prove a similar asymptotic bound on $\xi$ also. For any $c>0$,
  define $Z_c:=\{|\Phi|<c\} \subset X$. For any $x \in X$, define
  $$\Psi_x: \k \to \k, \quad s \mapsto \Phi(e^{is}x) - \Phi(x).$$
  For all $x \in \Phi^{-1}(0)$, the derivative $d\Psi_x(0)$ is
  invertible. Since $\Phi$ is proper, we can choose $\eps_1>0$ such
  that $d\Psi_x(0)$ is invertible for all $x \in Z_{\eps_1}$. Define
  \begin{equation*}
    \Psi:Z_{\eps_1} \times  \k \to Z_{\eps_1} \times \k \quad (z,s) \mapsto (z,\Psi_z(s))=(z, \Phi(e^{is}z)-\Phi(z)).
  \end{equation*}
  For all $z \in Z_{\eps_1}$, the map $d\Psi(z,0)$ is an isomorphism,
  so $\Psi$ is locally an isomorphism. That is, there is an open
  neighborhood of $U$ of $(z,0)$, such that $\Psi:U \to f(U)$ is a
  diffeomorphism. By the compactness of $\ol Z_{\eps_1}$, 
  there exists $\eps_2>0$ such that $\Psi$ has a smooth inverse on
  $Z_{\eps_1} \times B_{\eps_2}(\k)$. Set
  $\eps:=\min\{\eps_1,\eps_2\}$. We can conclude that there is a
  constant $c$ such that for $x$, $y \in Z_\eps$ satisfying
  $y=e^{is}x$ for some $s \in \k$,
  \begin{equation}\label{eq:sPhicont}
    |s| < c|\Phi(x)-\Phi(y)|.
  \end{equation}
  We can choose $0<\eps_2'<\eps_2$ such that for all $z \in
  Z_{\eps_1}$, the straight line connecting $0$ and any point in
  $\Psi_z^{-1}(B_{\eps_2'})$ is contained in
  $\Psi_z^{-1}(B_{\eps_2})$. Set $\eps':=\min\{\eps_1,\eps_2'\}$. For
  any $x$, $y \in Z_{\eps'}$ satisfying $y=e^{is}x$ for some $s \in
  \k$, the curve $\{e^{its}x:t \in [0,1]\}$ is contained in $Z_\eps$.

  Let $R>0$ be large enough that both $u_0$, $u_1$ map $\C \bs B_R$ to
  $Z_{\eps'}$. Recall $u_1=e^{i\xi}u_0$. By the preceding discussion,
  $u_t(\C \bs B_R) \subset Z_\eps$ for all $t \in [0,1]$ and by
  \eqref{eq:sPhicont},
  \begin{equation}\label{eq:xibd}
    |\xi(x)| < c|\Phi(u_1(x))-\Phi(u_0(x))| < c|z|^{-1-\frac \delta 2}
  \end{equation}
  for $x \in \C \bs B_R$. For $z \in Z_\eps$, the operator $L_z$ (see
  \eqref{eq:defL}) is uniformly bounded. So, there is a constant $c$
  so that
  $$|\omega_{u_t}((\xi)_{u_t},J(\xi)_{u_t})(z)|<c|z|^{-2-\delta}$$
  for $z \in \C \bs B_R$ and hence,
  $$\int_\C\omega_{u_t}((\xi)_{u_t},J(\xi)_{u_t})<\infty, \quad 0\leq t \leq 1 .$$

  \vskip .1in \noindent {\sc Step 3}: {\em $\lim_{r \to \infty}\int_{\partial B_r}\lan
    \nabla_{A_t,\nu}\xi,\xi\ran_\k=0$ for $t \in [0,1]$.}\\ In step 1,
  we showed that $A_t$ is in $L^2(\C \bs B_R)$, but we do not know
  anything yet about the restriction of $A_t$ to the boundary
  $\partial B_R$. We now obtain a $C^0$-bound under suitable local
  trivializations of $P$. For this we cover $\C \bs B_R$ by identical
  open sets. Let $S \subseteq \C$ be an open set with smooth boundary
  such that
  $$[-\tqq,\tqq]\times [-\tqq,\tqq] \subseteq S \subseteq [-1,1]
  \times [-1,1] .$$
  Then, $\{S+(x,y):|x|,|y| \geq R-2\}$ is a cover of $\C \bs B_R$. Let
  $S''\Subset S' \Subset S$ be such that their translates (by
  integers) also cover $\C \bs B_R$.  The quantity
  $\lan\d_{A_t}\xi,\xi\ran_\k \in \Om^1(\Sigma)$ is gauge-invariant,
  so on each $S+(x,y)$, we can choose a different trivialization to
  study it. We focus on a single set $S_{xy}:=S+(x,y)$ and let
  $r:=\sqrt{x^2+y^2}$. In the following discussion, the constant $c$
  is independent of $(x,y)$ and $r$. Fix a trivialization of
  $P|_{S_{xy}}$ so that $A_0=\onD+a_0$ is in Uhlenbeck gauge i.e.
  \begin{align}\label{eq:a0bd}
    \Mod{a_0}_{W^{1,p}(S_{xy})} < c\Mod{F(A_0)}_{L^p(S_{xy})}<c.
  \end{align}
  Under this trivialization, we write $A_t=\onD+a_t$. By applying a
  gauge transformation $k:S_{xy}\to K$, we can put $A_1$ in Uhlenbeck
  gauge - i.e. if $kA_1=\onD+\tilde a_1$ then,
  \begin{align}\label{eq:a1bd}
    \Mod{\tilde a_1}_{W^{1,p}(S_{xy})} <
    c\Mod{F(A_1)}_{L^p(S_{xy})}<c.
  \end{align}
  Denote $g=ke^{i\xi}$, so $gA_0=\onD+\tilde a_1$. As in the proof of
  Lemma \ref{lem:complexgreg}, we can write
  $a^{0,1}g=-\delbar_{A_0}g$, where $a=\tilde a_1-a_0$. By elliptic
  regularity,
  \begin{align*}
    \Mod{g}_{W^{1,p}(S'_{xy})} &\leq c(\Mod{\delbar
      g}_{L^p(S_{xy})}+\Mod g_{L^p(S_{xy})}) \\ & \leq
    c(\Mod{a^{0,1}g}_{L^p(S_{xy})}+\Mod{a_0^{0,1}g}_{L^p(S_{xy})})+c\Mod
    g_{L^\infty(S_{xy})})
  \end{align*}
  There is an $L^\infty$ bound on $g$ from \eqref{eq:xibd} and the
  fact that $K$ is compact. Together with the bounds on $a$ and $a_0$
  (from \eqref{eq:a0bd} and \eqref{eq:a1bd}), this shows that
  $\Mod{g}_{W^{1,p}(S'_{xy})} \leq c$. By applying elliptic regularity
  again, we can show $\Mod{g}_{W^{2,p}(S''_{xy})} \leq c$. Since
  \eqref{eq:complexgroupiso} is a diffeomorphism,
  \begin{align}\label{eq:xiderivativebd}
    \Mod{\xi}_{W^{2,p}(S''_{xy})} \leq c.
  \end{align}
  By Lemma \ref{lem:gcactona}, we get
  $$\Mod{a_t-a_0}_{W^{1,p}(S''_{xy})}<c.$$ 
  Together with the bound on $a_0$, this uniformly bounds
  $\Mod{a_t}_{W^{1,p}(S_{xy})}$. By Sobolev embedding
  \begin{align}\label{eq:atC0bd}
    \Mod{a_t}_{C^0(S_{xy})}<c
  \end{align}
  for $0 \leq t \leq 1$.
 
  Consider the integral $\int_{\partial B_r}\lan
  \nabla_{A_t,\nu}\xi,\xi\ran_\k$. We partition the curve into
  segments, so that each segment lies in a single set
  $S''_{xy}$. Write $\nabla_{A_t,\nu}\xi=\ppnu \xi +
  [(a_t)_\nu,\xi]$. We have $C^0$ bounds on $\ppnu \xi$ and
  $(a_t)_\nu$ using \eqref{eq:xiderivativebd} and
  \eqref{eq:atC0bd}. Together with the asymptotic bound on $\xi$
  \eqref{eq:xibd}, the result of Step 3 is proved.

  \vskip .1in \noindent {\sc Step 4}: {\em Finishing the proof.}\\
  We have shown that the equation \eqref{eq:bypartsaffine} holds and
  the boundary term vanishes as $r \to \infty$. So,
  $$\ddt \int_\C\lan *F_{A_t,u_t},\xi \ran>0$$ 
  if $\xi \neq 0$. Since $F_{A_0,u_0}=F_{A_1,u_1}=0$, this implies
  that $\xi=0$, proving Proposition \ref{prop:uniqueaffinevort}.
\end{proof}

\begin{proof}{\rm (Proof of Proposition \ref{prop:toric})} We first consider
  the case of a line bundle $X=\C$ on which $G$ acts with weight $\nu
  \in \g^\dual$. Suppose $P$ is a principal bundle with $[P]=d$. Then the
  associated orbifold line bundle $P(X)$ has degree (pairing of the
  first Chern class with the rational fundamental class) $\lan \nu,d\ran
  \in \Q$. By our set-up $\lan \nu,d\ran =\frac m n$ for some $m \in
  \Z$. Up to isomorphism, there is a unique holomorphic line bundle on
  $\P(1,n)$ of this degree \cite[Theorem 5.1]{Smith}, this line bundle
  is described as: $L=(\tU_1 \times \C) \sqcup (U_2 \times \C)/\sim$
  where $\sim$ is as follows:
  \begin{align*}
    (z,l) &\sim (\sig_n z, e^{-2\pi im/n}l) \qquad (z,l) \in \tU_1 \times \C\\
    (z,l) &\sim (w,z^m l) \qquad 0 \neq z \in \tU_1, w \in U_2,
    w=\frac 1 {z^\fix}, \in \C
  \end{align*}
  In the framework of Section \ref{sec:orbibundle}, the transition
  functions $\mu:\tU_1 \to \C^\times$ and $\tau:\tU_1 \bs \{0\} \to
  \C^\times$ are holomorphic and are $\mu(z):=e^{-2\pi im/n}$ and
  $\tau(z)=z^m$.  Holomorphic sections on $u:\P(1,n) \to P(X)$ are of
  the following form:
$$U_2 \ni w \mapsto u_0 + u_1w+ \dots + u_qw^q, \quad \tU_1 \ni z
\mapsto z^m(u_0 + u_1 z^{-n} +\dots+ u_q z^{-qn}),$$ where $q=\lfloor
m/n \rfloor$. It is clear that $u(\infty)=0$ if $m/n$ is not an
integer, and otherwise $u(\infty) = u_q=u^{(q)}/q!$. For the case of a
vector bundle $X=\C^k$, the expression for $u_\infty$ can be derived
in a similar way. Using Theorem \ref{thm:main}, the section $u$ is
complex gauge equivalent to an affine vortex if and only if $u(\infty)
\in X^\ss$.

We next show when two such sections are complex gauge
equivalent. 
Consider a holomorphic complex gauge transformation $g \in \G(P)$
relating sections $u_1,u_2$. 
The complex gauge transformation $g$ induces an isomorphism on each line bundle $P \times_{\nu_i}\C$, which we call $\nu_i(g)$. Since the group is Abelian, $\nu_i(g)$ is a holomorphic map from $\P(1,n)$ to $\C^\times$, which is necessarily a constant. Since the weights $\nu_1,\dots,\nu_k$ span $\g^\dual$, we conclude $g:\P(1,n) \to G$ is a constant.
\end{proof}
\section{Asymptotic decay for vortices} \label{sec:orbdecay} 

Proposition \ref{prop:asym} is a consequence of a decay result for
vortices on a cylinder (Proposition \ref{prop:decaycyl}) and a result
about the limit behavior of $u$ as $z \to \infty$ (Proposition
\ref{prop:infusing}).

\begin{definition}{\rm (Energy density)} Let $U \subset \C$ be an open subset. Suppose $\lambda : U
  \to \R_{\geq 0}$ is a smooth function. Let $(A,u)$ be a vortex from
  the principal bundle $P \to U$ to $X$ with respect to the metric
  $\om=\lambda^2 ds \wedge dt$. The {\em energy density} of $(A,u)$ is
  $$e_{(A,u)}(z):=\hh \left(|F_A(z)|^2+|\d_Au(z)|^2+|\Phi(u(z))|^2\right)$$ 
  for any $z \in U$. The norms are defined as follows : for any form $\alpha
  \in \Om^*(U,\k)$, if $\alpha \wedge *_\lambda\alpha= f ds\wedge dt$,
  then $|\alpha(z)|^2=|f|^2$. Here, $*_\lambda$ denotes the Hodge star
  with respect to the metric $\lambda^2 ds\wedge dt$.
\end{definition}

\begin{definition}{\rm (Admissible metric)} Let $a>0$. On a half cylinder
  $\Sig:=\{(s,t):s \geq 0, t \in \R/a\Z\}$, the metric
  $\om_\Sig=\lambda^2ds \wedge dt$ is {\em admissible} if 
  \begin{align}\label{eq:admmetric}
    \lambda \geq \frac {2\pi} {a m_{\Phi^{-1}(0)}} \quad \sup_\Sig(\Delta(\lambda^{-2})+|d(\lambda^{-1})|^2) <2m^2_{\Phi^{-1}(0)}
  \end{align}
  where $m_{\Phi^{-1}(0)}=\inf_{x \in \Phi^{-1}(0)}|L_x|$, where $L_x$ is defined by \eqref{eq:defL}.
\end{definition}

\begin{proposition}\label{prop:decaycyl} {\rm(Decay for vortices on the half cylinder,
    \cite[Theorem 1.3]{Zilt:invaction})} Let
  $\Sig$ be a half cylinder $$\Sig:=\{(s,t):s \geq 0, t \in \R/a\Z\}$$
  with an admissible metric $\om_\Sig=\lambda^2ds \wedge dt$ where
  $\lambda = \lambda(s,t)$ is a positive function.  Suppose $G$ acts
  freely on $X^{\ss}$ and $(A,u)$ is a finite energy vortex from the
  trivial bundle $\Sig \times K$ to $X$ such that $\ol{u(\Sig)}$ is
  compact. Then, for every $\eps>0$, there is a constant $C$ such that
  \begin{align*}
    e_{(A,u)}(s+it) \leq C\lambda^{-2}e^{(-\frac {4\pi}a +\eps)s}.
  \end{align*}
\end{proposition}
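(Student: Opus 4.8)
Since this statement is Ziltener's (\cite{Zilt:invaction}, Theorem 1.3), the plan is to reconstruct the proof along the lines of that paper, splitting it into two essentially independent estimates that are combined at the end. Introduce the \emph{tail energy}
$$ E(s) := \int_{\{s' \geq s\}} e_{(A,u)} \, \om_\Sig , \qquad s \geq 0 , $$
so that finite energy means $E(0) < \infty$ and $E(s) \to 0$ as $s \to \infty$. First I would prove a \emph{mean value inequality} bounding the pointwise density $e_{(A,u)}(z)$ by the tail energy near $z$, and second an \emph{exponential decay} estimate for $E(s)$ with the sharp rate $\frac{4\pi}{a}$; combining the two gives the pointwise bound.

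For the mean value step, a Bochner--Weitzenb\"ock computation for vortices yields a differential inequality of the form $\Delta e_{(A,u)} \geq -c\, e_{(A,u)} - c\, e_{(A,u)}^2$ (with $\Delta$ the positive Laplacian of $\om_\Sig$). The compactness of $\ol{u(\Sig)}$ together with the finite-energy hypothesis rules out bubbling and gives a uniform $L^\infty$ bound on $e_{(A,u)}$, which lets me absorb the quadratic term; Heinz's mean-value trick then produces, for balls of fixed radius in the cylinder metric,
$$ e_{(A,u)}(z) \leq \frac{C}{\lambda^2} \int_{B_1(z)} e_{(A,u)} \, \om_\Sig \leq \frac{C'}{\lambda^2}\, E(\operatorname{Re}(z) - 1) . $$
The factor $\lambda^{-2}$ appearing in the conclusion of the proposition is exactly the normalization by the area of the unit ball measured in the metric $\lambda^2 \, ds \wedge dt$.

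The heart of the proof, and the main obstacle, is the energy decay of $E(s)$ with the \emph{sharp} constant. Here I would follow the invariant-symplectic-action method: to the restriction $\gamma_s := u|_{\{s\}\times S^1}$ (in a suitable gauge) associate its invariant symplectic action $\A(\gamma_s)$, and prove (i) an energy identity $E(s) \leq \A(\gamma_s) + (\text{lower order})$ obtained from Stokes' theorem and the vortex equation $*F_A + \Phi(u) = 0$, and (ii) an isoperimetric inequality $|\A(\gamma_s)| \leq \frac{a}{4\pi}(1+\eps) \int_{\{s\}\times S^1} e_{(A,u)} = -\frac{a}{4\pi}(1+\eps)\, E'(s)$ for loops lying near $\Phi^{-1}(0)$. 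Since $G$ acts \emph{freely} on $X^{\ss}$, one can gauge $(A,u)$ on each circle so that $u(s,\cdot)$ lands in a slice, expand in Fourier modes on $\R/a\Z$, and observe that the first nontrivial mode has frequency $\frac{2\pi}{a}$; the admissibility condition \eqref{eq:admmetric}, phrased in terms of $m_{\Phi^{-1}(0)} = \inf_{x}|L_x|$, is precisely what guarantees that the moment-map direction does not contribute a smaller spectral gap than $(2\pi/a)^2$. Getting this constant sharp and uniform in $s$ is the delicate part.

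Combining (i) and (ii) gives the differential inequality $E(s) \leq -\frac{a}{4\pi}(1+\eps)\, E'(s)$, and Gronwall's inequality yields $E(s) \leq E(0)\, e^{(-\frac{4\pi}{a} + \eps')s}$. Feeding this into the mean value inequality of the second paragraph produces
$$ e_{(A,u)}(s+it) \leq \frac{C}{\lambda^2}\, E(s-1) \leq \frac{C'}{\lambda^2}\, e^{(-\frac{4\pi}{a} + \eps)s} , $$
which is the claimed estimate after renaming the constant. I expect the isoperimetric inequality (ii) with the optimal rate to be the step requiring the most care, while the mean value inequality and the Gronwall conclusion are comparatively routine.
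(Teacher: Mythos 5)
The paper contains no proof of this proposition: it is imported verbatim from Ziltener \cite{Zilt:invaction} (Theorem 1.3 there) and used as a black box in the appendix, so the only thing to compare your attempt against is the cited source. Your outline --- tail energy $E(s)$, a Heinz-type mean value inequality, an energy--action identity combined with the sharp isoperimetric inequality for the invariant symplectic action, and a Gronwall argument --- is exactly the structure of Ziltener's proof, so your approach coincides with the one the paper relies on; the only caveat is that the genuinely hard ingredient, the sharp isoperimetric inequality with constant $\frac{a}{4\pi}(1+\eps)$ (Ziltener's Theorem 1.2), is asserted via a Fourier/spectral-gap heuristic rather than proved, as you yourself acknowledge.
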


\begin{proposition}\label{prop:infusing} Suppose $(A,u)$ is a finite
  energy vortex with bounded image on the half cylinder $\Sig$ (described in Proposition
  \ref{prop:decaycyl}). Let $\pi_G:X^{\ss} \to X \qu G$ denote the
  projection. Then, $\lim_{s \to \infty}\pi_G \circ u(s,t)$ exists. 
\end{proposition}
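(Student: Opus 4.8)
The plan is to prove the stronger statement that the gauge-invariant map $\bar u := \pi_G \circ u$ converges, uniformly in $t$, to a single point of $X \qu G$. First I would check that $\bar u$ is defined for all sufficiently large $s$: by Proposition \ref{prop:decaycyl} the energy density, and in particular $|\Phi(u)|$, decays exponentially, so $u(s+it)$ eventually lies in the open neighbourhood $\{|\Phi| < \eps\}$ of $\Phi^{-1}(0)$, which is contained in $X^{\ss} = G\Phi^{-1}(0)$. Fixing $s_0$ with $u(\{s \geq s_0\}) \subseteq X^{\ss}$, compactness of $\ol{u(\Sig)}$ puts the image in a fixed compact set $K_0 \subseteq X^{\ss}$ on which $\pi_G$ is a smooth submersion, so $\|d\pi_G\|$ is bounded there.

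The key point is that the derivative of $\bar u$ is controlled by the covariant derivative $\d_Au$ rather than the full derivative $du$. Writing $\d_Au = du + a_u$, the term $a_u$ takes values in the subspace $\k_u \subseteq \g_u = T_u(Gu)$ tangent to the $G$-orbit, and since $\pi_G$ is constant along $G$-orbits its differential annihilates this subspace, so $d\pi_G(a_u) = 0$. Consequently
$$ d\bar u = d\pi_G(du) = d\pi_G(\d_Au), \qquad |d\bar u| \leq \|d\pi_G\|_{C^0(K_0)}\,|\d_Au|. $$
Decomposing $\d_Au = \d_{A,s}u\, ds + \d_{A,t}u\, dt$ and comparing with the metric $\lambda^2(ds^2 + dt^2)$, the target-norms of the coordinate components satisfy $|\d_{A,s}u|_X^2 + |\d_{A,t}u|_X^2 \leq C\lambda^2|\d_Au|^2 \leq C\lambda^2 e_{(A,u)}$; here the factor $\lambda^2$ cancels the $\lambda^{-2}$ in the decay bound of Proposition \ref{prop:decaycyl}, so for $\eps$ small and a suitable constant $C$,
$$ |\d_{A,s}u(s+it)|_X + |\d_{A,t}u(s+it)|_X \leq C e^{(-\frac{2\pi}a + \eps)s}. $$

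Finally I would run a length estimate in the metric space $X \qu G$. For $s_0 \leq s_1 < s_2$ and fixed $t$,
$$ d_{X \qu G}\big(\bar u(s_1,t), \bar u(s_2,t)\big) \leq \int_{s_1}^{s_2} |\partial_s \bar u(s,t)|\, ds \leq C' \int_{s_1}^{\infty} e^{(-\frac{2\pi}a + \eps)s}\, ds \xrightarrow[s_1\to\infty]{} 0, $$
so $s \mapsto \bar u(s,t)$ is uniformly Cauchy and converges to some $\bar u_\infty(t)$; the same bound applied to the $t$-derivative gives $d_{X \qu G}(\bar u(s,t_1),\bar u(s,t_2)) \leq C' a\, e^{(-\frac{2\pi}a+\eps)s} \to 0$, so $\bar u_\infty$ is independent of $t$, and this common value is the required limit. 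The main obstacle is the gauge-invariance step identifying $d\bar u$ with $d\pi_G(\d_Au)$, together with making the length estimate rigorous when $X \qu G$ is only an orbifold; the latter I would handle by working in the local slices of Lemma \ref{lem:transfconv}, where $\bar u$ is recorded by the slice coordinate and the derivative bound above applies verbatim.
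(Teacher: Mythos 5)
There is a genuine gap, and it sits at the very first step: you invoke Proposition \ref{prop:decaycyl} to get exponential decay of the energy density, but that proposition assumes $G$ acts \emph{freely} on $X^{\ss}$, whereas Proposition \ref{prop:infusing} is stated and needed in the merely \emph{locally} free setting (finite stabilizers). This is not a technicality: the paper's own proof opens by identifying precisely this as the main difficulty (``the action of $G$ on $X^{\ss}$ is not free''), and in the paper's logical order the exponential decay statement for locally free actions (Proposition \ref{prop:asym}) is deduced \emph{from} Proposition \ref{prop:infusing} --- one first uses the existence of the limit $u_G(\infty)$ to choose a slice $S$ at a point $x$ over it, lifts $(A,u)$ to the $|G_x|$-cover $G\times S$ where the action \emph{is} free, and only then applies Proposition \ref{prop:decaycyl} to the lift. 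So your argument assumes a decay estimate that, in this setting, is only proved downstream of the statement you are proving; as written it is circular.

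What is actually available at this stage is much weaker: finite energy, together with the mean value inequality (Lemma 3.3 of \cite{Zilt:invaction}, whose proof works for locally free actions), which gives $\sup_t e_{(A,u)}(s+it)\to 0$ as $s\to\infty$ --- no exponential rate. That suffices for the parts of your argument that are fine on their own (eventually $u$ maps into $\{|\Phi|<\eps\}\subset X^{\ss}$, the identity $d(\pi_G\circ u)=d\pi_G(\d_Au)$ since $d\pi_G$ kills $a_u$, and the shrinking of the loops $u_G(\{s=s_1\})$), but it does \emph{not} give integrability of $|\partial_s(\pi_G\circ u)|$ along rays: finite energy $\int\int|\d_Au|^2<\infty$ alone does not control $\int|\partial_s \bar u|\,ds$, so the Cauchy/length estimate collapses. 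Bridging exactly this gap is the content of the isoperimetric-inequality and removal-of-singularities machinery (McDuff--Salamon, Theorem 4.4.1 and the proof of Theorem 4.1.2) that the paper applies to the holomorphic curve $u_G$ in $X\qu G$, working in orbifold uniformizing charts since $X\qu G$ has orbifold singularities: the isoperimetric inequality upgrades ``loop lengths tend to $0$'' plus finite energy to exponential decay of the tail energy of $u_G$, after which a length estimate like yours closes the argument. Your proposal needs this input (or an equivalent one) in place of the appeal to Proposition \ref{prop:decaycyl}.
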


\begin{proof}
  The proof uses a combination of
  results from Ziltener \cite{Zilt:invaction} and McDuff-Salamon \cite{MS} and extends them to the case of orbifold quotient/target. 
  For the mean value inequality, we use Lemma 3.3 in
  \cite{Zilt:invaction}. The proof of this result works when the action
  of $K$ on $\Phi^{-1}(0)$ has finite stabilizers. In the setting of
  Proposition \ref{prop:decaycyl}, this gives that there exists a number
  $s_0 \geq \hh$ such that for $z=(s,t)$ with $s \geq s_0$, 
  \begin{align*}
    e_{(A,u)}(z) \leq \frac {32} {\pi} E((A,u);B_\hh(z)).
  \end{align*} 
  Since $E((A,u); \Sig)$ is finite, the right side goes to $0$ as $s
  \to \infty$. So, for large enough $s_0$, $\Phi(u(s,t))$ is close
  enough to $0$ and so $u(s,t) \in X^{\ss}$. Furthermore, $u_G:=\pi_G \circ u$
  is well-defined and is a holomorphic map to $X \qu G$. Also,
  \begin{equation}
    \begin{split}
      \ell(u_G(\{s=s_1, t \in \R/a\Z\})) &\leq \int_0^a|du_G(s_1,t)|dt\\ &
      \leq \int_0^a|\d_Au(s_1,t)|dt \to 0 \quad \text{ as } s_1 \to \infty.
  \end{split}
  \end{equation}
  
  Now, we switch to working on $X \qu G$ to prove the result. For
  every $p \in X \qu G$, there is a neighborhood $U_p$ and a
  uniformizing chart $(V_p,G_p,\pi)$ such that $V_p \subset \C^N$,
  $G_p$ is a finite group acting on $U_p$ and $\pi:U_p \to V_p$
  induces a homeomorphism from $U_p/G_p$ to $V_p$ (see
  \cite{ChenRuan}). Each $U_p$ has a $G_p$-invariant symplectic form
  that descends to the symplectic form on $X \qu G$. The quotient $X\qu G$ is
  compact, so it can be covered by a finite number of such
  neighborhoods $U_1, \dots, U_k$. Fix a constant $\delta_0>0$ such
  that for any $p \in X \qu G$, $B_p(\delta_0) \subset U_i$
   for some $i \in
  \{1,\dots,k\}$. If the length of the loop $\gamma: S^1 \to X\qu G$
  is less than $\delta_0$, it can be lifted to the cover in some
  uniformizing chart and the isoperimetric inequality 
  \cite[Theorem 4.4.1]{MS} can be applied. The rest of the proof can be completed in
  the same way as the proof of the removal of singularities result for
  $J$-holomorphic curves in \cite[Theorem 4.1.2]{MS}. We need the
  second paragraph of the proof of \cite[Lemma 4.5.1]{MS} (this requires Stokes'
  theorem for orbifolds, which can be proved by passing to a cover
  locally using regularizing charts), followed by the proof of \cite[Theorem
  4.1.2]{MS}. Note that we don't require holomorphic extension of $u_G$
  over the singularity.
\end{proof}

Proposition \ref{prop:asym} now follows in a straightforward way.

\begin{proof}{\rm (Proof of Proposition \ref{prop:asym})}
  Map $\C \bs B_1$ to the half cylinder $\Sig$, set $a=2\pi$ and apply
  change of coordinates $\Sig \ni z \mapsto e^z \in \C \bs
  B_1$. Equip $\Sig$ with the pullback metric. By Proposition \ref{prop:infusing}, $u_G(\infty):=\lim_{s \to
    \infty}u_G(s,t)$ exists. Let $x \in \pi_G^{-1}(u_G(\infty))$, and
  let $S$ be a slice for the $G$-action at $x$. This means
  $G\times_{G_x}S \to X^{\ss}$ is a diffeomorphism onto its
  image. The map $\pi:G \times S \to X^{\ss}$ is a $|G_x|$-cover, equip $G
  \times S$ with the metric $\pi^*\om_X$. The left $K$-action is
  free and has moment map $\pi^*\Phi$.  The number $n$ divides $G_x$, so for some
  large $s_0$, $u(\Sig_{s>s_0})\subseteq GS$ and the map $u$ lifts to $\tilde u:
  \tSig_{s>s_0} \to G \times S$. Here $\tilde\Sig_{s>s_0}=\{(s,t):s
  \geq s_0, t \in \R/2\pi n\Z\}$ is an $n$-cover of
  $\Sig_{s>s_0}$ equipped with the pullback metric.
Now, Proposition \ref{prop:decaycyl} can be applied
  to the lift $(\tilde A, \tilde u): \tilde \Sig_{s\geq s_0} \to G \times S$,
  and this proves Proposition \ref{prop:asym}.
\end{proof} 

\begin{proof}{\rm (Proof of Proposition \ref{prop:infvortsing})} 
  We work in cylindrical co-ordinates on the complement of a ball. Map $\C \bs B_1$ to the half
  cylinder $\Sig=\{(s,t):s \geq 0, t \in \R/2\pi\Z\}$, with change of
    coordinates $\Sig \ni z \mapsto e^z \in \C \bs B_1$. The Euclidean
    metric on $\C \bs B_1$ pulls back to $\om_\Sig=e^{2s} \d s \wedge \d t$ on
    $\Sig$.  The connection $A$ can be put in radial gauge, so that on
    $\Sig$, $A=\onD+a(s,t) dt$. The proof now is exactly same as the
    proof of Proposition D.7 (B) in \cite{Zilt:thesis}. The only
    asymptotic result used in that proof is the following: for some $\delta>0$, 
    $$\sup_t(|\pps u(s,t)|+e^{\delta s}|\Phi(u(s,t))|) \leq e^{-\delta s}\quad
    \forall s \geq 0.$$
    In our case, by Proposition \ref{prop:asym} we have this result
    for $\delta=\frac 2 n -\eps$. The conclusion of Proposition D.7
    (B) consists of \eqref{eq:phiconndecay} in
    the form:
    \begin{equation*}
      \Mod{k_0^{-1}\partial_\theta k_0+a(r,\cdot)}_{L^p([0,2\pi],K)}<cr^{(-1+\frac 2 p +\frac \delta 2)}.
    \end{equation*}
    To obtain our result, we choose $\eps=2(\frac 1 n - 1 + \frac 2 p
    - \gamma)$.
\end{proof}

\section{Some analytic results}

In this section, we collect some analytic results used in the
proof. The first two results are standard. The following is
Proposition A.3.4 in \cite{MS}.

\begin{proposition} \label{prop:impfnMS} {\rm (Implicit function
    theorem)} Let $F: X \to Y$ be a differentiable map between Banach
  spaces. Suppose that $DF(0)$ is surjective and has a right inverse
  $Q$, with $\Mod{Q} \leq C$. For all $x \in B_\delta$,
  $\Mod{DF(x)-DF(0)}<\frac 1 {2C}$. If $\Mod{F(0)}<\frac \delta {4C}$,
  then $F(x)=0$ has a unique solution in $B_\delta$ satisfying $x \in \im Q$.
\end{proposition} 

\begin{proposition}{\rm (Sobolev multiplication)}\label{prop:sobmult} 
Let $\Sig$ be a compact $n$-dimensional manifold possibly with a smooth boundary. 
\begin{enumerate}
\item {\rm(\cite[Theorem 4.39]{Adams:sobspace})} Given $k \in \Z$ and $p>1$ be such that $kp>n$. Then, $W^{k,p}(\Sig)$ is a Banach algebra with respect to pointwise multiplication. There is a consant $c$ such that for any $f$, $g \in W^{k,p}(\Sig)$
  \begin{equation*}
    \Mod{fg}_{W^{k,p}} \leq c \Mod f_{W^{k,p}}\Mod g_{W^{k,p}}.
  \end{equation*}
\item {\rm(\cite[Theorem 9.5 (3)]{Palais})} Suppose $l \in \Z$, $k_i \in \Z_{\geq 0}$ and $q$, $p_i>1$, where $i=1, 2$. Suppose at least for one $i$, $k_ip_i<n$. Let $l \leq k_1, k_2$ and $l - \frac n q < \sum_{i=1,2}k_i - \frac n {p_i}$. Further, if $l<0$, then we assume $\sum_{i:k_ip_i<n}(\frac n {p_i}-k_i)<n$. Then, there is a consant $c$ such that for any $f \in W^{k_1,p_1}(\Sig)$, $g \in W^{k_2,p_2}(\Sig)$
  \begin{equation*}
    \Mod{fg}_{W^{l,q}} \leq c \Mod f_{W^{k_1,p_1}}\Mod g_{W^{k_2,p_2}}.
  \end{equation*}
\end{enumerate}
\end{proposition}
The following result is Theorem 1 in \cite{Do:bdry} applied to the
case where the base manifold is a Riemann surface.
\begin{theorem}{\rm (Donaldson \cite{Do:bdry}, Theorem 1)}\label{thm:YMbdry} Let $\Sig$ be a
  compact Riemann surface with boundary, and let $A$ be a $H^1$
  connection on the trivial bundle $\Sig \times K$. There is a unique
  $s \in H^2(\Sig,\k)$ satisfying $s|_{\partial \Sig} \equiv 0$ and
  $e^{is}A$ is a flat connection.
\end{theorem} 
\begin{lemma}{\rm (Action of $\G_\C$ on
    $\A$)} \label{lem:gcactona} Let $(\Sig,j_\Sig)$ be a compact
  Riemann surface,
  possibly with a smooth boundary and $P$ be a principal $K$-bundle. Let $k \in \Z_{\geq 0}$,
  $p>1$ be such that $(k+1)p>2$. Complex gauge transformations in
  $\G^{k+1,p}(P)$ act smoothly on the space of connections $\A^{k,p}(P)$.

  Let $A_0$ be a smooth connection on $P$. For
  any $\eps>0$, there is a constant $C$ so that the following is
  satisfied. For any $W^{k,p}$ connection $A=A_0+a$ which satisfies
  $\Mod{a}_{W^{k,p}(\Sig)}<\eps$ and any $\xi \in W^{k+1,p}(\Sig, P(\k))$
  that satisfies $\Mod{\xi}_{W^{k+1,p}}<1$, 
  \begin{equation}
    \label{eq:gcactbd}
    \Mod{(\exp i\xi)A - A}_{W^{k,p}(\Sig)} \leq C\Mod{\xi}_{W^{k+1,p}(\Sig)}.
  \end{equation}
\end{lemma}

\begin{proof} The action of unitary gauge transformations $\K^{k+1,p}$
  is smooth on $\A^{k,p}$ - this is a standard result, see \cite[Lemma A.5]{Weh:Uh}. To prove the
  same for $\G^{k+1,p}$, it is enough to look at complex gauge
  transformations of the form $e^{i\xi}$, where $\xi \in
  W^{k+1,p}(\Sig, P(\k))$. The infinitesimal action of $i\xi$ on a connection $A$
  is $\d_A\xi \circ j_\Sig$. Suppose $a_t:[0,1] \to
  W^{k,p}(\Sig,P(\k))$ is the solution of the equation
  \begin{equation} \label{ode} 
\frac{da_t}{dt}=(\d_{A+a_t}\xi) \circ j_\Sig =(\d_{A_0}\xi+[a_t, \xi]) \circ j_\Sig, \quad a_0=a.\end{equation}
  Then, $(\exp i\xi)A=A_0+a_1$. The map 
  $$W^{k,p}\times W^{k+1,p} \ni (a,\xi) \mapsto (\d_{A_0}\xi + [a,\xi])\circ j_\Sig \in
  W^{k,p}$$
is smooth. So, the equation \eqref{ode} has a
  solution $t \mapsto a_t$ on the time interval $[0,1]$. Further $a_1$
  varies smoothly with $a_0$ and $\xi$. This shows that $\G^{k+1,p}$
  has a smooth action on $\A^{k,p}$.

  Next, we prove the norm bound assuming
  $\Mod{\xi}_{W^{k+1,p}}<1$. Suppose $a_t \neq 0$ for all $t \in
  [0,1]$. Then, 
  \begin{equation}\label{eq:derivativeat}  
    \frac{d}{dt}\Mod{a_t}_{W^{k,p}} \leq
    \Mod{\frac{da_t}{dt}}_{W^{k,p}} \leq
    c(\Mod{\xi}_{W^{k+1,p}}+\Mod{a_t}_{W^{k,p}}\Mod \xi_{W^{k+1,p}}).
  \end{equation}
  We remark that $\frac{d}{dt}\Mod{a_t}_{W^{k,p}}$ is well-defined
  since $t \mapsto a_t$ is differentiable and $a_t$ is not identically
  zero for any $t \in [0,1]$. Since $\Mod{\xi}_{W^{k+1,p}}<1$, we have
  $\frac{d}{dt}\Mod{a_t}_{W^{k,p}} \leq c(1+\Mod{a}_{W^{k,p}})$. Then,
  $$\Mod{a_t}_{W^{k,p}} \leq e^{ct}(1+\Mod{a_0}_{W^{k,p}})-1 \leq
  C(\eps).$$

 Now, we use \eqref{eq:derivativeat} again and write $\frac{d}{dt}\Mod{a_t}_{W^{k,p}} \leq
  C\Mod{\xi}_{W^{k+1,p}}$. This proves the result if $a_t \neq 0$ for
  all $t \in [0,1]$. If that is not the case, we apply the procedure
  on the intervals $[0,t_0]$ and $[t_1,1]$, where $t_0$, $t_1$ are the
  smallest and largest time values for which $\Mod{a_t}\leq \eps$. This
  would produce the same bound.
\end{proof}

\begin{lemma}\label{lem:hausquot}{\rm(Converging connections are related by converging gauge transformations)} Let $\Sig \subset \C$ be the closure of a bounded open subset with smooth boundary and let $P:=\Sig \times K$ be the trivial principal bundle over $\Sig$. Suppose $p>2$ and $A_i$ is a sequence of connections converging to $A_\infty$ in $L^p$. Suppose furthermore that the connections in the sequence are gauge equivalent, i.e. there exist $k_i \in W^{1,p}(\Sig,K)$ such that $k_i(A_0)=A_i$.  Then, there exists $k_\infty \in W^{1,p}(\Sig,K)$ such that, after passing to a subsequence, $k_i \weakto k_\infty$ weakly in $W^{1,p}$. Further, $A_\infty=k_\infty(A_0)$ and so the limit is in the same gauge orbit as the sequence.
\end{lemma}
\begin{proof} This result is a slight variation of Lemma 4.3.5 in \cite{Venu}. The connections can be written as $A_i=\onD + a_i$, where $a_i \in \Om^1(\Sig,\k)$. Any compact Lie group injectively embeds into $U(N)$. After fixing such an embedding, we can use matrix representations for $a_i$, $k_i$ etc. The relation $k_iA_0=A_i$ can be re-written as
$$a_i=(dk_i)k_i^{-1}+k_ia_0k_i^{-1}.$$
Let $\Theta_i:=A_i-A_\infty$. Then,
\begin{align}\label{eq:gtb}
dk_i=-k_ia_0+a_\infty k_i +\Theta_ik_i.
\end{align}
The one-forms $A_0$, $A_\infty$ and $\Theta_i$ are bounded in $L^p$. We can assume that the norm on the space of $N \times N$ matrices is invariant under left and right multiplication by elements in $K$. Then, the right hand side of \eqref{eq:gtb} is bounded in $L^p$. Since $K$ is compact, $\Mod{k_i}_{W^{1,p}}<c$ for some constant $c$. After passing to a subsequence, $k_i \weakto k_\infty$ in $W^{1,p}$. So, the sequence $k_iA_0$, which is equal to the sequence $A_i$, converges to $k_\infty A_0$ weakly in $L^p$. The limit must be equal to the strong limit, which is $A_\infty$.
\end{proof}

%
\def\dbar{\leavevmode\hbox to 0pt{\hskip.2ex \accent"16\hss}d}
\providecommand{\bysame}{\leavevmode\hbox to3em{\hrulefill}\thinspace}
\providecommand{\MR}{\relax\ifhmode\unskip\space\fi MR }
\providecommand{\MRhref}[2]{%
  \href{http://www.ams.org/mathscinet-getitem?mr=#1}{#2}
}
\providecommand{\href}[2]{#2}

\end{document}